     \def\section{\@startsection{section}{1}%
     \z@{.7\linespacing\@plus\linespacing}{.5\linespacing}%
     {\bfseries
     \centering
     }}
     \def\@secnumfont{\bfseries}
\newcommand{\R}{\mathbb R}
\newcommand{\RR}{\mathbb R}
\newcommand{\E}{\mathbb E}
\newcommand{\1}{ 1}
\newcommand{\HH}{\mathfrak H}
\newtheorem{theorem}{Theorem}[section]
\newtheorem{lemma}[theorem]{Lemma}
\newtheorem{assumption}[theorem]{Assumption}
\newtheorem{proposition}[theorem]{Proposition}
\theoremstyle{definition}
\theoremstyle{remark}
\newtheorem{remark}{Remark}
\newtheorem{example}{Example}
\numberwithin{equation}{section}
\begin{document}
\title[Approximation for stochastic fractional heat equation]{Quantitative normal approximations for the stochastic fractional heat equation}

\author[O. Assaad]{Obayda Assaad}
\address{Universit\'e de Lille 1, UFR Math\'ematiques, France}
\email{obayda.assaad@univ-lille.fr}

\author[D. Nualart]{David Nualart} \thanks {D. Nualart is supported by NSF Grant DMS 1811181.}
\address{University of Kansas, Department of Mathematics, USA}
\email{nualart@ku.edu}

\author[C.A. Tudor]{Ciprian A. Tudor}
\address{Universit\'e de Lille 1, UFR Math\'ematiques, France}
\email{ciprian.tudor@univ-lille.fr}

\author[L. Viitasaari]{Lauri Viitasaari}
\address{Aalto University School of Business, Department of Information and Service Management, Finland}
\email{lauri.viitasaari@iki.fi}

\begin{abstract}
In this article we present a {\it quantitative} central limit theorem for  the     stochastic fractional heat  equation driven by a a general Gaussian multiplicative noise, including the cases of space-time white noise and the white-colored noise with spatial covariance given by the Riesz kernel or a bounded integrable function.     We   show that the   spatial average over a ball of radius $R$ converges, as $R$ tends to infinity, after suitable renormalization, towards a Gaussian limit in the total variation distance. We also provide a functional central limit theorem. As such, we extend recently proved similar results for  stochastic heat equation to the case of the fractional Laplacian and to the case of general noise. 
\end{abstract}

\maketitle

\medskip\noindent
{\bf Mathematics Subject Classifications (2010)}: 	60H15, 60H07, 60G15, 60F05.

\medskip\noindent
{\bf Keywords:} Stochastic fractional heat equation, fractional Laplacian, central limit theorem, Malliavin calculus, Stein's method. 

\allowdisplaybreaks

\section{Introduction}
In this article we consider the stochastic fractional heat equation
\begin{equation}
\label{1}
\frac{\partial u}{\partial t} (t,x)= - (-\Delta)^{\frac{\alpha}{2}} u(t,x) + \sigma (u(t,x)) \dot{W} (t,x), \hskip0.5cm t\geq 0, x \in \mathbb{R}^d
\end{equation}
with initial condition $u(0, x)\equiv 1$. Here $\sigma$ is assumed to be a Lipschitz continuous function with the property $\sigma(1)\neq 0$ and $- (-\Delta)^{\frac{\alpha}{2}}$ is the fractional Laplace operator.

Fractional Laplace operator can be viewed as a generalization of spatial derivatives and classical Sobolev spaces into fractional order derivatives and fractional Sobolev spaces, and together with the  associated equations it has numerous applications in different fields including   fluid dynamics, quantum mechanics, and finance to simply name a few. For detailed discussions and different equivalent formal definitions, see \cite{Garofalo} and the references therein.

In the present article we provide a general existence and uniqueness result to  equation \eqref{1} that covers many different choices of the (Gaussian) random perturbation $\dot{W}$. Moreover, we provide quantitative limit theorems in a general context. These results cover three different important situations: when $\dot{W}$ is a standard space-time white noise, when $\dot{W}$ is a white-colored noise, i.e. a Gaussian field that behaves as a Wiener process in time and it has a non-trivial spatial covariance given by the Riesz kernel of order $\beta <\min(\alpha,d)$, and when $\dot{W}$ is a white-colored noise with spatial covariance given by an integrable and bounded function $\gamma$. 

Our results continue the line of research initiated in \cite{HNV18} and \cite{HNVZ19} where a similar problem for the   stochastic heat equation on $\R$ (or $\R^d$, respectively) driven by a space-time white noise (or spatial covariance given by the Riesz kernel, respectively) was considered. As such, we extend the results presented in \cite{HNV18}  and \cite{HNVZ19} as the main theorems of \cite{HNV18} and \cite{HNVZ19}  can be recovered from ours by simply plugging in $\alpha=2$. Proof-wise our methods are similar to those of these two references. However, we stress that in our case we do not have fine properties of the heat kernel at our disposal, and hence one has to be more careful in the computations. In particular, our main contribution is the bound for the norm of the Malliavin derivative (cf. Proposition \ref{prop:malliavin-bound}) that differs from the classical Laplacian case. Moreover, we provide a general approach how such bounds can be achieved, based on the boundedness properties of the convolution operator with the spatial covariance $\gamma$ (see Proposition \ref{p1}) together with the semigroup property and some integrability of the Green kernel. We also remark that, while the existence of mild solutions to (\ref{1}) in the case of the space-time white noise is a known fact (see \cite{DD}), to the best of our knowledge there are no results that would provide the existence in our generalised framework. In the present article, we provide such a result under general, so-called fractional Dalang condition. 

On a related literature, we also mention \cite{DNZ18} studying the case of stochastic wave equation on $\R^d$. In this article, the driving noise was assumed to be Gaussian multiplicative noise that is white in time and colored in space such that the correlation in the space variable is described by the Riesz kernel. As such, our results complements the above mentioned works studying the stochastic heat and wave equation.
 
The rest of the paper is organised as follows. In Section \ref{sec:results} we describe and discuss our main results. In particular, we provide the existence and uniqueness result for the solution, and provide quantitative central limit theorems for the spatial average in the mentioned particular cases. In Section \ref{sec:prel} we recall some preliminaries, including some basic facts on Stein's method and Malliavin calculus that are used to prove our results, together with some basic facts on the Green kernel related to the fractional heat equation, and a key inequality proved in Proposition \ref{p1}. Proofs of our main results are provided in Section \ref{sec:existence-proof} and Section \ref{sec:clt-proofs}.

\section{Main results}
\label{sec:results}
In this section we introduce and discuss our main results concerning  equation \eqref{1}.
Throughout the article, we assume that $\dot{W}$ is a centered  Gaussian noise with a covariance given by
\begin{equation}
\label{eq:cov-general}
\E [\dot{W}(t,x)\dot{W}(s,y)] = \delta_0(t-s)\gamma(x-y),
\end{equation}
where $\delta_0$ denotes the Dirac delta function and $\gamma$ is a nonnegative and nonnegative definite symmetric measure. The spectral measure $\widehat{\gamma}(d\xi)$ is defined through the Fourier transform of the measure $\gamma$:
$$
\widehat{\gamma}(d\xi)= \left(\mathcal{F}\gamma\right)(d\xi) := \int_{\mathbb{R}^d} e^{-i\langle \xi,y\rangle}d\gamma(y).
$$
 The existence of the solution to \eqref{1} is guaranteed if a fractional version \eqref{eq:dalang} of  Dalang's condition is satisfied. In particular, this is the case on all examples mentioned in the introduction. 

We next introduce the Green kernel (or fundamental solution) associated to the operator $- (-\Delta)^{\frac{\alpha}{2}}$, where $\alpha \in (0,2]$. This kernel, denoted in the sequel by $G_{\alpha}$, is defined via its Fourier transform 
\begin{equation}
\label{FG}
\left( \mathcal{F} G_{\alpha}(t, \cdot)\right) (\xi) =e ^ {-t\vert \xi\vert ^ {\alpha} }, \hskip0.5cm \xi \in \mathbb{R}^d, t\geq 0
\end{equation}
for $\alpha>0$ (here and in the sequel, $\vert \cdot \vert$ denotes the Euclidean norm). While explicit formulas for $G_\alpha(t,x)$ are known only in the special cases $\alpha = 1$ (the Poisson kernel) and $\alpha=2$ (the heat kernel), the kernel $G_\alpha(t,x)$ admits many desirable properties. Some of them that are suitable for our purposes are recorded in Section \ref{sec:prel}. 

Similarly to the classical stochastic heat equation case, the solution to the stochastic equation \eqref{1} can be expressed in terms of $G_{\alpha}$. That is, the mild solution is a measurable random field $\left( u(t,x), t\geq 0, x \in \mathbb{R}^d \right)$ which satisfies 
\begin{equation}
\label{mild}
u(t,x) =1+ \int_{0} ^ {t} \int_{\mathbb{R}^d} G_{\alpha} (t-s, x-y) \sigma (u(s, y)) W (ds, dy),
\end{equation}
where the stochastic integral is understood in the Walsh sense \cite{Walsh}. The following existence and uniqueness result can be regarded as our first main theorem.
\begin{theorem}\label{t4.2}
Suppose that the Fourier transform $\widehat\gamma = \mathcal{F} \gamma$ satisfies the fractional Dalang's condition:
\begin{equation}
\label{eq:dalang}
\int_{\RR^d} \frac{\widehat{\gamma}(d\xi)}{\beta+|\xi|^\alpha} < \infty,
\end{equation}
for some (and hence for all)  $\beta>0$.
Then  equation (\ref{1}) admits a unique mild solution given by \eqref{mild}.
\end{theorem}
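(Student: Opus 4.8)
The plan is to run the classical Picard iteration scheme for the mild formulation \eqref{mild}, working in the space of adapted random fields with finite, uniformly (in $x$) bounded $p$-th moments that grow at most exponentially in $t$. First I would set up the stochastic convolution: for an adapted random field $v$ with $\sup_{x}\E|v(s,x)|^p$ locally bounded, define $(\Phi v)(t,x) = 1 + \int_0^t\int_{\R^d} G_\alpha(t-s,x-y)\sigma(v(s,y))\,W(ds,dy)$. By the Walsh isometry together with the Burkholder--Davis--Gundy inequality, the $p$-th moment of the stochastic integral is controlled by
\begin{equation*}
\E\big|(\Phi v)(t,x)-1\big|^p \le C_p\left(\int_0^t\!\!\int_{\R^d}\!\!\int_{\R^d} G_\alpha(t-s,x-y)G_\alpha(t-s,x-z)\gamma(y-z)\,\big\|\sigma(v(s,y))\big\|_p\big\|\sigma(v(s,z))\big\|_p\,dy\,dz\,ds\right)^{p/2},
\end{equation*}
and after using the Lipschitz (hence linear-growth) bound on $\sigma$ and $\sup_x$, the key quantity to estimate is $\int_0^t \left(\int_{\R^d}\int_{\R^d} G_\alpha(s,y)G_\alpha(s,z)\gamma(y-z)\,dy\,dz\right)\,ds$ up to the factor carrying $\sup_x\|v(s,x)\|_p^2$ inside.

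The second step is to identify that spatial integral via Fourier analysis. Since $(\mathcal F G_\alpha(s,\cdot))(\xi)=e^{-s|\xi|^\alpha}$ by \eqref{FG}, Plancherel's theorem gives
\begin{equation*}
\int_{\R^d}\int_{\R^d} G_\alpha(s,y)G_\alpha(s,z)\gamma(y-z)\,dy\,dz = \frac{1}{(2\pi)^d}\int_{\R^d} e^{-2s|\xi|^\alpha}\,\widehat\gamma(d\xi).
\end{equation*}
Integrating in $s$ over $[0,t]$ and applying Tonelli, $\int_0^t e^{-2s|\xi|^\alpha}\,ds \le \min\!\big(t,\tfrac{1}{2|\xi|^\alpha}\big) \le \frac{C(1+t)}{1+|\xi|^\alpha}$, so the whole expression is bounded by $C(1+t)\int_{\R^d}\frac{\widehat\gamma(d\xi)}{1+|\xi|^\alpha}$, which is finite precisely by the fractional Dalang condition \eqref{eq:dalang} (with $\beta=1$). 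This is the place where the hypothesis enters, and it is really the only structural input needed beyond soft arguments. Feeding this back, one gets $\sup_x\E|(\Phi v)(t,x)|^p \le C + C\int_0^t (\text{kernel factor})\,\sup_x\E|v(s,x)|^p\,ds$ after a further application of Minkowski/Hölder to pull the $p/2$ power through.

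The third step is the standard contraction-and-Gronwall argument: define $u_0(t,x)\equiv 1$ and $u_{n+1}=\Phi u_n$; show by induction that $M_n(t):=\sup_x\E|u_n(t,x)|^p$ satisfies a uniform-in-$n$ exponential-in-$t$ bound via a Gronwall-type lemma adapted to the kernel weight; then estimate $\sup_x\E|u_{n+1}(t,x)-u_n(t,x)|^p$ in terms of $\int_0^t(\text{kernel})\sup_x\E|u_n(s,x)-u_{n-1}(s,x)|^p\,ds$ using the Lipschitz property of $\sigma$, and iterate to conclude that $(u_n)$ is Cauchy in $L^p(\Omega)$ uniformly in $(t,x)$ on compacts; the limit $u$ solves \eqref{mild}. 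Uniqueness follows by the same Gronwall argument applied to the difference of two solutions, and joint measurability/adaptedness is inherited along the iteration in the usual way. The main obstacle, and the one point where the fractional setting genuinely differs from the classical $\alpha=2$ case, is that we cannot invoke explicit Gaussian heat-kernel bounds on $G_\alpha$; the resolution is to bypass pointwise kernel estimates entirely and work on the Fourier side as in step two, where $G_\alpha$ has a clean closed form — the rest of the argument is then essentially routine. I would also note that the classical Dirac-delta-in-time covariance makes the time integral a genuine (non-iterated) integral, so no extra care beyond the BDG step is needed there.
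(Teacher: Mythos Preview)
Your proposal is correct and matches the paper's approach: Picard iteration, the BDG inequality, Fourier-side identification of the spatial kernel integral $I(s)=\int_{\R^d} e^{-2s|\xi|^\alpha}\,\widehat\gamma(d\xi)$, and Dalang's condition for its integrability in $s$. The paper fills in your ``Gronwall-type lemma adapted to the kernel weight'' by proving, via Laplace transforms, that the iterated convolutions $h_n(t)=\iota\int_0^t h_{n-1}(s)I(t-s)\,ds$ (with $h_0\equiv 1$) satisfy $\sum_n h_n(t)^{1/p}<\infty$ uniformly on $[0,T]$---one chooses $\beta$ large so that $\iota\int_0^\infty e^{-\beta s}I(s)\,ds\le\tfrac12$---which is exactly the uniform-in-$n$, exponential-in-$t$ control you anticipated.
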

\begin{remark}
We present our results only in the case of the initial condition $u(0, x)\equiv 1$ which makes our presentation and notation easier. We stress however, that with little bit extra effort our results could be extended to cover more general initial conditions. Indeed, our existence result can be generalised to cover even the cases of initial conditions given by measures (satisfying certain suitable conditions), by following the lines of \cite{CK}. Similarly, in the spirit of \cite[Corollary 3.3]{HNVZ19}, our approximation results can be generalised to the case of $u(0,x) = f(x)$ with suitable assumptions on $f$, once a comparison principle is established.
\end{remark}

Throughout the article, for a function $f$ and a (signed) measure $\mu$ we denote by $f\ast \mu$ the convolution defined by 
\begin{equation}
\label{eq:convolution}
(f \ast \mu)(y) = \int_{\RR^d} f(y-x)d\mu(x),
\end{equation}
provided it exists. If $\mu$ is absolutely continuous, then $d\mu(x) = \mu(x)dx$ for some function $\mu$ and we recover the classical convolution for integrable functions
$$
(f \ast \mu)(y) = \int_{\RR^d} f(y-x)\mu(x)dx.
$$
If $\mu$ can be viewed as a function, the well-known Young convolution inequality states that for $\frac{1}{p} + \frac{1}{q} = \frac{1}{r}+1$ with $1\leq p, q\leq r\leq \infty$, we have
\begin{equation}
\label{eq:young}
\Vert f\ast \mu \Vert_{L^{r}(\RR^d)} \leq \Vert f\Vert_{L^{p}(\RR^d)} \Vert \mu \Vert_{L^{q}(\RR^d)}.
\end{equation}
In particular, this gives us, for any $p\geq 1$, 
\begin{equation}
\label{eq:young-lp-l1}
\Vert f\ast \mu \Vert_{L^{p}(\RR^d)} \leq \Vert f\Vert_{L^{p}(\RR^d)} \Vert \mu \Vert_{L^{1}(\RR^d)}.
\end{equation}
More generally, if $\mu$ is a finite measure, a simple mollification argument shows that \eqref{eq:young-lp-l1} remains valid with $\Vert \mu \Vert_{L^{1}(\RR^d)}$ replaced by $\mu(\RR^d)$ (see, e.g. \cite[Proposition 3.9.9]{Bogachev}). Finally, by $I_{d-\beta}$ we denote the Riesz potential defined by, for $0<\beta < d$, 
$$
(I_{d-\beta}f)(x) = \int_{\RR^d} f(y)|x-y|^{-\beta}dy = (K_{d-\beta} \ast f)(x),
$$
where $K_{d-\beta}(y) = |y|^{-\beta}$. More generally, Riesz potential $I_{d-\beta} \mu$ with respect to a measure $\mu$ is defined through the convolution 
$$
\left(I_{d-\beta} \mu\right)(x) = (K_{d-\beta} \ast \mu)(x) = \int_{\RR^d} |x-y|^{-\beta}d\mu(y).
$$
In order to simplify our notation, we also define $I_{d-\beta}$ for $\beta=d$ simply as an identity operator. 

We also provide approximation results for the spatial average over an Euclidean ball of radius $R$, denoted by $B_R$. 
For these purposes we require some more refined information on the covariance $\gamma$ instead of the general condition \eqref{eq:dalang}.
\begin{assumption}
\label{assumption:main}
 We assume that $\gamma$ is given by 
the Riesz potential $\gamma = I_{d-\beta} \mu$, where $0<\beta\leq d$ and $\mu$ is a finite symmetric measure. 
Moreover, one of the following conditions holds:
\begin{itemize}
\item[(i)]  $\beta < \alpha \wedge d$.
\item[(ii)] $\beta =d=1$ and $\alpha >1$.
\item[(iii)] $\beta=d \ge \alpha$ and $\mu=\gamma$ is absolutely continuous, i.e. $d\gamma(x)= \gamma(x)dx$, with $\gamma \in L^r(\R^d)$ for some $r>\frac d\alpha$. In addition, if $r>2$, we impose Dalang's condition  \eqref{eq:dalang}.
\end{itemize}
\end{assumption}

\begin{remark}
Condition $\beta <\alpha $ in  Case (i) implies that Dalang's condition  \eqref{eq:dalang} is satisfied. Indeed,
 we recall that a Fourier transform $\widehat{\mu}$ of a finite measure $\mu$ is a bounded continuous function. 
Consequently, by recalling the convolution theorem $\widehat{f \ast \mu} = \widehat{f}\widehat{\mu}$ and the fact that the Riesz potential $I_{d-\beta}$ is a Fourier multiplier, we obtain 
\begin{equation}
\label{eq:fourier-gamma}
\widehat{\gamma}(d\xi) =c_{d,\beta} |\xi|^{\beta-d}\widehat\mu(\xi)d\xi,
\end{equation}
from which we deduce \eqref{eq:dalang}. Dalang's condition  \eqref{eq:dalang}  clearly holds in Case (ii). 
Finally, in Case (iii) we can deduce  \eqref{eq:dalang} from the Hausdorff-Young inequality if $r\le 2$.
\end{remark}
\begin{remark}
By carefully examining our proof one can see that our results remains valid provided that $\gamma = I_{d-\beta}\mu$ satisfies Dalang's condition and the statement in Proposition \ref{p1} holds for suitable number $2q$. 
\end{remark}
Case (ii) covers the case of the space-time white noise, where $\gamma$ is given by the Dirac delta $\gamma(y) = \delta_0(y)$. 
The case  $\gamma(y) = |y|^{-\beta}$  corresponds to the noise with spatial correlation given by the Riesz kernel, studied in the heat equation case $\alpha =2$ in \cite{HNVZ19}. In our terminology, this is included in Case (i) where   $\gamma = I_{d-\beta}\delta_0$.

Recall that the total variation distance between random variables (or associated probability distributions) is given by 
 \begin{equation}
 d_{\rm TV}(F, Z) :  = \sup\Big\{ P(F\in A) -  P(Z\in A) \,:\, A \subset \R  \,\,\,\, \text{Borel sets} \Big\}.  \label{TV_def}  
 \end{equation}
Our first main results concern  the following two quantitative central limit theorems for the spatial average of the solution. 
\begin{theorem}\label{t1.2}
Let $\gamma$ satisfy Assumption \ref{assumption:main} and let $u$ be the solution to the stochastic fractional heat equation (\ref{1}). Then for every $t>0$ there exists a constant $C$, depending solely on $t$, $\alpha$, $\sigma$, and the covariance $\gamma$, such that 
\[
d_{\rm TV}\left(   \frac{1}{\sigma_R} \int_ {B_R} \big[ u(t,x) - 1 \big]\,dx, ~Z\right) \leq CR^{-\frac{\beta}{2}}\,,
\]
where $Z\sim N(0,1)$ is  a standard normal random variable, and $\sigma_R^2 = {\rm Var} \big( \int_ {B_R} [ u(t,x) - 1 ]\,dx \big) \sim R^{2d-\beta}$,
as $R\to \infty$.
\end{theorem}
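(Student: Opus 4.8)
\medskip\noindent\textbf{Sketch of the argument.} The proof runs via the Malliavin--Stein method. Write $F_R=\int_{B_R}\big(u(t,x)-1\big)\,dx$. From the mild formulation \eqref{mild} and the chain rule for the Skorohod integral, $u(t,x)-1=\delta(V_{t,x})$ with $V_{t,x}(s,y)=\mathbf 1_{[0,t]}(s)\,G_\alpha(t-s,x-y)\,\sigma(u(s,y))$, so that $F_R=\delta(v_R)$ with $v_R(s,y)=\mathbf 1_{[0,t]}(s)\,\sigma(u(s,y))\int_{B_R}G_\alpha(t-s,x-y)\,dx$. Since $\E[F_R]=0$, the general Stein--Malliavin inequality recalled in Section \ref{sec:prel} gives
\[
d_{\rm TV}\Big(\tfrac{F_R}{\sigma_R},\,Z\Big)\ \le\ \frac{2}{\sigma_R^{2}}\sqrt{{\rm Var}\big(\langle DF_R,v_R\rangle_{\HH}\big)}.
\]
Hence it suffices to establish: (a) the variance bounds $c\,R^{2d-\beta}\le\sigma_R^{2}={\rm Var}(F_R)\le C\,R^{2d-\beta}$ (together with the precise asymptotic $\sigma_R^2\sim c_\infty R^{2d-\beta}$ claimed in the statement); and (b) the estimate ${\rm Var}\big(\langle DF_R,v_R\rangle_{\HH}\big)\le C\,R^{4d-3\beta}$. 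Combining (a) and (b) produces exactly the rate $R^{-\beta/2}$.

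For (a) I would expand ${\rm Var}(F_R)=\int_{B_R}\int_{B_R}\rho_t(x-y)\,dx\,dy$ with $\rho_t(x-y):={\rm Cov}(u(t,x),u(t,y))$ (well defined by translation invariance) and analyse $\rho_t$ via its chaos expansion. The first-chaos term equals $\sigma(1)^2\langle G_\alpha(t-\cdot,x-\cdot),G_\alpha(t-\cdot,y-\cdot)\rangle_{\HH}$; using the semigroup identity behind \eqref{FG} and the Fourier formula \eqref{eq:fourier-gamma} for $\widehat\gamma$, this decays like $|x-y|^{-\beta}$ at infinity when $\beta<d$ (and is integrable when $\beta=d$), so that its double integral over $B_R\times B_R$ is $\asymp R^{2d-\beta}$; by orthogonality of chaoses this already gives the lower bound, and $\sigma(1)\neq 0$ is precisely what makes it nontrivial. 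The higher-chaos contributions, and hence the matching upper bound, are controlled by Proposition \ref{prop:malliavin-bound} together with Proposition \ref{p1}; the convergence to $c_\infty$ follows by dominated convergence using the same kernel estimates.

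The core is (b). Differentiating \eqref{mild}, $D_{s,y}u(t,x)=G_\alpha(t-s,x-y)\sigma(u(s,y))+\int_s^t\!\int_{\R^d}G_\alpha(t-r,x-z)\sigma'(u(r,z))D_{s,y}u(r,z)\,W(dr,dz)$; integrating over $x\in B_R$ and pairing with $v_R$ splits $\langle DF_R,v_R\rangle_{\HH}=T_1+T_2$, where $T_1=\|v_R\|_{\HH}^2$ comes from the first term, $T_2$ from the Walsh integral, and $\E[T_1+T_2]=\sigma_R^2$. Then ${\rm Var}(\langle DF_R,v_R\rangle_{\HH})\le 2\,{\rm Var}(T_1)+2\,{\rm Var}(T_2)$. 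I would bound ${\rm Var}(T_1)$ by the Gaussian Poincar\'e inequality: $D_{r,z}T_1$ brings down $\sigma'(u)D_{r,z}u$ by the chain rule, and $\E\|DT_1\|_{\HH}^2$ is then estimated by replacing each Malliavin derivative by $G_\alpha$ via Proposition \ref{prop:malliavin-bound}, each product $G_\alpha\ast G_\alpha$ by $G_\alpha$ via the semigroup property, and each spatial pairing against $\gamma$ by an $L^1$--$L^\infty$ Young-type bound furnished by Proposition \ref{p1}; a careful accounting of the spatial integrations over balls of radius $R$, the two convolutions against $\gamma$, and the time integrals then yields precisely $R^{4d-3\beta}$. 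The term ${\rm Var}(T_2)$ is treated the same way after expanding the Walsh integral, invoking in addition the analogue of Proposition \ref{prop:malliavin-bound} for the second derivative, $\|D_{r,z}D_{s,y}u(t,x)\|_p\le C\big[G_\alpha(t-r,x-z)G_\alpha(r-s,z-y)+G_\alpha(t-s,x-z)G_\alpha(s-r,z-y)\big]$; the $R$-counting again closes at $R^{4d-3\beta}$.

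The step I expect to be the main obstacle is exactly this bookkeeping in (b): in contrast to the classical case $\alpha=2$ of \cite{HNV18,HNVZ19} we have no explicit Gaussian pointwise bound on $G_\alpha$ at our disposal (indeed $G_\alpha$ has only polynomial tails), so every such use must be replaced by the semigroup identity $G_\alpha(t,\cdot)\ast G_\alpha(s,\cdot)=G_\alpha(t+s,\cdot)$, the scaling $G_\alpha(t,x)=t^{-d/\alpha}G_\alpha(1,t^{-1/\alpha}x)$, the integrability properties of $G_\alpha$ recorded in Section \ref{sec:prel}, and the $L^q$ mapping property of $f\mapsto\gamma\ast f$ from Proposition \ref{p1}. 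The three regimes (i)--(iii) of Assumption \ref{assumption:main} are handled uniformly, the only difference being the choice of H\"older exponents fed into Proposition \ref{p1}, and, in case (iii) with $r>2$, an appeal to the fractional Dalang condition \eqref{eq:dalang} to make the time integrals near $s=t$ converge; this is precisely where the constraint relating $\beta$ to $\alpha$ enters. Verifying that the powers of $R$ coincide in all three regimes and that all time singularities at $s=t$ are integrable is the delicate part; the rest is a routine application of the Malliavin--Stein scheme.
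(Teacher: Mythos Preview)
Your overall Malliavin--Stein framework, the representation $F_R=\delta(v_R)$, and the splitting $\langle DF_R,v_R\rangle_\HH=T_1+T_2$ coming from the equation for $D_{s,y}u(t,x)$ are exactly what the paper does. The treatment of $T_1$ via Poincar\'e, bounding $D_{r,z}[\sigma(u(s,y))\sigma(u(s,y'))]$ with the chain rule and Proposition~\ref{prop:malliavin-bound}, is also the same and produces the paper's term $A_1$.

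The main divergence is in ${\rm Var}(T_2)$. You propose Poincar\'e again, which would force you to control $D_{r,z}D_{s,y}u(t,x)$; the paper avoids this entirely. After Fubini, $T_2$ is a Walsh integral in the outer variables $(r,w)$, so the paper simply uses $\E[T_2^2]$ and the isometry/BDG inequality \eqref{burk2}: the resulting integrand contains $D_{s,y}u(r,w)$, which is bounded by the \emph{first}-derivative estimate of Proposition~\ref{prop:malliavin-bound}. This yields the paper's term $A_2$ directly and is strictly cheaper than your route. Your stated second-derivative bound is also not right as written: by the same mechanism that produces Proposition~\ref{prop:malliavin-bound}, any such bound must carry the power $G_\alpha^{1/(2q)}$ and the time singularity $(t-s)^{-\kappa/2}$, not bare $G_\alpha$; and you would have to prove it, since no second-order analogue of Lemma~\ref{ll2} is available here.

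Two further differences in the bookkeeping are worth noting. First, ``replace $G_\alpha\ast G_\alpha$ by $G_\alpha$ via the semigroup property'' is not what is actually used in the $R$-counting: the factors that appear are $G_\alpha^{1/(2q)}$, for which there is no semigroup identity. For $\beta<d$ the paper instead uses the Riesz-type estimate of Lemma~\ref{2s-1}, $\int G_\alpha(t,x-y)|y|^{-\beta}dy\le C|x|^{-\beta}$, together with Lemma~\ref{lma:2q-integral}; for $\beta=d$ it uses crude $\varphi_R\le 1$ bounds and Lemma~\ref{lma:2q-integral}. Second, for the variance asymptotics $\sigma_R^2\sim R^{2d-\beta}$ the paper does not go through a chaos expansion but works on the Fourier side (Lemma~\ref{pro:correct-limit}) and, for $\beta<d$, uses that $\Psi(s,z)\to\theta_\alpha^2(s)$ as $|z|\to\infty$ (Lemma~\ref{sup}); your chaos/lower-bound sketch is plausible but is a different computation from what is carried out.
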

\begin{remark}
While we have stated our result concerning only a ball $B_R$ centered at the origin, we stress that with exactly the same arguments, one can replace $B_R$ with some other body $RA_0 = \{Ra : a \in A_0\}$. This affects only the normalization constants. Moreover, as in the heat case (cf. \cite[Remark 3]{HNVZ19}), one can allow the center of the ball $a_R$ to vary in $R$ as well. This fact follows easily from the stationarity. 
\end{remark}
Following the spirit of the mentioned references, we also provide functional version of Theorem \ref{t1.2}. 

\begin{theorem}  \label{t1.4}
Let $\gamma$ satisfy Assumption \ref{assumption:main} and let $u$ be the solution to the stochastic fractional heat equation (\ref{1}). Then
$$
\left\{  R^{\frac{\beta}{2}-d}\int_{B_R} \big[ u(t,x) -1 \big] \,dx   \right\}_{t\in [0,T]} \Rightarrow \left\{ \int_0^t  \varrho(s)  dY_s\right\}_{t\in [0,T]} \,,
$$
as $R\to \infty$, 
where $Y$ is a standard Brownian motion, the weak convergence takes place on the space of continuous functions $C([0,T])$, and $\varrho(s)$ is given by;
\begin{itemize}
\item If $\beta < d$, then $\varrho(s) = \sqrt{\mu\left(\RR^d\right)\int_{B_1^2}|x-x'|^{-\beta}dxdx'}\E[\sigma(u(s,y))]$.
\item If $\beta =d$, then $\varrho(s) = \sqrt{|B_1|\int_{\RR^d} \E\left[\sigma(u(s,0))\sigma(u(s,z))\right]d\mu(z)}$.
\end{itemize}
\end{theorem}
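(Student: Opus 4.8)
The plan is to establish the functional CLT in two stages, following the template of \cite{HNVZ19}: first prove convergence of finite-dimensional distributions, then establish tightness in $C([0,T])$. For the finite-dimensional convergence, set
\[
F_R(t) := R^{\frac{\beta}{2}-d}\int_{B_R}\big[u(t,x)-1\big]\,dx.
\]
Using the mild formulation \eqref{mild} and the stochastic Fubini theorem, one writes $F_R(t)$ as a Walsh integral of the random kernel $R^{\frac{\beta}{2}-d}\int_{B_R}G_\alpha(t-s,x-y)\,\sigma(u(s,y))\,dx$ against $W(ds,dy)$, i.e.\ $F_R(t)=\delta(v_{R,t})$ for an explicit adapted integrand $v_{R,t}$. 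The target process $\int_0^t\varrho(s)\,dY_s$ is itself a Gaussian martingale with bracket $\int_0^t\varrho(s)^2\,ds$, so by the martingale CLT (or the multivariate Stein/Malliavin criterion, cf.\ the tools recalled in Section~\ref{sec:prel}) it suffices to show that for any finite collection $0\le t_1<\cdots<t_k\le T$ and reals $a_1,\dots,a_k$, the quadratic variation-type quantity
\[
\Big\langle D\Big(\textstyle\sum_j a_j F_R(t_j)\Big),\, -DL^{-1}\Big(\textstyle\sum_j a_j F_R(t_j)\Big)\Big\rangle_{\mathfrak H}
\]
converges in $L^2$ (equivalently $L^1$) to the deterministic limit $\sum_{i,j}a_ia_j\int_0^{t_i\wedge t_j}\varrho(s)^2\,ds$, and that the covariances $\E[F_R(t_i)F_R(t_j)]$ converge to the same bilinear form. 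The covariance computation is a direct asymptotic analysis of
\[
R^{\beta-2d}\int_0^{t_i\wedge t_j}\!\!\int_{B_R^2}\big(G_\alpha(t_i-s,\cdot)\ast\gamma\ast G_\alpha(t_j-s,\cdot)\big)(x-x')\,\E[\sigma(u(s,y))\sigma(u(s,y'))]\cdots
\]
which, after the change of variables $x\mapsto Rx$, $x'\mapsto Rx'$ and exploiting the fact that $G_\alpha(r,\cdot)$ integrates to $1$ and concentrates (in the relevant scaling) so that the rescaled kernels act like a Dirac mass, isolates the two regimes $\beta<d$ and $\beta=d$ and produces exactly the stated $\varrho(s)$. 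This step reuses the Green-kernel integrability facts and Proposition~\ref{p1}, and is essentially the one-time-point computation behind $\sigma_R^2\sim R^{2d-\beta}$ in Theorem~\ref{t1.2}, tracked along two time arguments.

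For the Malliavin-derivative/contraction term, the key input is the bound on $\|D u(s,y)\|$ from Proposition~\ref{prop:malliavin-bound}: writing $Du_{r,z}(t,x)$ for the Malliavin derivative of the solution, one has $D_{r,z}F_R(t)=R^{\frac{\beta}{2}-d}\int_{B_R}G_\alpha(t-r,x-z)\sigma(u(r,z))\,dx+R^{\frac{\beta}{2}-d}\int_{B_R}\int_r^t\int G_\alpha(t-s,x-y)\Sigma(s,y)D_{r,z}u(s,y)\,W(ds,dy)\,dx$ with $\|\Sigma\|_\infty\le\mathrm{Lip}(\sigma)$. The first (dominant) term, when paired with the analogous quantity and integrated against $\gamma$ in space and against $dr$ in time, reproduces the limiting bracket; the second term is shown to be negligible by the same moment estimates and the fractional Dalang condition \eqref{eq:dalang}, exactly as the remainder terms were controlled in the proof of Theorem~\ref{t1.2}. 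For tightness, I would apply the Kolmogorov–Chentsov criterion: it suffices to prove a moment bound of the form
\[
\E\big[\,|F_R(t)-F_R(s)|^p\,\big]\le C\,|t-s|^{\eta p}\qquad(0\le s<t\le T)
\]
for some $p>1/\eta$, uniformly in $R$. Decomposing $F_R(t)-F_R(s)$ into the contribution of the time increment of $G_\alpha$ on $[0,s]$ and the fresh noise on $[s,t]$, applying the Burkholder–Davis–Gundy inequality for Walsh integrals together with hypercontractivity (the increments live in a fixed-order Wiener chaos structure via Malliavin calculus), and using the $L^p(\R^d)$-continuity in time of $G_\alpha$ (a consequence of the semigroup property $G_\alpha(t,\cdot)=G_\alpha(t-s,\cdot)\ast G_\alpha(s,\cdot)$ and the Green-kernel estimates from Section~\ref{sec:prel}), yields the desired Hölder-type bound with exponent depending on $\alpha,\beta,d$.

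The main obstacle I anticipate is the precise asymptotic evaluation of the rescaled covariance kernel in the borderline case $\beta=d$ (which includes the space-time white noise case $\beta=d=1$): here one cannot simply treat $G_\alpha(r,\cdot)\ast\gamma\ast G_\alpha(r',\cdot)$ as converging to a point mass in a naive way, and one must instead carefully use the semigroup property to write the double Green convolution as $G_\alpha(t_i+t_j-2s,\cdot)\ast\gamma$, control the spatial integral $\int_{\R^d}\E[\sigma(u(s,0))\sigma(u(s,z))]\,d\mu(z)$ (finite thanks to the moment bounds on $u$ and the Dalang condition when $r>2$ in Assumption~\ref{assumption:main}(iii)), and verify that the $R$-dependent error terms coming from the boundary of $B_R$ vanish — this is exactly where the integrability hypothesis $\gamma\in L^r$ with $r>d/\alpha$ and Proposition~\ref{p1} are indispensable. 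The analogous subtlety in the $\beta<d$ case is milder because there the scaling produces the genuinely nonlocal limit constant $\int_{B_1^2}|x-x'|^{-\beta}\,dx\,dx'$ and the $\sigma$-dependent factor decouples into $\E[\sigma(u(s,y))]$ by a stationarity-in-space argument for the solution.
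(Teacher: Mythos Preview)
Your overall strategy---finite-dimensional convergence via the Malliavin--Stein/Skorokhod representation plus Kolmogorov tightness---is exactly the paper's route, and the paper indeed just refers back to the arguments of Theorem~\ref{t1.2} and \cite{HNVZ19} for the finite-dimensional part while isolating tightness in a separate moment estimate (Proposition~\ref{pro:tightness2}). Two points deserve correction, however.

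First, your appeal to hypercontractivity is unjustified: the increments $F_R(t)-F_R(s)$ do \emph{not} live in a fixed Wiener chaos, because $\sigma$ is an arbitrary Lipschitz function and the Picard iteration populates every chaos. The paper obtains the $L^p$ tightness bound directly from the Burkholder--Davis--Gundy inequality~\eqref{burk2}, without any chaos reduction; after BDG one is left with a deterministic quantity $K_R(t,s)$ which is handled by passing to Fourier variables and using the Bessel-function representation of $|\widehat{\mathbf 1_{B_R}}|^2$ together with the elementary bound $|e^{-(t-r)|\xi|^\alpha}-e^{-(s-r)|\xi|^\alpha}|^2\le C(t-s)$. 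This gives the clean estimate $\E|F_R(t)-F_R(s)|^p\le C(t-s)^{p/2}$ with no extra H\"older exponent to chase.

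Second, for the covariance asymptotics you sketch a real-space change-of-variables argument and worry about the borderline case $\beta=d$. The paper's route is more uniform: it computes $\E[G_R(t)G_R(r)]$ in Fourier variables (Lemma~\ref{pro:correct-limit}, Theorem~\ref{pp2}), using that $|\widehat{\varphi_R}(t-s,\xi)|^2=(2\pi R)^d|\xi|^{-d}J_{d/2}^2(R|\xi|)e^{-2(t-s)|\xi|^\alpha}$ and that $\widehat\gamma(d\xi)=c_{d,\beta}|\xi|^{\beta-d}\widehat\mu(\xi)\,d\xi$. For $\beta=d$ the trick is simply to absorb the bounded function $\Psi(s,\cdot)$ into the finite measure $\mu$, i.e.\ to view $\tilde\gamma_s(dy)=\Psi(s,y)\,\mu(dy)$ as another finite measure and rerun the same Bessel computation; no semigroup manipulation or boundary-of-$B_R$ analysis is needed. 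Your decoupling of the $\sigma$-factor into $\E[\sigma(u(s,y))]$ in the $\beta<d$ case is correct and is exactly Lemma~\ref{sup} in the paper.
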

\begin{remark}
We prove later (see Lemma \ref{lemma:nu-okay}) that
$$
\int_{\RR^d} \E\left[\sigma(u(s,0))\sigma(u(s,z))\right]d\mu(z) \geq \left[\E[\sigma(u(s,y))]\right]^2.
$$
Under our initial condition $u(0,x) \equiv 1$, we may hence apply the arguments of \cite[Lemma 3.4]{DNZ18} to see the equivalence
$$
\sigma(1)=0 \Leftrightarrow \sigma_R = 0,\forall R>0 \Leftrightarrow \sigma_R=0 \mbox{ for some }R>0 \Leftrightarrow \lim_{R\to \infty} \sigma_R^2 R^{\beta-2d} = 0.
$$
Hence $\sigma(1)\neq 0$ is a natural condition that guarantees $\sigma_R>0$ for all $R>0$. 
Note also that $\sigma(1)\neq 0$ is necessary to exclude the trivial solution $u(t,x)\equiv 1$ by using the Picard iteration.
\end{remark}
\begin{example}
\item Suppose $\mu = \delta_0$ and let $\beta= d = 1$ and $\alpha>1$. This case corresponds to the space-time white noise, and now 
$$
\varrho(s) = \sqrt{|B_1|\int_{\RR^d}\E\left[\sigma(u(s,0))\sigma(u(s,z))\right]d\mu(z)} = \sqrt{2\E\sigma^2(u(s,0))}.
$$
In the case $\alpha=2$, we thus recover the results of \cite{HNV18}.
\end{example}
\begin{example}
Suppose $\beta<d$ and let $\mu = \delta_0$. This case corresponds to the white-colored case with the spatial covariance given by the Riesz kernel. Now
$$
\varrho(s) = \sqrt{\int_{B_1^2}|x-x'|^{-\beta}dxdx'}\E[\sigma(u(s,y))]
 $$
 and consequently, for $\alpha = 2$ we obtain the results of \cite{HNVZ19}.
\end{example}
\begin{remark}
We emphasis that the additional parameters associated to the fractional operator (i.e. $\alpha$) does not affect the above results, except for the constant quantities through the solution $u$. Indeed, the renormalization rate and the total variation distance are, up to multiplicative constants, the same as in the case $\alpha=2$ corresponding to the classical stochastic heat equation. 
\end{remark}

\section{Preliminaries}\label{sec:prel}
In this section we present some preliminaries that are required for the proofs of our main theorems. In particular, we recall some facts on Malliavin calculus and Stein's method together with some basic properties of the fractional Green kernel. Finally, in Proposition \ref{p1} we present a basic inequality that allows us to derive a bound for the Malliavin derivative.
\subsection{Malliavin calculus and Stein's method} We  start by introducing the Gaussian noise that governs the stochastic  fractional heat equation  (\ref{1}). 

Denote by $C_{c}^{\infty}\left( [0, \infty) \times \mathbb{R}^d\right)$ the class of $C^{\infty}$ functions on $ [0, \infty) \times \mathbb{R}^d$ with compact support.  We consider a Gaussian family of centered random variables  
\[
\left( W(\varphi), \varphi \in C^{\infty}_{c} \left( [0, \infty) \times \mathbb{R}^d\right)\right)
\]
on some complete probability space $\left(\Omega, \mathcal{F}, P\right) $ such that 
\begin{equation}
\label{cov-col}
\E [W(\varphi) W (\psi) ]= \int_{0} ^{\infty} \int_{\mathbb{R}^d} \int_{\mathbb{R}^d}\varphi(s, y) \psi (s, y') \gamma(y-y')dy dy'ds := \langle \varphi, \psi \rangle _{\HH}.
\end{equation}
We stress again that, in general, $\gamma$ is not a function, and hence \eqref{cov-col} should be understood as 
\begin{equation}
\label{cov-col-general}
\E [W(\varphi) W (\psi) ]= \int_{0} ^{\infty} \int_{\mathbb{R}^d} \varphi(s, y) \left[\psi(s,\cdot) \ast \gamma\right](y) dy ds.
\end{equation}

We denote by $\HH $ the Hilbert space defined as the closure of $C_{c}^{\infty}\left( [0, \infty) \times \mathbb{R}^d\right)$ with respect to the inner product (\ref{cov-col}). By density, we obtain an isonormal process $(W(\varphi), \varphi \in \HH )$, which consists of  a Gaussian family of centered random variable such that, for every $\varphi, \psi \in \HH$,
\begin{equation*}
\E[W(\varphi) W (\Psi) ]= \langle \varphi, \psi \rangle _{\HH}.
\end{equation*}
The Gaussian family $(W(\varphi), \varphi \in \HH )$ is usually called a white-colored noise because it behaves as a Wiener process with respect to the time variable $t\in [0, \infty)$ and it has a spatial covariance given by   the measure $\gamma$. 

Let us introduce the filtration associated to the random noise $W$. For $t>0$, we denote by $\mathcal{F}_{t}$ the sigma-algebra  generated by the random variables $W(\varphi)$, with $\varphi\in \HH$ having its support   included in the set $[0, t]\times \mathbb{R}^d$. For every random field $(X(s,y), s\geq 0, y\in \mathbb{R}^d)$, jointly measurable  and adapted  with respect to the filtration $\left( \mathcal{F}_{t}\right) _{t\geq 0} $,  satisfying 
\begin{equation*}
\E\left[ \Vert X\Vert _{\HH} ^{2}\right] <\infty,
\end{equation*}
we can define stochastic integrals with respect to $W$ of the form 
\begin{equation*}
\int_{0} ^{\infty} \int_{\mathbb{R}^d} X(s, y) W (ds, dy)
\end{equation*}
in the sense of Dalang-Walsh (see \cite{Da} and \cite{Walsh}). This integral satisfies the It\^o-type isometry
\begin{equation}\label{inte}
\E \left[\left( \int_{0} ^{\infty} \int_{\mathbb{R}^d} X(s, y) W (ds, dy)\right) ^{2} \right]= \E\left[ \Vert X\Vert _{\HH} ^{2}\right]. 
\end{equation}
The Dalang-Walsh integral also satisfies the following version of the Burkholder-Davis-Gundy inequality: for any $t\geq 0$ and $p\geq 2$,
\begin{eqnarray}
&&\left| \left|  \int_{0} ^{\infty} \int_{\mathbb{R}^d} X(s, y) W (ds, dy)\right|\right|  _{p} ^{2}\nonumber \\
&&\leq c_{p} \int_{0} ^{t} \int_{\mathbb{R}^d}\int_{\mathbb{R}^d }\Vert X(s, y) X(s, y')\Vert  _{\frac{p}{2}} \gamma(y-y')dydy'ds. \label{burk2}
\end{eqnarray}

Leu us next describe the basic tools from Malliavin calculus needed in this work. We introduce
$C_p^{\infty}(\RR^n)$ as the space of smooth functions with all their partial derivatives having at most polynomial growth at infinity, and $\mathcal{S}$ as the space of simple random variables of the form 
\begin{equation*}
F = f(W(h_1), \dots, W(h_n)),
\end{equation*}
where $f\in C_p^{\infty}(\RR^n)$ and $h_i \in \HH$, $1\leq i \leq n$. Then the Malliavin derivative $DF$ is defined as $\HH$-valued random variable
\begin{equation}
DF=\sum_{i=1}^n  \frac {\partial f} {\partial x_i} (W(h_1), \dots, W(h_n)) h_i\,.
\end{equation}
For any $p\geq 1$, the operator $D$  is closable as an operator  from $L^p(\Omega)$ into $L^p(\Omega;  \HH)$. Then $\mathbb{D}^{1,p}$ is defined as the completion of $\mathcal{S}$ with respect to the norm
\begin{equation*}
\|F\|_{1,p} = \left(\E [|F|^p] +   \E(  \|D F\|^p_\HH )   \right)^{1/p}\,.
\end{equation*}
The adjoint operator $\delta$ of the derivative is defined through the duality formula
\begin{equation}\label{eq: duality formula}
\E (\delta(u) F) = \E( \langle u, DF \rangle_\HH),
\end{equation}
valid for any $F \in \mathbb{D}^{1,2}$ and any $u\in {\rm Dom} \, \delta \subset L^2(\Omega; \HH) $. The operator $\delta$ is
also called the Skorokhod integral since, in the case of the standard Brownian motion,
it coincides
with an extension of the It\^o integral introduced by Skorokhod (see, e.g. \cite{GT, NuPa}). 
In our context, any   adapted random field $X$ which is jointly measurable and satisfies (\ref{inte}) belongs to the domain of $\delta$, and $\delta (X)$ coincides with the Walsh integral:
\[
\delta (X) = 
\int_0^\infty \int_{\R} X(s,y) W(d s, d y).
\]
This allows us to represent the solution $u(t,x)$ to \eqref{1} as a Skorokhod integral. 

\medskip

The proofs of our main results are based on  
 Malliavin-Stein approach, introduced by Nourdin and Peccati in \cite{NP09} (see also the book \cite{NP}). In particular, we apply the following result to obtain rate of convergence in the total variation distance (see \cite{Zhou} and also \cite{HNV18,Eulalia}). 

\begin{proposition}  \label{lem:dist}
If $F$ is a centered  random  variable in the Sobolev space $\mathbb{D}^{1,2}$ with unit variance such that  $F = \delta(v)$ for  some $\HH$-valued random variable $v$ belonging to the domain of $\delta$,     then,  with $Z \sim N (0,1)$,
\begin{equation}\label{eq:dist}
d_{\rm TV}(F, Z) \leq 2 \sqrt{{\rm Var} \left(  \langle DF, v\rangle_\HH  \right) }.
\end{equation}
\end{proposition}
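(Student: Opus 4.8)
\textbf{Proof proposal for Proposition \ref{lem:dist}.}

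The plan is to combine the classical Stein bound for the total variation distance with the integration-by-parts (duality) formula, exactly as in the Nourdin--Peccati approach. First I would recall the Stein characterization of the normal law: for $Z\sim N(0,1)$,
\[
d_{\rm TV}(F,Z)\leq \sup_{\|f\|_\infty\leq\sqrt{2/\pi},\ \|f'\|_\infty\leq 2}\big|\E[f'(F)-Ff(F)]\big|,
\]
so it suffices to estimate $\E[f'(F)-Ff(F)]$ uniformly over all $f$ in the Stein class, using that a solution $f_A$ of the Stein equation $f'(x)-xf(x)=\mathbf{1}_A(x)-P(Z\in A)$ satisfies $\|f_A\|_\infty\leq\sqrt{\pi/2}$ and $\|f_A'\|_\infty\leq 2$.

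Next I would exploit the representation $F=\delta(v)$ together with the duality formula \eqref{eq: duality formula}. Since $f$ is Lipschitz and $F\in\mathbb{D}^{1,2}$, the chain rule gives $f(F)\in\mathbb{D}^{1,2}$ with $D(f(F))=f'(F)DF$ (first for $C^1$ functions with bounded derivative, then by approximation for general $f$ in the Stein class), and therefore
\[
\E[Ff(F)]=\E[\delta(v)f(F)]=\E\big[\langle v,D(f(F))\rangle_\HH\big]=\E\big[f'(F)\langle v,DF\rangle_\HH\big].
\]
Substituting this into the Stein bound yields
\[
d_{\rm TV}(F,Z)\leq 2\,\E\big|1-\langle DF,v\rangle_\HH\big|,
\]
because $|f'|\leq 2$. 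Finally, since $F$ has unit variance, the duality formula applied with $f(x)=x$ gives $\E[\langle DF,v\rangle_\HH]=\E[F\delta(v)]=\E[F^2]=1$, so $1-\langle DF,v\rangle_\HH$ is a centered random variable; applying the Cauchy--Schwarz inequality,
\[
\E\big|1-\langle DF,v\rangle_\HH\big|\leq\sqrt{\E\big[(1-\langle DF,v\rangle_\HH)^2\big]}=\sqrt{{\rm Var}\big(\langle DF,v\rangle_\HH\big)},
\]
which produces exactly \eqref{eq:dist}.

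The only genuinely delicate point is the justification of the chain rule $D(f(F))=f'(F)DF$ and of the identity $\E[Ff(F)]=\E[f'(F)\langle v,DF\rangle_\HH]$ when $f$ is merely Lipschitz (the Stein solutions $f_A$ are Lipschitz but not $C^1$). This is handled by a standard mollification argument: approximate $f_A$ by $C^1$ functions with uniformly bounded first derivatives, pass to the limit in $L^2(\Omega)$ using that $F\in\mathbb{D}^{1,2}$ and that $v\in{\rm Dom}\,\delta$, and invoke the closability of $D$. Everything else is a routine chain of Cauchy--Schwarz and the Stein bound, so no serious obstacle remains once this approximation step is in place.
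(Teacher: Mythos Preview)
The paper does not actually prove Proposition~\ref{lem:dist}; it merely states the result and cites \cite{Zhou} (and also \cite{HNV18,Eulalia}) for the proof. Your argument is correct and is exactly the standard Stein--Malliavin derivation found in those references: Stein's bound for $d_{\rm TV}$, the duality formula applied to $F=\delta(v)$ together with the chain rule, the observation that $\E[\langle DF,v\rangle_\HH]=1$, and Cauchy--Schwarz. Your remark on the mollification needed to justify the chain rule for the (merely Lipschitz) Stein solutions is the only nontrivial technical point, and you handle it appropriately.
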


\subsection{On fractional Green kernel}
We recall some useful properties of the kernel $G_{\alpha}$ defined through \eqref{FG}. For details, we refer to \cite{DD,CD,CHN}.

\begin{enumerate}

\item For every $t>0$,  $G_{\alpha}(t, \cdot) $ is the density of a $d$-dimensional  L\'evy stable process at time $t$. In particular, we have
\begin{equation}\label{density}
\int_{\mathbb{R}^d} G_{\alpha}(t,x)dx=1.
\end{equation}

\item For every $t$, the kernel $G_{\alpha}(t,x)$ is real valued, positive, and symmetric in $x$.

\item The operator $G_{\alpha} $ satisfies the semigroup property, i.e. 

\begin{equation}
\label{semig}
G_{\alpha} (t+s, x)= \int_{\mathbb{R}^{d}} G_{\alpha} (t, z ) G_{\alpha} (s, x-z) dz
\end{equation}
for $0<s<t$ and $x\in \mathbb{R}^d$.

\item $G_{\alpha}$ is infinitely differentiable with respect to $x$, with all the derivatives bounded and converging to zero as $\vert x\vert \to \infty$. Moreover, we have the scaling property 
\begin{equation}
\label{scale}
G_{\alpha}(t,x)= t^ {-\frac{d}{\alpha} } G_{\alpha}(1, t ^ {-\frac{1}{\alpha}}x).
\end{equation}

\item There exist two constants $0<K_{\alpha}' < K_{\alpha} $  such that 
\begin{equation}
\label{29m-1}
K' _{\alpha} \frac{1}{\left(1+ \vert x\vert \right)^ {d+\alpha}}\leq \left| G_{\alpha} (1,x)\right| \leq  K_{\alpha} \frac{1}{ \left(1+ \vert x\vert\right) ^ {d+\alpha}}
\end{equation}
for all $x\in \mathbb{R}^d$. Together with the scaling property, this further translates into
\begin{equation}  \label{Eq1}
K' _{\alpha} \frac{t^{-\frac{d}{\alpha}}}{\left(1+ \vert t^{-\frac{1}{\alpha}}x\vert \right)^ {d+\alpha}}\leq \left| G_{\alpha} (t,x)\right| \leq  K_{\alpha} \frac{t^{-\frac{d}{\alpha}}}{ \left(1+ \vert t^{-\frac{1}{\alpha}}x\vert\right) ^ {d+\alpha}}.
\end{equation}
\end{enumerate}

\subsection{A basic inequality}
The following proposition contains an inequality that plays a fundamental role in the proof of the estimates of the $p$-norm of the Malliavin derivative.

\begin{proposition} \label{p1}
Suppose that the covariance $\gamma$ satisfies Assumption \ref{assumption:main}.
Then,  there exists a number $2q \in \left(1,\frac{2d}{2d-\alpha} \wedge \frac{d+\alpha}{d}\right)$ such that for any functions $f,g \in L^{2q}(\mathbb{R}^d)$ we have 
\begin{equation}
\label{eq:key-convolution-bound}
\int_{\mathbb{R}^d} f(y) \left[g \ast \gamma\right](y)dy \leq C \Vert f\Vert_{L^{2q}(\mathbb{R}^d)}\Vert g\Vert_{L^{2q}(\mathbb{R}^d)}.
\end{equation}
 \end{proposition}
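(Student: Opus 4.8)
The strategy is to reduce the bilinear form to convolutions against the two pieces of $\gamma = I_{d-\beta}\mu$, namely the Riesz kernel $K_{d-\beta}(y) = |y|^{-\beta}$ and the finite measure $\mu$, and then invoke the Hardy--Littlewood--Sobolev (fractional integration) inequality together with Young's inequality \eqref{eq:young-lp-l1} in the generalized form valid for finite measures. Writing the left-hand side as $\int_{\R^d} f(y)\,[(g \ast K_{d-\beta}) \ast \mu](y)\,dy$ (legitimate by associativity of convolution and Fubini, which I would justify using nonnegativity since $f,g$ may be taken nonnegative without loss of generality, as the statement concerns an upper bound and in the application $f,g$ will be absolute values), I would estimate in three moves. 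First, by Hölder's inequality the integral is bounded by $\Vert f\Vert_{L^{2q}} \Vert (g \ast K_{d-\beta}) \ast \mu \Vert_{L^{(2q)'}}$ where $(2q)'$ is the conjugate exponent. Second, by \eqref{eq:young-lp-l1} applied to the finite measure $\mu$, $\Vert (g \ast K_{d-\beta}) \ast \mu \Vert_{L^{(2q)'}} \le \mu(\R^d)\,\Vert g \ast K_{d-\beta}\Vert_{L^{(2q)'}}$. Third, the Hardy--Littlewood--Sobolev inequality gives $\Vert g \ast K_{d-\beta}\Vert_{L^{(2q)'}} \le C\Vert g\Vert_{L^{2q}}$ provided the exponents satisfy the scaling relation $\frac{1}{2q} - \frac{1}{(2q)'} = \frac{\beta}{d} - 1 + 1 = 1 - \frac{d-\beta}{d}$, i.e. $\frac{1}{2q} + \frac{1}{2q} - 1 = 1 - \frac{\beta}{d}$ wait—more precisely HLS with kernel $|y|^{-\beta}$ maps $L^p \to L^{p^*}$ with $\frac{1}{p^*} = \frac{1}{p} - \frac{d-\beta}{d}$, and I need $p^* = (2q)'$, hence $1 - \frac{1}{2q} = \frac{1}{2q} - \frac{d-\beta}{d}$, giving $\frac{1}{q} = 2 - \frac{d-\beta}{d} = 1 + \frac{\beta}{d}$, i.e. $q = \frac{d}{d+\beta}$, equivalently $2q = \frac{2d}{d+\beta}$. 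One must then check $2q > 1$ (clear since $\beta \le d$) and $2q < \frac{2d}{2d-\alpha} \wedge \frac{d+\alpha}{d}$, which is where the hypothesis $\beta < \alpha$ (in Case (i)) enters: $\frac{2d}{d+\beta} < \frac{2d}{2d-\alpha}$ iff $d + \beta > 2d - \alpha$ iff $\beta > d - \alpha$, hmm, that is the wrong direction, so actually one needs $\beta < \alpha$ to be combined differently—I would instead verify $\frac{2d}{d+\beta} < \frac{d+\alpha}{d}$, which reads $2d^2 < (d+\alpha)(d+\beta)$, and since $\beta < \alpha$ this is implied by $2d^2 < (d+\alpha)^2$... which needs $\alpha > (\sqrt 2 - 1)d$, not automatic, so the bound $2q$ may need to be chosen strictly smaller than the HLS-critical value, using that $L^{2q'}$-norms on the relevant functions are finite for a small range and interpolating; the precise admissible window requires care.

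\textbf{Handling the three cases separately.} For Case (ii), $\beta = d = 1$, the measure $\mu$ is still finite and $K_{d-\beta} = K_0 = 1$ (constant), so $g \ast K_0$ is not well-behaved pointwise; instead I would interpret $I_{d-\beta}$ with $\beta = d$ as the identity (as the paper explicitly does), so $\gamma = \mu$ directly, and then \eqref{eq:key-convolution-bound} becomes $\int f\,(g\ast\mu)\,dy \le \Vert f\Vert_{L^{2q}}\Vert g \ast\mu\Vert_{L^{(2q)'}} \le \mu(\R^d)\Vert f\Vert_{L^{2q}}\Vert g\Vert_{L^{(2q)'}}$, and since here $2q < \frac{d+\alpha}{d} = 1 + \alpha$ with $\alpha > 1$ one can choose $2q$ in $(1,2)$ so that $(2q)' = 2q$ is impossible—rather $(2q)' > 2$—hmm, so one needs $\Vert g\Vert_{L^{(2q)'}} \le \Vert g\Vert_{L^{2q}}$ which fails on $\R^d$; instead I would symmetrize, applying Young as $\Vert g\ast\mu\Vert_{L^{2q}} \le \mu(\R^d)\Vert g\Vert_{L^{2q}}$ and then Hölder pairs $L^{2q}$ with $L^{(2q)'}$—so the bilinear bound as stated with the \emph{same} exponent $2q$ on both $f$ and $g$ forces $2q = 2$, and indeed $2 \in (1, \frac{2d}{2d-\alpha}\wedge\frac{d+\alpha}{d})$ precisely when $\alpha > 1$ and $\alpha > 1$ i.e. $\frac{2d}{2d-\alpha} > 2 \Leftrightarrow \alpha > d$, which with $d=1$ is $\alpha > 1$; so Case (ii) just uses the classical Young inequality with $2q = 2$. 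For Case (iii), $\beta = d \ge \alpha$ and $\gamma \in L^r$ with $r > d/\alpha$, I would again write $\int f\,(g\ast\gamma) \le \Vert f\Vert_{L^{2q}}\Vert g\ast\gamma\Vert_{L^{(2q)'}}$ and apply Young's inequality \eqref{eq:young} in the form $\Vert g\ast\gamma\Vert_{L^{(2q)'}} \le \Vert g\Vert_{L^{2q}}\Vert\gamma\Vert_{L^r}$ with $\frac{1}{2q} + \frac{1}{r} = \frac{1}{(2q)'} + 1 = 2 - \frac{1}{2q} + ... $—solving $\frac{2}{2q} = 2 - \frac 1r$ gives $\frac{1}{q} = 2 - \frac 1r$, i.e. $2q = \frac{2r}{2r-1}$, and I must check this lies in the required interval: $2q > 1$ is automatic; $2q < \frac{d+\alpha}{d}$ reads $\frac{2r}{2r-1} < 1 + \frac\alpha d$, i.e. $\frac{1}{2r-1} < \frac\alpha d$, i.e. $2r - 1 > d/\alpha$, i.e. $r > \frac 12(1 + d/\alpha)$, which is implied by $r > d/\alpha$ since $d/\alpha \ge 1$; similarly $2q < \frac{2d}{2d-\alpha}$ reduces to an inequality checkable from $r > d/\alpha$. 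The role of the additional Dalang condition imposed when $r > 2$ is to guarantee existence of the solution, not the inequality itself.

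\textbf{Main obstacle.} The genuine difficulty is purely bookkeeping with exponents: one must exhibit a \emph{single} number $2q$ lying in the open interval $\left(1, \frac{2d}{2d-\alpha}\wedge\frac{d+\alpha}{d}\right)$ that \emph{simultaneously} makes the Hölder/Young/HLS chain close with the same $L^{2q}$ norm appearing on both $f$ and $g$. In Case (i) the HLS-critical exponent is $2q = \frac{2d}{d+\beta}$, but this may fall outside the prescribed window unless one either uses the strict inequality $\beta < \alpha$ to gain room or replaces HLS by the weak-type $(1,1)$/interpolation version to slide $2q$ slightly; verifying that the window is nonempty and contains an admissible $2q$ in all of Cases (i)--(iii) is the crux, and it is exactly here that each of the hypotheses $\beta < \alpha \wedge d$, $\alpha > 1$, and $r > d/\alpha$ is used. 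Once the exponent is pinned down, the three inequalities (Hölder, generalized Young for the finite measure $\mu$, Hardy--Littlewood--Sobolev for the Riesz kernel) are standard and the constant $C$ absorbs $\mu(\R^d)$ and the HLS/Young constants, which depend only on $d,\beta$ (equivalently $d,\alpha$) and $\gamma$ as claimed. I would also remark that nonnegativity of $f,g$ (or passing to $|f|,|g|$) and of the kernels is what licenses the use of Tonelli to reassociate the convolutions before estimating.
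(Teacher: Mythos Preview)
Your overall strategy---H\"older, then Young's inequality for the finite measure $\mu$, then Hardy--Littlewood--Sobolev for the Riesz kernel, handled case by case---is exactly the paper's approach, and your treatment of Cases (ii) and (iii) is essentially correct and matches the paper (including the choices $2q=2$ and $2q=\frac{2r}{2r-1}$ respectively).

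The confusion in Case~(i) is entirely due to an arithmetic slip. From $1-\frac{1}{2q}=\frac{1}{2q}-\frac{d-\beta}{d}$ you should get $\frac{1}{q}=1+\frac{d-\beta}{d}=\frac{2d-\beta}{d}$, not $2-\frac{d-\beta}{d}$. The HLS-critical exponent is therefore
\[
2q=\frac{2d}{2d-\beta},
\]
not $\frac{2d}{d+\beta}$. With the correct value the interval check is immediate: $\frac{2d}{2d-\beta}<\frac{2d}{2d-\alpha}$ is literally $\beta<\alpha$, and $\frac{2d}{2d-\beta}<\frac{d+\alpha}{d}$ rearranges to $\beta<\frac{2d\alpha}{d+\alpha}$, which holds because $\frac{2d\alpha}{d+\alpha}$ is the harmonic mean of $d$ and $\alpha$ and hence is at least $\min(d,\alpha)>\beta$. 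No interpolation, weak-type bounds, or sliding of exponents is needed; the hypothesis $\beta<\alpha\wedge d$ does all the work directly. Once you fix this slip, your proof is complete and coincides with the paper's.
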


\begin{remark}
The requirement 
$2q < \frac{2d}{2d-\alpha}$ ensures that 
\begin{equation} \label{kappa}
\kappa =\frac{2d}{\alpha}\left(1-\frac{1}{2q}\right)<1,
\end{equation}
 while the requirement $2q < \frac{d+\alpha}{d}$ ensures that $G^{\frac{1}{2q}}(1,x)$ is integrable. Note also that $\frac{d+\alpha}{d} \leq \frac{2d}{2d-\alpha}$ only if $d\leq \alpha$. Since $\alpha\leq 2$, this can happen only in the one-dimensional case $d=1$ or in the heat case $\alpha=2$ and $d=1,2$. In the latter however, $G^{\frac{1}{2q}}(1,x)$ is integrable regardless of the value $2q$ and consequently, our results can be applied in that case as well under a condition $2q \in \left(1,\frac{2d}{2d-2}\right)$. 
\end{remark}
 
 \begin{proof}[Proof of Proposition \ref{p1}]
 We decompose the proof into the three possible cases from Assumption \ref{assumption:main}:
 
 \medskip
 \noindent
 {\it Case (i)}: Taking $2q=\frac {2d} {2d-2\beta}$ (recall $\beta < \alpha\wedge d$) and using H\"older's inequality, we obtain
 \[
 \int_{\R^{2d}} f(x) [ g*\gamma] (x) dx \le  \| f\|_{L^{2q}(\R^d)} \| g*\gamma\|_{L^{2q/(2q-1)}(\R^d)}.
 \]
 Notice that  $g*\gamma=( I_{d-\beta} g) * \mu$. Therefore, it follows from \eqref{eq:young-lp-l1} that 
 \[
 \| g*\gamma\|_{L^{2q/(2q-1)}(\R^d)} \le  \mu(\R^d)  \| I_{d-\beta} g \|_{L^{2q/(2q-1)}(\R^d)}.
 \]
 We then conclude the proof using the fact that $2q=\frac {2d} {2d-2\beta}$ and applying the following  Hardy-Littlewood-Sobolev inequality (see e.g. \cite{Lieb} and references therein):  for $1<p<r<\infty$ satisfying $\frac{1}{r} = \frac{1}{p} - \frac{d-\beta}{d}$, we have
\begin{equation}
\label{eq:HLS-lp-mapping}
\| I_{d-\beta} g\|_{L^{r}(\R^d)} \leq C\| g\|_{L^{p}(\R^d)}.
\end{equation}

 \medskip
 \noindent
 {\it Case (ii)}:   Suppose $\beta=d$.
  By Young's inequality \eqref{eq:young-lp-l1} and H\"older's inequality, we get 
$$
\int_{\mathbb{R}^d} f(x) \left[g \ast \gamma\right](y)dy \leq \Vert f\Vert_{L^{2}(\mathbb{R}^d)}\Vert g \ast \gamma \Vert_{L^{2}(\mathbb{R}^d)} \leq C \Vert f\Vert_{L^{2}(\mathbb{R}^d)}\Vert g \Vert_{L^{2}(\mathbb{R}^d)}.
$$
Consequently, one can always choose $q=1$ in \eqref{eq:key-convolution-bound}. However, then $2q < \frac{2d}{2d-\alpha}\wedge \frac{d+\alpha}{d}$ only if $\alpha > d$. Taking into account the fact $\alpha \in (0,2]$, this forces $d=1$ and $\alpha>1$. In conclusion, in the one-dimensional case and for $\alpha>1$ we obtain the estimate  (\ref{eq:key-convolution-bound}) with $q=1$, which
 completes the proof of Case (ii).

 \medskip
 \noindent
 {\it Case (iii)}:  
Let $\beta=d\ge \alpha$ and suppose that $\gamma$ is absolutely continuous with a density $\gamma \in L^r(\R^d)$, where $r>\frac d\alpha$.
In this case we choose $2q=\frac {2r}{2r-1}$. Clearly $2q>1$. Moreover, condition $r>\frac d\alpha$ implies $2q < \frac {2d}{2d-\alpha}$ and
$\frac {2d}{2d-\alpha} \le \frac {d+\alpha} d$ because $d \ge \alpha$. Finally, 
 H\"older's inequality and Young's inequality \eqref{eq:young} gives us
\[
\int_{\R^d} f(x) g(y)  \gamma(x-y) dx dy \le \| f\|_{L^{2q}(\R^d)} \| g\|_{L^{2q}(\R^d)}\| \gamma\|_{L^{r}(\R^d)}.
\]
\end{proof}

\section{Proof of Theorem \ref{t4.2}}
\label{sec:existence-proof}
For $t\geq 0$, we denote
\begin{equation}
\label{itx}
I(t)= \int_{\mathbb{R}^{d} } \int_{\mathbb{R}^{d} }  G_{\alpha} (t,y )G_{\alpha} (t,y' )\gamma( y-y ')dy'dy.
\end{equation}
Taking the Fourier transform and using (\ref{FG}), we see that $I(t)$ can equally be given by 
\begin{equation}
\label{itx:fourier}
I(t)= \int_{\mathbb{R}^{d}} e ^{-2t\vert \xi \vert ^{\alpha}} \widehat{\gamma}(d\xi).
\end{equation}
Suppose now that $\gamma$ satisfies \eqref{eq:dalang}. For $\beta>0$, we define a function $\Upsilon(\beta)$ by
$$
\Upsilon(\beta) := \int_0^\infty e^{-\beta t}I(t)dt = \int_{\RR^d} \frac{\widehat{\gamma}(d\xi)}{\beta+2 |\xi|^\alpha}.
$$
Clearly, $\Upsilon$ is non-negative, decreasing in $\beta$, and $\lim_{\beta\to \infty} \Upsilon(\beta)=0$. 

Before proving Theorem \ref{t4.2} we introduce the following technical lemma that can be viewed as a fractional version of \cite[Lemma 2.5]{CK}.
\begin{lemma}
\label{lemma:convergence}
Let $I(t)$ be given by \eqref{itx} and, for given $\iota>0$, let $h_n$ be defined recursively by $h_0(t) = 1$, and for $n\geq 1$
$$
h_{n}(t) = \iota\int_0^t h_{n-1}(s)I(t-s)ds.
$$
Then for any $p\geq 1$ and any fixed $T<\infty$, the series
\begin{equation}
\label{eq:lp-convergence}
H(\iota,p,t) :=\sum_{n\geq 0} \left[h_n(t)\right]^{\frac{1}{p}}
\end{equation}
converges uniformly in $t\in[0,T]$.
\end{lemma}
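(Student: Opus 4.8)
The plan is to obtain a summable upper bound for $\left[h_n(t)\right]^{1/p}$ that is uniform in $t \in [0,T]$, and then invoke the Weierstrass $M$-test. The key input is a growth estimate for $I(t)$ near $t=0$ together with its integrability. First I would record that, by \eqref{itx:fourier} and the fractional Dalang condition \eqref{eq:dalang}, $I$ is locally integrable; more precisely I would show there exist constants $c>0$ and $\kappa\in(0,1)$ with $\int_0^t I(s)\,ds \le c\,t^{1-\kappa}$ for all $t\in[0,T]$. Such a bound follows from the scaling \eqref{scale} and the two-sided estimate \eqref{Eq1} on $G_\alpha$: writing $I(t)=\int_{\R^{2d}}G_\alpha(t,y)G_\alpha(t,y')\gamma(y-y')\,dy\,dy'$ and using the boundedness of the convolution operator from Proposition \ref{p1}, one gets $I(t)\le C\|G_\alpha(t,\cdot)\|_{L^{2q}(\R^d)}^2$, and the scaling property gives $\|G_\alpha(t,\cdot)\|_{L^{2q}}^2 = t^{-\kappa}\|G_\alpha(1,\cdot)\|_{L^{2q}}^2$ with $\kappa=\tfrac{2d}{\alpha}\bigl(1-\tfrac{1}{2q}\bigr)<1$ by \eqref{kappa}, and $G_\alpha(1,\cdot)\in L^{2q}$ because $2q<\tfrac{d+\alpha}{d}$. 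Integrating, $\int_0^t I(s)\,ds \le C' t^{1-\kappa}$.

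Next I would run an induction on $n$ to bound $h_n$. The natural ansatz, familiar from Picard/Gronwall arguments with a fractional singularity, is
\[
h_n(t) \le \frac{\iota^n \Gamma(1-\kappa)^n}{\Gamma\bigl(n(1-\kappa)+1\bigr)}\,(C' )^n\, t^{n(1-\kappa)},
\]
where $C'$ is the constant from the first step (one may absorb constants as needed). The inductive step is the standard Beta-integral computation: $\int_0^t s^{(n-1)(1-\kappa)} (t-s)^{-\kappa}\,ds = t^{n(1-\kappa)} B\bigl((n-1)(1-\kappa)+1,\,1-\kappa\bigr)$, and the Beta function identity collapses the product of Gammas exactly into the claimed form. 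Here I should be slightly careful: the estimate from the first step controls $\int_0^t I(s)\,ds$, not $I(t-s)$ pointwise, so I would instead first establish the pointwise bound $I(t)\le C' t^{-\kappa}$ (which the argument above in fact gives directly, before integrating) and feed that into the convolution in the recursion.

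From the bound on $h_n(t)$ I extract
\[
\left[h_n(t)\right]^{1/p} \le \left(\frac{\bigl(\iota C' \Gamma(1-\kappa)\bigr)^n\, T^{n(1-\kappa)}}{\Gamma\bigl(n(1-\kappa)+1\bigr)}\right)^{1/p} =: M_n,
\]
uniformly for $t\in[0,T]$, and it remains to check $\sum_n M_n<\infty$. Since $\Gamma\bigl(n(1-\kappa)+1\bigr)$ grows super-exponentially in $n$ (faster than $a^n$ for every $a$, because $1-\kappa>0$), while the numerator grows only exponentially, the ratio inside the parentheses tends to $0$ faster than geometrically; raising to the power $1/p$ preserves summability (e.g. by the root test, $M_n^{1/n}\to 0$). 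Hence $\sum_{n\ge 0}\left[h_n(t)\right]^{1/p}$ converges uniformly on $[0,T]$ by the $M$-test. \emph{The main obstacle} I anticipate is the first step: getting the clean pointwise bound $I(t)\le C t^{-\kappa}$ with the \emph{right} exponent $\kappa<1$ in all three cases of Assumption \ref{assumption:main} simultaneously — this is exactly where Proposition \ref{p1} (with its tailored choice of $2q$) and the integrability of $G_\alpha(1,\cdot)^{1/(2q)}$... rather, of $G_\alpha(1,\cdot)\in L^{2q}$, must be combined; the subsequent Beta-function induction and the $M$-test are routine.
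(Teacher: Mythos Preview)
Your argument is internally correct but proves a weaker statement than the one needed. The lemma sits in Section~\ref{sec:existence-proof} and is invoked in the proof of Theorem~\ref{t4.2}, whose only hypothesis on $\gamma$ is the fractional Dalang condition~\eqref{eq:dalang}. Your route, however, obtains the pointwise bound $I(t)\le C t^{-\kappa}$ by applying Proposition~\ref{p1}, and that proposition is stated and proved under Assumption~\ref{assumption:main}, not under~\eqref{eq:dalang} alone. Under the bare Dalang condition one only knows $\int_0^T I(t)\,dt<\infty$ (equivalently $\Upsilon(\beta)<\infty$), and this does \emph{not} force a power-law bound $I(t)\le Ct^{-\kappa}$ with $\kappa<1$: think of $I(t)\sim t^{-1}(\log(1/t))^{-2}$ near $0$. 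So the first step of your plan is unavailable in the generality the lemma requires, and the Mittag-Leffler/Beta induction never gets off the ground.

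The paper avoids this by a genuinely different device: it takes Laplace transforms. From the recursion one gets
\[
\int_0^\infty e^{-\beta t}h_n(t)\,dt=\frac{1}{\beta}\bigl[\iota\,\Upsilon(\beta)\bigr]^n,
\qquad \Upsilon(\beta)=\int_0^\infty e^{-\beta t}I(t)\,dt=\int_{\R^d}\frac{\widehat\gamma(d\xi)}{\beta+2|\xi|^\alpha},
\]
and since $\Upsilon(\beta)\to 0$ as $\beta\to\infty$ (dominated convergence, using only~\eqref{eq:dalang}), one can pick $\beta$ with $\iota\Upsilon(\beta)\le\tfrac12$ and sum the geometric series; for $p>1$ a H\"older step gives $\int_0^\infty e^{-\beta t}h_n(t)^{1/p}\,dt\le \beta^{-1}[\iota\Upsilon(\beta)]^{n/p}$. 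This yields $H(\iota,p,t)\le e^{Ct}$ uniformly on $[0,T]$. The trade-off: your Gamma-function bound, when the pointwise estimate \emph{is} available (e.g.\ under Assumption~\ref{assumption:main}), gives sharper quantitative control $h_n(t)\le C^n t^{n(1-\kappa)}/\Gamma(n(1-\kappa)+1)$ and hence Mittag-Leffler-type growth in $t$; the paper's Laplace argument is softer but works under the minimal hypothesis~\eqref{eq:dalang}, which is exactly what the existence theorem needs.
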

\begin{proof}
By the same argument as in the proof of \cite[Lemma 2.5]{CK}, we get, for any $\beta>0$, that
$$
\int_0^\infty e^{-\beta t}h_n(t) dt = \frac{1}{\beta}\left(\iota \int_0^\infty e^{-\beta t} I(t)dt\right)^n = \frac{1}{\beta}\left[\iota\Upsilon(\beta)\right]^n.
$$
By choosing $\beta$ large enough, we have $\iota\Upsilon(\beta)\le 1/2$ and, as in \cite{CK}, by choosing the  smallest such $\beta$ this gives us $H(\iota,1,t) \leq \exp(Ct)$ for some constant $C$ depending on $\Upsilon$ and $\iota$. Similarly, for the general case $p>1$ we may apply H\"older inequality to get 
$$
\int_0^\infty e^{-\beta t}h^{1/p}_n(t) dt \leq \beta^{-\frac{1}{q}} \left(\int_0^\infty e^{-\beta t}h_n(t) dt\right)^{\frac{1}{p}} = \frac{1}{\beta}\left[\iota\Upsilon(\beta)\right]^{\frac{n}{p}},
$$
where $\frac 1p + \frac 1q=1$.
Hence, similar arguments show that $H(\iota,p,t)\leq \exp(Ct)$ and, in particular, that the series in \eqref{eq:lp-convergence} converges.
\end{proof}
Equipped with Lemma \ref{lemma:convergence},  we are now able to prove Theorem \ref{t4.2}.
\begin{proof}[Proof of Theorem \ref{t4.2}] 
Define the standard Picard iterations by setting $u_{0}(t,x)=1$ and, for $n\geq 1$,
\begin{equation*}
u_{n+1} (t,x)=u_{0}(t,x)+\int_{0} ^ {t}\int_{\mathbb{R}^{d}} G_{\alpha} (t-s,x -y ) \sigma (u_{n}(s, y))W (ds, dy), \hskip0.5cm t\geq 0, x\in \mathbb{R}^{d}.
\end{equation*}  
By induction, we can easily show that for every $n\geq 0$,  $u_{n}(t, x)$ is well-defined and, for every $p\geq 2$ and $\beta>0$, we have
\begin{equation}\label{22i-1}
\sup_{t\in [0, T]} \sup_{x\in \mathbb{R}^{d}} \E\left[e^{-p\beta t}\left| u_{n}(t,x) \right| ^p\right]<\infty.
\end{equation}
This in turn shows that
\begin{equation}\label{22i-needed}
\sup_{t\in [0, T]} \sup_{x\in \mathbb{R}^{d}} \E\left[ \left| u_{n}(t,x) \right| ^ {p} \right]<\infty.
\end{equation}
To see (\ref{22i-1}), we first observe that it is clearly true for $n=0$. Suppose now that it holds for some $n$. We have
$$
e^{-\beta t} u_{n+1} (t,x)=e^{-\beta t} u_{0}(t,x)+\int_{0} ^ {t}e^{-\beta t}\int_{\mathbb{R}^{d}} G_{\alpha} (t-s,x -y ) \sigma (u_{n}(s, y))W (ds, dy)
$$
and, for every $p\geq 2$,  by using \eqref{inte} and (\ref{burk2}), 
\begin{eqnarray*}
\E \left[e^{-p\beta t}\left|u_{n+1}(t,x)\right|^p \right] &\leq &  C\left( 1+  \Big\| \int_{0} ^{t} e^{-\beta t}\int_{\mathbb{R}^{d}} \int_{\mathbb{R}^{d}}G_{\alpha} (t-s,x -y )G_{\alpha} (t-s,x -y' ) \right.\\
&&\phantom{kukkuu}\left.\times\sigma(u_{n}(s, y)) \sigma (u_{n}(s, y')) \gamma(y-y ')dy'dy\Big\| _{\frac{p}{2}}^{\frac{p}{2}}\right)\\
&\leq &  C \left[ 1+  \left( \int_{0} ^{t} e^{-\beta (t-s)}\int_{\mathbb{R}^{d}} \int_{\mathbb{R}^{d}}G_{\alpha} (t-s,x -y )G_{\alpha} (t-s,x -y' )\right.\right.\\
&&\phantom{kukkuu}\left.\left.\times
e^{-\beta s}\left| \left| \sigma(u_{n}(s, y)) \sigma (u_{n}(s, y'))\right| \right| _{\frac{p}{2}}\gamma(y-y ')dy'dy\right)\right] ^{\frac{p}{2}}.
\end{eqnarray*}
By using the Lipschitz assumption on $\sigma$ and the induction hypothesis we get
$$
\E \left[e^{-p\beta s}\vert  \sigma (u_{n}(s,y)) \vert ^p\right] \leq    C\left(1+ \sup_{s\in [0,T],y\in \mathbb{R}^d} \E \left[e^{-p\beta s}\vert u_{n}(s, y)\vert ^p  \right]  \right) <\infty.
$$ 
Hence 
$$
\sup_{s\in [0,T]}\sup_{y,y'\in \RR^d} e^{-\beta s}\left| \left| \sigma(u_{n}(s, y)) \sigma (u_{n}(s, y'))\right| \right| _{\frac{p}{2}} < \infty
$$
and we obtain 
\begin{align*}
\E \left[e^{-p\beta t}\left|u_{n+1}(t,x)\right| ^p\right]  &\leq C \left( 1+ \int_{0}^{t} e^{-\beta(t-s)}I (t-s) ds \right)^{\frac{p}{2}} \\
& \leq C \left(1+ \int_0^T e^{-\beta s}I(s)ds\right)^{\frac{p}{2}}  < \infty.
\end{align*}
Applying similar arguments together with H\"older's inequality for 
$$
H_{n}(t):= \sup_{x \in \mathbb{R}^{d}} \E \left[\left| u_{n+1} (t, x)- u_{n}(t,x) \right| ^ {p}\right]
$$
gives us
\begin{align*}
H_n(t)&\leq  C \left[  \int_{0} ^ {t} \int_{\mathbb{R}^{d}  }  \int_{\mathbb{R}^{d}  }  G_{\alpha} (t-s,x -y )G_{\alpha} (t-s,x -y' )\left| \left| \sigma (u_{n}(s, y))-  \sigma (u_{n-1}(s, y))\right|\right| _{p}\right.\\
&\phantom{kukkuu}\times\left.
  \left|  \left| \sigma (u_{n}(s, y'))-  \sigma (u_{n-1}(s, y'))\right|\right|_{p} \gamma(y-y')dy'dyds\right] ^ {\frac{p}{2}}\\
&\leq  C \int_{0} ^ {t} \int_{\mathbb{R}^{d}  }   \int_{\mathbb{R}^{d}  }  G_{\alpha} (t-s,x -y )G_{\alpha} (t-s,x -y' ) H_{n-1}(s) \gamma(y-y')dy'dyds\\
&\leq  C \int_{0} ^{t} I(t-s) H_{n-1}(s)ds.
\end{align*}
By standard arguments, it suffices to consider the case of an equality. In this case, it follows from Lemma \ref{lemma:convergence} that $\sum_{n\geq 1} H_{n} (t)^ {\frac{1}{p}}$ converges uniformly on $[0, T]$. Consequently, the sequence $u_{n}$ converges in $L^ {p}(\Omega) $, uniformly on $[0, T] \times \mathbb{R}^{d} $, and its limit satisfies (\ref{mild}).
The uniqueness follows in a similar way, and the stationarity of the solution with respect to the space variable is a consequence of the proof of Lemma 18 in \cite{Da}.

\end{proof}

\section{Proofs of Theorem \ref{t1.2} and Theorem \ref{t1.4}}
\label{sec:clt-proofs}
In this section we prove Theorems \ref{t1.2} and \ref{t1.4}. The key ingredient for the proofs is to bound the Malliavin derivative of the solution to (\ref{1}) by a quantity involving the Green kernel associated to the fractional operator (\ref{FG}). Once a suitable bound is established, it suffices to study the asymptotic variance and follow the ideas presented in \cite{HNV18,HNVZ19}. We divide this section into four subsections. In the first one we study the (bound for the) Malliavin derivative of the solution, and in the second we study the correct normalization rate. The last two subsections are devoted to the proofs of Theorem \ref{t1.2} and Theorem \ref{t1.4}.

\subsection{Bound for the Malliavin derivative}

We begin by providing a linear equation for the Malliavin derivative of the solution. The claim follows from \eqref{mild}, and the proof is rather standard. For this reason we omit the details. 
\begin{proposition}\label{pp11}
Let $u$ be the mild solution to (\ref{1}). Then for every  $t\in (0, T]$, $p\ge 2$  and $x \in \mathbb{R}^d $, the random variable  $u(t, x)$ belongs to the  Sobolev space
$\mathbb{D}^{1,p}$  and its Malliavin derivative satisfies
\begin{eqnarray}
&&D_{r, z }u(t, x)= G_{\alpha} (t-r, x-z) \sigma (u(r, z))\nonumber\\
&&+ \int_{r} ^ {t}\int_{\mathbb{R}   } G_{\alpha} (t-s, x-y) \Sigma (s, y) D_{r, z } u(s, y) W(ds, dy),\label{22i-2}
\end{eqnarray}
where $\Sigma (r, z) $ is an adapted and bounded (uniformly with respect to $r$ and $z$) stochastic process that coincides with $\sigma ' (u(r, z)) $ whenever $\sigma $ is differentiable.
\end{proposition}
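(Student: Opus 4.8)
The plan is to differentiate, in the Malliavin sense, the Picard iteration used in the proof of Theorem~\ref{t4.2}, and then to pass to the limit. Recall the iterates $u_0(t,x)\equiv 1$ and
$$
u_{n+1}(t,x) = 1 + \int_0^t\!\!\int_{\mathbb{R}^d} G_\alpha(t-s,x-y)\,\sigma(u_n(s,y))\,W(ds,dy).
$$
First I would prove by induction on $n$ that $u_n(t,x)\in\mathbb{D}^{1,p}$ for every $p\geq 2$, $t\in(0,T]$ and $x\in\mathbb{R}^d$, and that
\begin{equation}\label{eq:Dun-recursion}
D_{r,z}u_{n+1}(t,x) = G_\alpha(t-r,x-z)\,\sigma(u_n(r,z)) + \int_r^t\!\!\int_{\mathbb{R}^d} G_\alpha(t-s,x-y)\,\Sigma_n(s,y)\,D_{r,z}u_n(s,y)\,W(ds,dy),
\end{equation}
where $\Sigma_n$ is adapted, bounded by the Lipschitz constant of $\sigma$, and equals $\sigma'(u_n)$ wherever $\sigma$ is differentiable. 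The case $n=0$ is immediate since $u_0$ is deterministic. For the inductive step one combines two standard facts: the composition of a Lipschitz function with an element of $\mathbb{D}^{1,p}$ is again in $\mathbb{D}^{1,p}$, with chain rule $D\sigma(u_n(s,y)) = \Sigma_n(s,y)\,Du_n(s,y)$ (see e.g.\ \cite{NuPa}), and the commutation relation between $D$ and the Walsh integral (i.e.\ $D\delta = \mathrm{Id}+\delta D$ on the relevant domain), which produces \eqref{eq:Dun-recursion} upon applying $D_{r,z}$ to the formula for $u_{n+1}$.

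Next I would derive uniform moment estimates for the Malliavin derivatives. Setting $\varphi_n(t) := \sup_{x\in\mathbb{R}^d}\E\big[\|Du_n(t,x)\|_{\HH}^p\big]$, the It\^o isometry \eqref{inte} and the Burkholder--Davis--Gundy inequality \eqref{burk2}, applied to the $\HH$-norm of the right-hand side of \eqref{eq:Dun-recursion}, together with the boundedness of $\Sigma_n$ and the uniform $L^p$ bounds \eqref{22i-needed} on $u_n$, yield an integral inequality of the form
$$
\varphi_{n+1}(t) \leq C_1 + C_2\int_0^t I(t-s)\,\varphi_n(s)\,ds,
$$
with $I$ as in \eqref{itx}. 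Iterating this inequality and invoking Lemma~\ref{lemma:convergence} shows that $\sup_{n}\sup_{t\in[0,T]}\varphi_n(t)<\infty$. Since $u_n(t,x)\to u(t,x)$ in $L^p(\Omega)$ uniformly in $(t,x)$ (established in the proof of Theorem~\ref{t4.2}) and $\{Du_n(t,x)\}_n$ is bounded in $L^p(\Omega;\HH)$, the closability of $D$ gives $u(t,x)\in\mathbb{D}^{1,p}$ and the weak convergence $Du_n(t,x)\rightharpoonup Du(t,x)$ in $L^p(\Omega;\HH)$. Passing to the limit in \eqref{eq:Dun-recursion}, using in addition that $\sigma(u_n)\to\sigma(u)$ and that the $\Sigma_n$ can be taken to converge to a limiting bounded adapted process $\Sigma$, delivers the claimed equation \eqref{22i-2}.

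The main obstacle I anticipate is the uniform $L^p(\Omega;\HH)$ bound on $Du_n$: since the norm of $\HH$ carries the spatial covariance $\gamma$, estimating $\|Du_{n+1}(t,x)\|_{\HH}$ forces one to pair the two Green-kernel factors against $\gamma$ and to recognize the resulting time-dependent kernel as $I(t-s)$; only then does the fractional Dalang condition \eqref{eq:dalang} (through Lemma~\ref{lemma:convergence}) close the recursion. A secondary but genuine subtlety is the measurability and adaptedness of $\Sigma_n$ when $\sigma$ is merely Lipschitz rather than $C^1$, which I would resolve by mollifying $\sigma$ and passing to the limit; this is precisely why the statement only asserts $\Sigma = \sigma'(u)$ on the set where $\sigma$ is differentiable.
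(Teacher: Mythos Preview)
Your proposal is correct and is precisely the standard argument the paper has in mind: the authors omit the proof entirely, writing only that ``the claim follows from \eqref{mild}, and the proof is rather standard.'' Your Picard-iteration scheme, the chain rule for Lipschitz $\sigma$, the commutation $D\delta=\mathrm{Id}+\delta D$, and the closure argument via uniform $L^p(\Omega;\HH)$ bounds obtained through Lemma~\ref{lemma:convergence} are exactly the ingredients one supplies to fill in such an omission.
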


The following result provides a bound for the $p$-norm of the Malliavin derivative of the solution.

\begin{proposition}
\label{prop:malliavin-bound}
Suppose that $\gamma$ satisfies Assumption \ref{assumption:main} and  recall (see (\ref{kappa})) that
$\kappa = \frac{2d}{\alpha}\left(1-\frac{1}{2q}\right)$, where $q$ is from Proposition \ref{p1}.
Then for every $0<s<t<T$, for every $x, y \in \mathbb{R}^d $, and for every $p\geq 2$ we have 
\begin{equation*}
\Vert D_{s,y}u(t,x)\Vert _{p}  \leq  c (t-s) ^{-\frac{\kappa}{2}} G^{\frac 1{2q}} _{\alpha} (t-s, x-y).
\end{equation*}
\end{proposition}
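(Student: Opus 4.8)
The plan is to bootstrap from the linear equation \eqref{22i-2} for the Malliavin derivative via a Picard-type iteration, exactly as one would estimate $u_n$ itself, but now tracking the singularity $(t-s)^{-\kappa/2}$ coming from the initial term $G_\alpha(t-s,x-z)\sigma(u(s,z))$ raised to the power $\frac1{2q}$. First I would fix $0<s<t<T$ and iterate \eqref{22i-2}, writing $D_{s,y}u(t,x)$ as a series whose zeroth term is $G_\alpha(t-s,x-y)\sigma(u(s,y))$ and whose $n$-th term is an $n$-fold stochastic-integral convolution against $G_\alpha$ weighted by the bounded process $\Sigma$. Applying the Burkholder–Davis–Gundy inequality \eqref{burk2} at each step, together with the uniform boundedness of $\Sigma$ and the uniform $L^p$ bound on $u$ from \eqref{22i-needed}, reduces everything to a deterministic recursion: if $\varphi_n(t,x)$ denotes (the square of) the $p$-norm of the $n$-th term, then
\[
\varphi_{n}(t,x) \le c\int_s^t \int_{\R^{2d}} G_\alpha(t-r,x-z)G_\alpha(t-r,x-z')\,\varphi_{n-1}(r,z)^{1/2}\varphi_{n-1}(r,z')^{1/2}\,\gamma(z-z')\,dz\,dz'\,dr,
\]
and one wants to show $\sum_n \varphi_n(t,x)^{1/2}$ is dominated by $c(t-s)^{-\kappa/2}G_\alpha^{1/2q}(t-s,x-y)$.

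The crucial estimate — and the main obstacle — is the base case: controlling the spatial integrals of the Green kernel against $\gamma$ in the right $L^{2q}$ sense. Here I would invoke Proposition \ref{p1}, which gives, for $f,g\in L^{2q}(\R^d)$,
\[
\int_{\R^d} f(z)\,[g\ast\gamma](z)\,dz \le C\Vert f\Vert_{L^{2q}}\Vert g\Vert_{L^{2q}},
\]
and apply it with $f(\cdot)=G_\alpha(t-r,x-\cdot)\cdot(\text{previous term})$, $g$ similarly. The point of the exponent $2q$ chosen in Proposition \ref{p1} is precisely that $2q<\frac{d+\alpha}{d}$ makes $G_\alpha^{1/2q}(1,\cdot)$ integrable (using the upper bound \eqref{Eq1} and \eqref{29m-1}), so that $\Vert G_\alpha(t-r,x-\cdot)\Vert_{L^{2q}}$ can be computed via the scaling property \eqref{scale}: a change of variables yields $\Vert G_\alpha(\tau,\cdot)\Vert_{L^{2q}} = \tau^{-\frac{d}{\alpha}(1-\frac1{2q})}\Vert G_\alpha(1,\cdot)\Vert_{L^{2q}}$, i.e. the exponent $\tau^{-\kappa/4}$ with $\kappa$ as in \eqref{kappa}. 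Thus one spatial integration produces a factor $(t-r)^{-\kappa/2}$, and $\kappa<1$ (guaranteed by $2q<\frac{2d}{2d-\alpha}$) ensures time-integrability.

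Finally I would close the iteration. Using the semigroup property \eqref{semig} to recombine the $G_\alpha$ factors along the chain of convolutions, and the Beta-type integral $\int_s^t (t-r)^{-\kappa/2}(r-s)^{-\kappa/2\cdot(\text{something})}\,dr$, one shows the $n$-th term obeys $\varphi_n(t,x)^{1/2}\le G_\alpha^{1/2q}(t-s,x-y)\cdot c^n (t-s)^{-\kappa/2}\frac{\Gamma(\cdot)^n}{\Gamma(n(1-\kappa/2)+\cdots)}$ or a similar Mittag-Leffler-summable bound; the series in $n$ then converges and is bounded uniformly in $t\in[0,T]$, giving the claimed inequality with a constant $c=c(t,\alpha,\sigma,\gamma)$. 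The delicate point throughout is keeping the spatial $G_\alpha^{1/2q}$ factor intact while peeling off one $G_\alpha$ (not $G_\alpha^{1/2q}$) per step against $\gamma$ — this is where the precise interplay between Proposition \ref{p1}, the scaling \eqref{scale}, and the semigroup property \eqref{semig} has to be orchestrated carefully, and it is exactly the place where the fractional case diverges from the classical heat-kernel computation of \cite{HNV18,HNVZ19}.
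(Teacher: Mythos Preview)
Your plan is essentially the paper's own argument: the paper also reduces Proposition~\ref{prop:malliavin-bound} to a deterministic recursion via \eqref{burk2} (this is packaged as Lemma~\ref{ll2}), iterates with $g_0=G_\alpha$, applies Proposition~\ref{p1} with $f=g=G_\alpha(t-r,x-\cdot)G_\alpha^{1/2q}(r,\cdot)$, and sums a Mittag--Leffler series in powers of $t^{1-\kappa}$.

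One point in your sketch deserves sharpening. You write that after Proposition~\ref{p1} one ``computes $\|G_\alpha(\tau,\cdot)\|_{L^{2q}}$'' via scaling and then separately uses the semigroup property to recombine. But these two steps cannot be decoupled: if you pull out $\|G_\alpha(t-r,x-\cdot)\|_{L^{2q}}$ as a scalar, you lose the spatial variable $x$ and can never recover the factor $G_\alpha^{1/2q}(t-s,x-y)$ in the final bound. What the paper actually does (and what you must do) is keep the product inside the norm, so that
\[
\|f\|_{L^{2q}}^{2q}=\int_{\R^d} G_\alpha^{2q}(t-r,x-z)\,G_\alpha(r-s,z-y)\,dz,
\]
and then use the \emph{pointwise} inequality $G_\alpha^{2q}(\tau,z)\le C\,\tau^{-\kappa q}G_\alpha(\tau,z)$ (an immediate consequence of \eqref{Eq1} since $2q>1$) \emph{before} integrating, which then feeds directly into the semigroup identity \eqref{semig} to produce $(t-r)^{-\kappa q}G_\alpha(t-s,x-y)$. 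This is precisely the ``delicate point'' you flag in your last paragraph; just be aware that the mechanism is the pointwise comparison $G_\alpha^{2q}\lesssim \tau^{-\kappa q}G_\alpha$, not a bare $L^{2q}$-norm computation. (Relatedly, your exponent $\tau^{-\kappa/4}$ should read $\tau^{-\kappa/2}$, and the squared contribution in the recursion is $(t-r)^{-\kappa}$, giving the Beta integral $\int_0^t s^{j(1-\kappa)-\kappa}(t-s)^{-\kappa}ds$ with exponent $1-\kappa$, not $1-\kappa/2$.)
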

Proposition \ref{prop:malliavin-bound} is based on the following lemma which proof is postponed to the appendix.
\begin{lemma}\label{ll2}
Suppose that $\gamma$ satisfies Assumption \ref{assumption:main} and assume that $g: [0, T]\times \mathbb{R}^d\to  \mathbb{R}^d$ is non-negative function satisfying, for every $t\in [0, T] $ and ${x} \in \mathbb{R}^d $,
\begin{eqnarray}
g(t,{x} ) ^ {2} &\leq & G_{\alpha } (t,{x}) ^ {2} + \int_{0} ^ {t} \int_{\mathbb{R}^d  }  \int_{\mathbb{R}^d }    G_{\alpha }(t-s, {x}-{y} ) G_{\alpha }(t-s, {x}-{y}' ) \nonumber\\
&& \phantom{kukkuu}\times g(s, {y}) g(s, {y}' )  \gamma(y-y')dy'dyds \label{20i-1}.
\end{eqnarray}
Then
\begin{equation}\label{20i-2}
g(t, {x})  \leq c t ^{-\frac{\kappa}{2}}G _\alpha^{\frac 1{2q}} (t, {x}),
\end{equation}
where $\kappa = \frac{2d}{\alpha}\left(1-\frac{1}{2q}\right)$ and $q$ is from Proposition \ref{p1}.
\end{lemma}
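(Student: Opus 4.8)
The plan is to collapse the quadratic integral inequality \eqref{20i-1} into a scalar \emph{linear} fractional Gronwall inequality for
\[
\psi(t):=\sup_{x\in\mathbb{R}^d}\frac{g(t,x)^2}{G_\alpha(t,x)^{1/q}},\qquad t\in(0,T],
\]
and then to solve that inequality, the whole point being that $\kappa<1$.

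First I would handle the spatial double integral in \eqref{20i-1}. Fixing $0<s<t$ and $x$, and writing $f(y)=G_\alpha(t-s,x-y)\,g(s,y)$, the inner integral equals $\int_{\mathbb{R}^d}f(y)\,[f\ast\gamma](y)\,dy$ by symmetry of $\gamma$, so Proposition \ref{p1} bounds it by $C\|f\|_{L^{2q}(\mathbb{R}^d)}^2=C\big(\int_{\mathbb{R}^d}G_\alpha(t-s,x-y)^{2q}g(s,y)^{2q}\,dy\big)^{1/q}$. Then I would use two elementary facts about the kernel. From the upper bound in \eqref{Eq1}, $\|G_\alpha(t-s,\cdot)\|_\infty\le K_\alpha(t-s)^{-d/\alpha}$, whence
\[
G_\alpha(t-s,x-y)^{2q}\le\big(K_\alpha(t-s)^{-d/\alpha}\big)^{2q-1}G_\alpha(t-s,x-y);
\]
on the other hand $g(s,y)^{2q}\le\psi(s)^{q}G_\alpha(s,y)$ by definition of $\psi$, and then the semigroup property \eqref{semig} gives $\int_{\mathbb{R}^d}G_\alpha(t-s,x-y)G_\alpha(s,y)\,dy=G_\alpha(t,x)$. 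Combining these, and using the exponent identities $\frac{2q-1}{q}\cdot\frac d\alpha=\big(2-\frac1q\big)\frac d\alpha=\kappa$ (cf. \eqref{kappa}), the inner integral is at most $K_\alpha^{(2q-1)/q}(t-s)^{-\kappa}\psi(s)\,G_\alpha(t,x)^{1/q}$, while the free term satisfies $G_\alpha(t,x)^2\le K_\alpha^{2-1/q}t^{-\kappa}G_\alpha(t,x)^{1/q}$ by the same identity. Dividing \eqref{20i-1} by $G_\alpha(t,x)^{1/q}$ and taking the supremum over $x$ yields, for explicit constants $a,b>0$,
\[
\psi(t)\ \le\ a\,t^{-\kappa}+b\int_0^t(t-s)^{-\kappa}\psi(s)\,ds,\qquad t\in(0,T].
\]

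Next I would solve this. Since $\kappa<1$ — which is exactly what the constraint $2q<\frac{2d}{2d-\alpha}$ in Proposition \ref{p1} provides, the other constraint $2q<\frac{d+\alpha}{d}$ being used only to keep $G_\alpha^{1/(2q)}(1,\cdot)$ integrable — the kernel $(t-s)^{-\kappa}$ is locally integrable. Iterating the inequality $N=\lceil 1/(1-\kappa)\rceil$ times turns the $N$-fold iterated kernel into a bounded one on $[0,T]$, so a final application of the ordinary Gronwall lemma, together with the elementary fact that each of the finitely many fractional integrals of $t^{-\kappa}$ appearing is $\le C_T\,t^{-\kappa}$ on $[0,T]$ (because $1-\kappa>0$, so the extra powers of $t$ are bounded by powers of $T$), yields $\psi(t)\le C_T\,t^{-\kappa}$. (Equivalently, one invokes the generalised Gronwall--Henry inequality and bounds the resulting Mittag--Leffler factor on $[0,T]$.) Unwinding the definition of $\psi$ gives $g(t,x)^2\le C_T\,t^{-\kappa}G_\alpha(t,x)^{1/q}$, i.e. $g(t,x)\le\sqrt{C_T}\,t^{-\kappa/2}G_\alpha^{1/(2q)}(t,x)$, which is \eqref{20i-2} with $c=\sqrt{C_T}$.

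I expect the main work to be the exponent bookkeeping: the argument hinges on the kernel of the resulting scalar inequality coming out \emph{exactly} as $(t-s)^{-\kappa}$ with $\kappa<1$, which is why one peels a single power off $G_\alpha(t-s,x-y)^{2q}$ and closes the remaining factor against $G_\alpha(s,y)$ through the semigroup — any other split would spoil the balance. A secondary technical point is the initialisation: to run the iteration one needs $\psi$ to be a priori finite and locally integrable on $(0,T]$, which, in the situation where \eqref{20i-1} is applied (namely $g(t,x)=\|D_{r,z}u(t,x)\|_p$, up to recentering and absorbing constants into $\gamma$), is furnished by a crude estimate coming from the first Picard step and is then sharpened to \eqref{20i-2} by the bootstrap above.
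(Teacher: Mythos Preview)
Your proposal is correct and follows essentially the same analytic path as the paper: apply Proposition \ref{p1} with $f(y)=G_\alpha(t-s,x-y)g(s,y)$, peel off $2q-1$ factors from $G_\alpha(t-s,\cdot)^{2q}$ via the sup bound to produce the factor $(t-s)^{-\kappa}$, and close the remaining $G_\alpha(t-s,x-y)G_\alpha(s,y)$ through the semigroup property to recover $G_\alpha(t,x)^{1/q}$. The only difference is packaging: the paper first reduces (via \cite{CH1}) to the equality case and runs an explicit Picard iteration $g_n$, proving by induction the bound $g_n(t,x)^2\le C\,G_\alpha^{1/q}(t,x)\sum_{j=0}^n \frac{\Gamma^j(1-\kappa)}{\Gamma((j+1)(1-\kappa))}\,t^{j(1-\kappa)-\kappa}$ and then summing the Mittag--Leffler series, whereas you compress this into the scalar inequality $\psi(t)\le a\,t^{-\kappa}+b\int_0^t(t-s)^{-\kappa}\psi(s)\,ds$ and invoke the generalised Gronwall lemma. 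These are two presentations of the same iteration. The one substantive trade-off is the point you flag yourself: your route needs $\psi$ to be a priori finite and locally integrable before the bootstrap can start, which you justify only in the application; the paper's Picard scheme sidesteps this because each $g_n$ is controlled from the outset, at the price of importing the comparison argument from \cite{CH1} to pass from the inequality to the equality.
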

\begin{proof}[Proof of Proposition \ref{prop:malliavin-bound}] 
In a standard way we can show that, for every $t\in (0, T]$ and $x \in \mathbb{R} $, the random variable  $u(t, x)$ belongs to the Sobolev space $\mathbb{D}^{1,p}$ for all $p\ge 2$ 
 and its Malliavin derivative satisfies (\ref{22i-2}). Moreover, using the Burkholder-Davis-Gundy inequality (\ref{burk2}) we obtain that, for any $p\geq 2$,
\begin{eqnarray*}
\Vert D_{r, z}u(t,x)  \Vert _{p} ^ {2} &\leq & C_{p}  G_{{\alpha}}  (t-r, x-z) ^ {2}\\
& + &  C_{p} \int_{r}^ {t} \int_{\mathbb{R}} \int_{\mathbb{R} }  G_{\alpha} (t-s, x-y) G_{\alpha} (t-s, x- y') \\
&& \times\Vert D_{r, z} u(s, y) \Vert _{p}  \Vert D_{r, z} u(s, y) \Vert _{p}\gamma (y-y ')dy'dyds.
\end{eqnarray*}
To conclude the proof, it suffices to apply Lemma \ref{ll2} with $\theta =t-r, \eta= x-z $, and 
$$g(\theta, \eta) = \Vert D_{r, z} u(\theta+r, \eta+  z)\Vert _{p}.$$
\end{proof}

For later use we also record the following simple technical fact. 
\begin{lemma}
\label{lma:2q-integral}
Suppose $2q \in \left(1,\frac{2d}{2d-\alpha} \wedge \frac{d+\alpha}{d}\right)$. Then
\[
\int_{\R^d} G^{\frac 1{2q}}_{ \alpha} (r-s, \eta) d\eta = C (r-s) ^{\frac{\kappa}{2}},
\]
where $\kappa$ is defined in  (\ref{kappa}).
\end{lemma}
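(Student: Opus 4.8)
The plan is to compute the integral directly using the scaling property \eqref{scale} of the fractional Green kernel. First I would write $\tau = r-s>0$ and apply \eqref{scale} to get
\[
G_{\alpha}(\tau,\eta) = \tau^{-\frac{d}{\alpha}} G_{\alpha}\bigl(1, \tau^{-\frac{1}{\alpha}}\eta\bigr),
\]
so that
\[
G_{\alpha}^{\frac{1}{2q}}(\tau,\eta) = \tau^{-\frac{d}{2q\alpha}} G_{\alpha}^{\frac{1}{2q}}\bigl(1, \tau^{-\frac{1}{\alpha}}\eta\bigr).
\]
Then I would integrate over $\eta \in \R^d$ and perform the change of variables $\zeta = \tau^{-\frac{1}{\alpha}}\eta$, whose Jacobian contributes a factor $\tau^{\frac{d}{\alpha}}$, yielding
\[
\int_{\R^d} G_{\alpha}^{\frac{1}{2q}}(\tau,\eta)\,d\eta = \tau^{-\frac{d}{2q\alpha} + \frac{d}{\alpha}} \int_{\R^d} G_{\alpha}^{\frac{1}{2q}}(1,\zeta)\,d\zeta = \tau^{\frac{d}{\alpha}\left(1-\frac{1}{2q}\right)} \cdot C,
\]
where $C = \int_{\R^d} G_{\alpha}^{\frac{1}{2q}}(1,\zeta)\,d\zeta$. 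Noting that $\frac{d}{\alpha}\left(1-\frac{1}{2q}\right) = \frac{\kappa}{2}$ by the definition \eqref{kappa} of $\kappa$ finishes the computation.

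The only thing that needs justification is that the constant $C$ is finite, i.e. that $G_{\alpha}^{\frac{1}{2q}}(1,\cdot)$ is integrable on $\R^d$. This is exactly where the hypothesis $2q < \frac{d+\alpha}{d}$ enters: by the two-sided bound \eqref{29m-1}, we have $G_{\alpha}(1,x) \le K_{\alpha}(1+|x|)^{-(d+\alpha)}$, hence
\[
G_{\alpha}^{\frac{1}{2q}}(1,x) \le K_{\alpha}^{\frac{1}{2q}} (1+|x|)^{-\frac{d+\alpha}{2q}},
\]
and $(1+|x|)^{-\frac{d+\alpha}{2q}}$ is integrable on $\R^d$ precisely when $\frac{d+\alpha}{2q} > d$, that is, when $2q < \frac{d+\alpha}{d}$, which holds by assumption. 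So $C \in (0,\infty)$.

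I do not anticipate a genuine obstacle here — this is a routine scaling argument, and the role of the lemma is purely bookkeeping: it isolates the elementary identity that will be invoked repeatedly in the proofs of Theorem \ref{t1.2} and Theorem \ref{t1.4} (and already implicitly in Lemma \ref{ll2}). The one point worth being slightly careful about is keeping track of the sign of the exponent so that the identification with $\frac{\kappa}{2}$ is transparent; since $2q>1$ we indeed have $\frac{\kappa}{2} = \frac{d}{\alpha}\left(1-\frac{1}{2q}\right) > 0$, so the right-hand side vanishes as $r-s \to 0^+$, consistent with $G_{\alpha}(0,\cdot)$ being a Dirac mass.
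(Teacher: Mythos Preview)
Your proposal is correct and follows essentially the same approach as the paper: apply the scaling property \eqref{scale} to extract the factor $(r-s)^{\kappa/2}$, then use the bound \eqref{29m-1} together with the hypothesis $2q<\frac{d+\alpha}{d}$ to verify that $\int_{\R^d} G_\alpha^{1/(2q)}(1,\zeta)\,d\zeta<\infty$.
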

\begin{proof}
By the scaling property \eqref{scale} we get 
\[
\int_{\R^d} G^{\frac 1{2q}}_{ \alpha} (r-s, \eta) d\eta = (r-s) ^{\frac{\kappa}{2}}\int_{\R^d} G^{\frac 1{2q}}_{ \alpha} (1, \eta) d\eta
\]
where, by \eqref{29m-1},
$$
\int_{\R^d} G^{\frac 1{2q}}_{ \alpha} (1, \eta) d\eta \leq C \int_{\R^d} \left(1+|\eta|\right)^{-\frac{d+\alpha}{2q}}d\eta < \infty
$$
since $\frac{d+\alpha}{2q}>d$.
\end{proof}

\subsection{Asymptotic behavior of the covariance}
\label{subsec:renormalization}

Let us use the following notation. For fixed $t>0,$ we define
\begin{equation}\label{not1}
G_{R}(t):=\int_{B_R}\left[ u(t,x)-1\right]dx  \mbox{ and } \varphi_{R} (t, y):=\int_{B_R} G_{\alpha} (t, x-y) dx .
\end{equation}
The constant $k_\beta$, for $\beta \le d$, is defined by
\begin{equation}\label{not2}
k_d = |B_1| \mbox{ and } k_{\beta}:=\int_{B_1^2} |x-x'|^{-\beta} dx dx',\quad \beta<d.
\end{equation}
Set
\begin{equation}\label{psi}
\Psi(s,  z) = \E [\sigma (u(s,  0)) \sigma (u(s,  z))]
\end{equation}
and
\begin{equation}\label{theta}
\theta_{\alpha} (s)=\E[\sigma(u(s,y))].
\end{equation}
When $\beta=d$, we put
\begin{equation}\label{psi-finite-meas}
\nu_{\alpha}(s) = \left(\int_{\RR^d} \Psi(s,z)d\mu(z)\right)^{\frac{1}{2}}
\end{equation}
and following lemma justifies the fact that $\nu_\alpha$ is well-defined. The proof is postponed to the end of this subsection. 
\begin{lemma}
\label{lemma:nu-okay}
Suppose that $\gamma$ satisfies Assumption \ref{assumption:main} with $\beta=d$ and let $\Psi$ be given by \eqref{psi}. Then for every $s\geq 0$ we have
$$
\int_{\RR^d} \Psi(s,z)d\mu(z) \geq 0.
$$
In particular, $\nu_\alpha$ given by \eqref{psi-finite-meas} is well-defined. Moreover, for every $s\in [0,T]$ we have
$$
\nu^2_\alpha(s) \geq \theta_\alpha^2(s).
$$
\end{lemma}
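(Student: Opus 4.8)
The plan is to establish the nonnegativity of $\int_{\RR^d}\Psi(s,z)\,d\mu(z)$ by writing it as a limit (or an average) of manifestly nonnegative quantities built out of the noise covariance, and then to deduce the lower bound $\nu_\alpha^2(s)\ge\theta_\alpha^2(s)$ by a Jensen-type argument against the measure $\mu$. First I would recall that $\gamma = I_{d-\beta}\mu$ with $\beta=d$ means simply $\gamma=\mu$ (since $I_{0}$ is the identity operator, per the convention fixed earlier in the excerpt), so $\mu$ is a finite, symmetric, nonnegative-definite measure on $\RR^d$. The key structural fact is that $\Psi(s,z)=\E[\sigma(u(s,0))\sigma(u(s,z))]$ is itself a nonnegative-definite function of $z$: for any test function $\phi\in C_c^\infty(\RR^d)$,
\[
\int_{\RR^{2d}}\phi(x)\phi(y)\Psi(s,x-y)\,dx\,dy
= \E\Big[\Big(\int_{\RR^d}\phi(x)\sigma(u(s,x))\,dx\Big)^2\Big]\ge 0,
\]
where I have used the spatial stationarity of $u(s,\cdot)$ (established in the proof of Theorem \ref{t4.2}) to write $\Psi(s,x-y)=\E[\sigma(u(s,x))\sigma(u(s,y))]$.

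Next I would combine this with the nonnegative-definiteness of $\mu$. Since $\mu$ is a nonnegative-definite measure, its Fourier transform $\widehat\mu$ is a nonnegative measure, and pairing a nonnegative-definite function against a nonnegative-definite measure yields a nonnegative number: formally, $\int \Psi(s,z)\,d\mu(z) = \int \widehat{\Psi}(s,\cdot)\,d\widehat\mu \ge 0$ (up to the usual normalizing constant), using that $\widehat\Psi(s,\cdot)\ge 0$ by Bochner's theorem applied to the nonnegative-definite function $\Psi(s,\cdot)$. To make this rigorous one should approximate $\mu$ by $\mu\ast\rho_\varepsilon$ for a Gaussian mollifier $\rho_\varepsilon$ (keeping nonnegative-definiteness) so that all Fourier transforms are genuine functions, apply Parseval, and pass to the limit using that $\Psi(s,\cdot)$ is bounded and continuous and $\mu$ is finite. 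This shows $\int_{\RR^d}\Psi(s,z)\,d\mu(z)\ge 0$, so $\nu_\alpha(s)$ is well-defined.

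For the lower bound, write $\theta_\alpha(s)=\E[\sigma(u(s,y))]$, which is independent of $y$ by stationarity. I would like to say $\int\Psi(s,z)\,d\mu(z)\ge\theta_\alpha^2(s)$, but note $\mu$ need not be a probability measure, so the right normalization matters; however the statement as written is $\nu_\alpha^2(s)\ge\theta_\alpha^2(s)$, i.e. $\int\Psi(s,z)\,d\mu(z)\ge\theta_\alpha^2(s)$. The natural route is: $\Psi(s,z)-\theta_\alpha^2(s)=\mathrm{Cov}(\sigma(u(s,0)),\sigma(u(s,z)))=\E[\bar\sigma(u(s,0))\bar\sigma(u(s,z))]$ where $\bar\sigma(u(s,\cdot)):=\sigma(u(s,\cdot))-\theta_\alpha(s)$ is again a stationary centered field with nonnegative-definite covariance; hence by the same Bochner/mollification argument $\int_{\RR^d}[\Psi(s,z)-\theta_\alpha^2(s)]\,d\mu(z)\ge 0$ — but this only gives $\int\Psi\,d\mu \ge \theta_\alpha^2(s)\,\mu(\RR^d)$, which is $\ge\theta_\alpha^2(s)$ precisely when $\mu(\RR^d)\ge 1$. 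I expect this normalization point to be the main subtlety: one must check that in the relevant cases (the space-time white noise $\mu=\delta_0$, or $\gamma\in L^r$) the measure $\mu$ is normalized so that $\mu(\RR^d)\ge 1$, or else re-read the statement as an assertion about $\nu_\alpha^2(s)$ relative to the constants $k_\beta$ appearing in \eqref{not2} — in any event, the inequality $\nu_\alpha^2(s)\ge\theta_\alpha^2(s)$ should follow from the covariance decomposition above once the bookkeeping on $\mu(\RR^d)$ is done, and this is the step I would be most careful about; everything else is the soft Bochner-plus-stationarity argument.
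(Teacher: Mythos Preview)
Your approach is essentially the same as the paper's, and in fact cleaner. Both arguments rest on the same two ingredients: (a) the function $z\mapsto \Psi(s,z)-\theta_\alpha^2(s)={\rm Cov}\big(\sigma(u(s,0)),\sigma(u(s,z))\big)$ is nonnegative-definite by stationarity, and (b) $\mu=\gamma$ is nonnegative-definite; pairing them gives $\int T(s,z)\,d\mu(z)\ge 0$. You phrase this via Bochner/Parseval with mollification; the paper phrases it as ``the product $T(s,y)\gamma(y)$ is again a covariance, hence $\widehat{T\gamma}(0)\ge 0$''. The paper additionally routes the identities through the $R\to\infty$ limits of Lemma~\ref{pro:correct-limit} and Theorem~\ref{pp2}, which is unnecessary: the decomposition $\nu_\alpha^2(s)=\int T(s,z)\,d\mu(z)+\mu(\R^d)\,\theta_\alpha^2(s)$ holds directly by definition, as you observe.

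Your caution about the factor $\mu(\R^d)$ is well placed. Both your argument and the paper's actually yield
\[
\nu_\alpha^2(s)\;\ge\;\mu(\R^d)\,\theta_\alpha^2(s),
\]
not $\nu_\alpha^2(s)\ge\theta_\alpha^2(s)$; the paper's displayed identity at the end of its proof drops the $\mu(\R^d)$ without comment. The unnormalized inequality is in fact false when $\mu(\R^d)<1$: at $s=0$ one has $\Psi(0,z)\equiv\sigma(1)^2$, so $\nu_\alpha^2(0)=\mu(\R^d)\sigma(1)^2<\sigma(1)^2=\theta_\alpha^2(0)$. For the only place the lemma is used (the remark after Theorem~\ref{t1.4}, to ensure the limiting variance is nondegenerate when $\sigma(1)\neq 0$), the version with $\mu(\R^d)$ is equally sufficient, so nothing downstream breaks. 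You have correctly identified both the argument and its one genuine subtlety.
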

The following theorem provides us the correct renormalization as well as the limiting covariance.
\begin{theorem}\label{pp2}
Suppose that $\gamma$ satisfies Assumption \ref{assumption:main}. Then
$$
\lim_{R\to\infty }R^{\beta-2d}\E(G_R(t)  G_{R}(r)) = k_\beta \int_{0}^{t\wedge r}  \left[ \mu(\RR^d)\theta^2_{\alpha}(s)\mathbf{1}_{\beta<d} + \nu^2_{\alpha}(s)\mathbf{1}_{\beta=d} \right]ds.
$$
\end{theorem}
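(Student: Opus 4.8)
The plan is to compute the covariance $\E(G_R(t)G_R(r))$ via the Walsh isometry applied to the Skorokhod representation of $u(t,x)-1$, extract the leading-order term, and control the remainder using the Malliavin derivative bound from Proposition~\ref{prop:malliavin-bound}. Writing $u(t,x)-1 = \delta\!\left(G_\alpha(t-\cdot,x-\cdot)\sigma(u(\cdot,\cdot))\right)$, the isometry \eqref{inte} together with Fubini gives
\[
\E(G_R(t)G_R(r)) = \int_0^{t\wedge r}\!\!\int_{\R^d}\!\int_{\R^d} \varphi_R(t-s,y)\,\varphi_R(r-s,y')\,\E\!\left[\sigma(u(s,y))\sigma(u(s,y'))\right]\gamma(y-y')\,dy\,dy'\,ds,
\]
where $\varphi_R$ is as in \eqref{not1}. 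By spatial stationarity of $u$, $\E[\sigma(u(s,y))\sigma(u(s,y'))] = \Psi(s,y-y')$ with $\Psi$ as in \eqref{psi}. So the whole problem reduces to the asymptotics, as $R\to\infty$, of
\[
J_R(s) := R^{\beta-2d}\int_{\R^d}\int_{\R^d}\varphi_R(t-s,y)\varphi_R(r-s,y')\Psi(s,y-y')\gamma(y-y')\,dy\,dy'.
\]

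First I would handle the ``free'' asymptotics: replace $\Psi(s,y-y')$ heuristically by its value at $y=y'$, which by stationarity is $\theta_\alpha^2(s)$ when $\beta<d$ (the covariance decorrelates on the relevant scale) and is captured by the measure $\mu$ when $\beta=d$. Concretely, change variables $y = Ry_1$, $y' = Ry_1'$ and use $\int_{\R^d}G_\alpha(t-s,z)\,dz = 1$ (eq.~\eqref{density}) to see that $\varphi_R(t-s,Ry_1) \to |B_1|\mathbf 1_{B_1}(y_1)$-type behaviour after the right rescaling; in the Riesz case $\gamma = I_{d-\beta}\mu$ one writes $\gamma(y-y') = (K_{d-\beta}\ast\mu)(y-y')$ and the $R^{\beta-2d}$ scaling together with homogeneity of $K_{d-\beta}$ of degree $-\beta$ produces exactly the constant $k_\beta\,\mu(\R^d)$ from \eqref{not2}. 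When $\beta = d$, the kernel degenerates and one instead uses $\int \varphi_R(t-s,y)\varphi_R(r-s,y)\,dy \sim R^d|B_1|$ (an $L^2$ computation on the indicator of $B_R$), the convolution identity \eqref{cov-col-general}, and dominated convergence against $d\mu(z)$, yielding $k_d\,\nu_\alpha^2(s)$. Lemma~\ref{lemma:nu-okay} guarantees $\nu_\alpha$ is well-defined so the limit makes sense. In both cases the integrand in $s$ is bounded uniformly on $[0,T]$ (moments of $u$ are bounded by \eqref{22i-needed} and $\sigma$ is Lipschitz), so dominated convergence in $s$ upgrades the pointwise-in-$s$ limit to the stated integral over $[0,t\wedge r]$.

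The main obstacle is making the ``replace $\Psi(s,y-y')$ by its diagonal value'' step rigorous, i.e.\ showing
\[
R^{\beta-2d}\int\!\!\int \varphi_R\,\varphi_R\,\big[\Psi(s,y-y')-\Psi(s,0)\big]\gamma(y-y')\,dy\,dy' \longrightarrow 0.
\]
Here one cannot use fine heat-kernel estimates; instead I would bound $|\varphi_R(t-s,y)| \le |B_R| = c R^d$ crudely where helpful and, more sharply, exploit $\int_{\R^d}\varphi_R(\tau,y)\,dy = |B_R|$ and the integrability of $G_\alpha$ together with the continuity of $s\mapsto\Psi(s,\cdot)$ near the diagonal (which follows from $L^p$-continuity of $u$, itself a byproduct of the Picard construction in the proof of Theorem~\ref{t4.2}). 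The key quantitative input controlling the cross terms is Proposition~\ref{p1}: the bound \eqref{eq:key-convolution-bound} lets one estimate $\int f\,(g\ast\gamma)$ by $\|f\|_{L^{2q}}\|g\|_{L^{2q}}$, and combined with the $L^{2q}$-norm of $\varphi_R$ — which scales like a power of $R$ strictly smaller than $R^{2d-\beta}$ because $\kappa<1$ (eq.~\eqref{kappa}) — this shows the off-diagonal contribution is $o(R^{2d-\beta})$. Assembling these pieces and invoking dominated convergence gives the claimed formula. As a side remark the computation also yields $\sigma_R^2 = \E(G_R(t)^2) \sim R^{2d-\beta}$, the normalization used in Theorem~\ref{t1.2}.
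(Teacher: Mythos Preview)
Your overall strategy---compute the covariance via the Walsh isometry, identify a leading term, and show the remainder is negligible---matches the paper. But there are two genuine problems in the execution for the case $\beta<d$.

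First, a conceptual confusion: you write ``replace $\Psi(s,y-y')$ heuristically by its value at $y=y'$, which by stationarity is $\theta_\alpha^2(s)$''. But $\Psi(s,0)=\E[\sigma^2(u(s,0))]$, whereas $\theta_\alpha^2(s)=[\E\sigma(u(s,0))]^2$; these are not equal. The correct replacement (and what the paper does) is by the \emph{far-field} value $\lim_{|z|\to\infty}\Psi(s,z)=\theta_\alpha^2(s)$, which is precisely Lemma~\ref{sup}. Your parenthetical ``the covariance decorrelates on the relevant scale'' hints at this, but the surrounding sentences about ``value at $y=y'$'' and ``continuity of $s\mapsto\Psi(s,\cdot)$ near the diagonal'' are pointing in the wrong direction; the near-diagonal behavior is irrelevant.

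Second, and more seriously, your proposed remainder control does not give $o(R^{2d-\beta})$. You propose to bound the off-diagonal piece via Proposition~\ref{p1} and the $L^{2q}$-norm of $\varphi_R$, asserting that the latter ``scales like a power of $R$ strictly smaller than $R^{2d-\beta}$ because $\kappa<1$''. But by Young's inequality $\|\varphi_R(\tau,\cdot)\|_{L^{2q}}\le \|\mathbf 1_{B_R}\|_{L^{2q}}=CR^{d/(2q)}$, so Proposition~\ref{p1} yields a bound of order $R^{d/q}$. In Case~(i) of Assumption~\ref{assumption:main} one has $2q=\frac{2d}{2d-\beta}$, hence $d/q=2d-\beta$ exactly---the same order as the main term, not smaller. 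The condition $\kappa<1$ plays no role here. Without extra input, this approach cannot show the remainder vanishes after renormalization.

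The missing ingredient is the decorrelation Lemma~\ref{sup}: one proves $\sup_{s\le t}|\Psi(s,z)-\theta_\alpha^2(s)|\to 0$ as $|z|\to\infty$ via the Clark--Ocone formula and the Malliavin derivative bound of Proposition~\ref{prop:malliavin-bound} (so the Malliavin bound enters here, not through $L^{2q}$-scaling of $\varphi_R$). With this in hand, the paper splits the remainder into the region $|y-y'|\le K$ (where $\Psi-\theta_\alpha^2$ is merely bounded but the volume contribution is $O(R^{\beta-d})=o(1)$) and the region $|y-y'|>K$ (where $|\Psi-\theta_\alpha^2|<\varepsilon$ and the integral is $O(\varepsilon)$ uniformly in $R$). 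For the main term itself, the paper uses a Fourier/Plancherel computation with Bessel functions (Lemma~\ref{pro:correct-limit}) rather than a direct scaling argument, which cleanly handles the $\varphi_R$ convolutions and identifies the constant $k_\beta$. Your treatment of the case $\beta=d$ is closer to correct: there the paper indeed absorbs $\Psi(s,\cdot)$ into a finite measure $\tilde\gamma_s(dy)=\Psi(s,y)\gamma(dy)$ and reruns the Fourier argument, with no separate remainder analysis needed.
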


Before we proceed to the proof of Theorem \ref{pp2}, we present a couple of technical lemmas.
\begin{lemma}
\label{pro:correct-limit}
Suppose that $\gamma$ satisfies Assumption \ref{assumption:main}. Then for any bounded function $s\mapsto \theta(s)$ we have, as $R\to \infty$,
$$
R^{\beta-2d}\int_0^t \theta(s) \int_{\RR^{2d}} \varphi_R(t-s,y)\varphi_R(t-s,y')\gamma(y-y')dy'dyds \to k_{\beta}\mu(\RR^d)\int_0^t \theta(s)ds,
$$
where $k_\beta$ is defined in \eqref{not2}.
\end{lemma}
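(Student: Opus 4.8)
The plan is to collapse the double spatial integral into a single convolution via the semigroup property \eqref{semig} of $G_\alpha$, rescale the ball $B_R$ to $B_1$, and then pass to the limit with two applications of dominated convergence, one in the space variable and one in the time variable $s$. Throughout, all integrands are nonnegative (the kernel $G_\alpha$ is positive and $\gamma=I_{d-\beta}\mu$ is a nonnegative measure), so Tonelli's theorem may be used without comment.

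First I would record that, by the symmetry of $G_\alpha$ in space, $\varphi_R(\tau,\cdot)=\mathbf 1_{B_R}\ast G_\alpha(\tau,\cdot)$. Fixing $s$ and writing $\tau=t-s$, I would substitute $a=x-y$, $a'=x'-y'$ in the inner integral and use the semigroup identity $G_\alpha(\tau,\cdot)\ast G_\alpha(\tau,\cdot)=G_\alpha(2\tau,\cdot)$ to obtain
\[
\int_{\RR^{2d}}\varphi_R(\tau,y)\varphi_R(\tau,y')\gamma(y-y')\,dy\,dy' = \int_{B_R}\int_{B_R}\bigl(G_\alpha(2\tau,\cdot)\ast\gamma\bigr)(x-x')\,dx\,dx'.
\]
Rescaling $x=Rp$, $x'=Rp'$ then converts the prefactor $R^{\beta-2d}$ into
\[
R^{\beta-2d}\int_{\RR^{2d}}\varphi_R(\tau,y)\varphi_R(\tau,y')\gamma(y-y')\,dy\,dy' = R^{\beta}\int_{B_1}\int_{B_1}\bigl(G_\alpha(2\tau,\cdot)\ast\gamma\bigr)(R(p-p'))\,dp\,dp'.
\]

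The core of the argument is the limit of the right-hand side for fixed $s$. Set $h_\tau:=G_\alpha(2\tau,\cdot)\ast\mu$; by \eqref{density} and Tonelli, $h_\tau$ is a nonnegative integrable function with $\int_{\RR^d}h_\tau(w)\,dw=\mu(\RR^d)<\infty$. I would treat the two regimes of Assumption \ref{assumption:main} in parallel. If $\beta<d$, associativity of convolution gives $G_\alpha(2\tau,\cdot)\ast\gamma=I_{d-\beta}h_\tau$, and absorbing $R^\beta$ into the factor $|R(p-p')-w|^{-\beta}=R^{-\beta}|(p-p')-w/R|^{-\beta}$ rewrites the displayed quantity as $\int_{\RR^d}h_\tau(w)\,\Phi(w/R)\,dw$ with $\Phi(u)=\int_{B_1^2}|p-p'-u|^{-\beta}\,dp\,dp'$. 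If $\beta=d$, then $\gamma=\mu$ and $I_{d-\beta}$ is the identity, and the elementary identity $\int_{B_1^2}f(p-p')\,dp\,dp'=\int_{\RR^d}f(z)\,|B_1\cap(B_1+z)|\,dz$ followed by the scaling $w=Rz$ yields the same form $\int_{\RR^d}h_\tau(w)\,\Phi(w/R)\,dw$, now with $\Phi(z)=|B_1\cap(B_1+z)|$. In both cases $\Phi$ is bounded (for $\beta<d$ because $|\cdot|^{-\beta}$ is locally integrable and $\Phi$ is, near the origin, an $L^1\ast L^\infty$ convolution), continuous at the origin, and satisfies $\Phi(0)=k_\beta$ by \eqref{not2}. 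Dominating the integrand by $\|\Phi\|_\infty\,h_\tau\in L^1(\RR^d)$, dominated convergence in $w$ gives, for each fixed $s\in(0,t)$,
\[
\lim_{R\to\infty}R^{\beta-2d}\int_{\RR^{2d}}\varphi_R(t-s,y)\varphi_R(t-s,y')\gamma(y-y')\,dy\,dy'=k_\beta\,\mu(\RR^d).
\]

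Finally I would multiply by $\theta(s)$ and integrate over $[0,t]$. The second dominated-convergence step is essentially free: the inner integral is bounded by $\|\Phi\|_\infty\,\mu(\RR^d)$ uniformly in $s$ and $R$, and $\theta$ is bounded, so the integrand in $s$ is dominated by a constant; hence
\[
R^{\beta-2d}\int_0^t\theta(s)\int_{\RR^{2d}}\varphi_R(t-s,y)\varphi_R(t-s,y')\gamma(y-y')\,dy'\,dy\,ds\ \longrightarrow\ k_\beta\,\mu(\RR^d)\int_0^t\theta(s)\,ds.
\]
This argument uses only the positivity, spatial symmetry, semigroup property, and unit mass of $G_\alpha$, so it is structurally the same as the corresponding step for $\alpha=2$ in \cite{HNVZ19}. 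Nothing in it is deep; the points that require attention are the case distinction $\beta<d$ versus $\beta=d$ (whether $I_{d-\beta}$ is a genuine Riesz potential or the identity) and, when $\beta<d$, checking that $\Phi$ is bounded and continuous at the origin, which is a routine consequence of $\beta<d$. Keeping track of the powers of $R$ produced by the two rescalings is the only other bookkeeping hazard.
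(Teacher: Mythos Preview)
Your argument is correct and takes a genuinely different route from the paper's proof. The paper passes to Fourier space via Plancherel's theorem: it writes the inner integral as $\frac{c_{d,\beta}}{(2\pi)^d}\int_{\R^d}|\widehat{\varphi}_R(t-s,\cdot)(\xi)|^2|\xi|^{\beta-d}\widehat{\mu}(\xi)\,d\xi$, uses the explicit formula $|\widehat{\mathbf 1_{B_R}}(\xi)|^2=(2\pi R)^d|\xi|^{-d}J_{d/2}^2(R|\xi|)$, rescales $\xi\to\xi/R$, and then applies dominated convergence together with the known asymptotics of the Bessel function $J_{d/2}$ near zero and infinity; the constants $k_\beta$ are recovered at the end by reversing the Fourier computation. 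Your approach stays entirely in the spatial domain: collapse the two Green kernels via the semigroup property, rescale $B_R$ to $B_1$, and reduce to $\int_{\R^d}h_\tau(w)\Phi(w/R)\,dw$ with $\Phi$ bounded and continuous at $0$ and $\int h_\tau=\mu(\R^d)$, so a plain dominated-convergence argument finishes. Your proof is more elementary in that it avoids Plancherel and Bessel asymptotics altogether and makes the appearance of $k_\beta$ transparent (it is simply $\Phi(0)$); the paper's Fourier approach, on the other hand, packages both cases $\beta<d$ and $\beta=d$ into a single formula $|\xi|^{\beta-d}\widehat{\mu}(\xi)$ and connects naturally with how the spectral measure is used elsewhere. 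Both are short; yours is arguably closer in spirit to the spatial estimates used in the rest of Section~\ref{sec:clt-proofs}.
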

\begin{proof}
Recall that, writing formally by \eqref{cov-col-general}, we have
\begin{eqnarray*}
&&\int_{\RR^{2d}} \varphi_R(t-s,y)\varphi_R(t-s,y')\gamma(y-y')dy'dy \\
&=& \int_{\RR^{d}} \varphi_R(t-s,y)\left[\varphi_R(t-s,\bullet) \ast I_{d-\beta} \ast \mu\right](y)dy.
\end{eqnarray*}
Since clearly $\varphi_R(t-s,\bullet) \in L^1(\mathbb{R}^d) \cap L^{\infty}(\mathbb{R}^d)$, it follows from Young's inequality \eqref{eq:young-lp-l1} and Hardy-Littlewood-Sobolev's inequality \eqref{eq:HLS-lp-mapping} that $\varphi_R(t-s,\bullet) \ast I_{d-\beta} \ast \mu \in L^2(\mathbb{R}^d)$. Hence we obtain, by taking a Fourier transform and using Plancherel's theorem, that
\begin{align*}
& \int_{\RR^{d}} \varphi_R(t-s,y)\left[\varphi_R(t-s,\bullet) \ast I_{d-\beta} \ast \mu\right](y)dy \\
& \qquad = \frac {c_{d,\beta} } {(2\pi)^d}  \int_{\RR^d} \left|[\widehat{\varphi}_R(t-s,\bullet)](\xi)\right|^2 |\xi|^{\beta-d}\widehat{\mu}(\xi)d\xi,
\end{align*}
where $c_{d,\beta} =1$ for $\beta=d$.
By recalling that 
$$
\left| \int_{B_R} e^{-i\langle x,\xi\rangle}dx\right|^2 = (2\pi R)^d|\xi|^{-d}J_{\frac{d}{2}}^2(R|\xi|),
$$
where $J_{\frac d2}$ denotes the Bessel function of the first kind of order $d/2$, we obtain
\begin{equation*}
\left|[\widehat{\varphi}_R(t-s,\bullet)](\xi)\right|^2 = (2\pi R)^d|\xi|^{-d}J_{\frac{d}{2}}^2(R|\xi|) e^{-2(t-s)|\xi|^{\alpha}}
\end{equation*}
leading to 
\begin{eqnarray*}
&& \frac {c_{d,\beta}} {(2\pi)^d}\int_{\RR^d} \left|[\widehat{\varphi}_R(t-s,\bullet)](\xi)\right|^2 |\xi|^{\beta-d}\widehat{\mu}(\xi)d\xi \\
&=& c_{d,\beta}  \int_{\RR^d} R^d|\xi|^{-d}J_{\frac{d}{2}}^2(R|\xi|) e^{-2(t-s)|\xi|^{\alpha}} |\xi|^{\beta-d}\widehat{\mu}(\xi)d\xi \\
&=&  c_{d,\beta} R^{2d-\beta}\int_{\RR^d} |\xi|^{-d}J_{\frac{d}{2}}^2(|\xi|) e^{-2(t-s)R^{-\alpha}|\xi|^{\alpha}} |\xi|^{\beta-d}\widehat{\mu}\left(\frac{\xi}{R}\right)d\xi.
\end{eqnarray*}
Since $\widehat{\mu} \in L^{\infty}(\RR^d)$, we have $\sup_{R>0}e^{-2(t-s)R^{-\alpha}|\xi|^{\alpha}}\widehat{\mu}\left(\frac{\xi}{R}\right) < \infty$. Moreover, since $J_{\frac{d}{2}}^2(|\xi|) = O(|\xi|)$ as $|\xi|\to \infty$ and $J_{\frac{d}{2}}^2(|\xi|) \sim c_d|\xi|^d$ as $|\xi|\to 0$ (here we have used standard Landau notation $O(|\xi|)$ and $f\sim g$ if $\frac{f}{g} \to 1$), we have  
$
\int_{\RR^d} |\xi|^{\beta-2d}J_{\frac{d}{2}}^2(|\xi|)d\xi < \infty.
$
This, together with the boundedness of $\theta(s)$, allows us to use the dominated convergence theorem and therefore, as $R\to \infty$,
\begin{eqnarray*}
&& R^{\beta-2d}\int_0^t \theta(s) \int_{\RR^{2d}} \varphi_R(t-s,y)\varphi_R(t-s,y')\gamma(y-y')dy'dyds \\
&\to &  c_{d,\beta} \int_0^t \theta(s) ds \int_{\RR^d} |\xi|^{\beta-2d}J_{\frac{d}{2}}^2(|\xi|) \widehat{\mu}(0)d\xi.
\end{eqnarray*}
The result now follows from $\widehat{\mu}(0) = \mu(\RR^d)$ together with the fact that 
\begin{equation}
\label{eq:beta-riesz-constant}
c_{d,\beta} \int_{\RR^d} |\xi|^{\beta-2d}J_{\frac{d}{2}}^2(|\xi|) d\xi = \int_{B_1^2} |x_1-x_2|^{-\beta} dx_1dx_2
\end{equation}
for $\beta<d$ and, for $\beta=d$, we have
\begin{equation}
\label{eq:betad-constant}
\int_{\RR^d} |\xi|^{-d}J_{\frac{d}{2}}^2(|\xi|) d\xi = |B_1|.
\end{equation}
Indeed, the validity of \eqref{eq:betad-constant} can be seen from
\begin{equation*}
\int_{\RR^d}\textbf{1}_{B_1}(x) dx = \int_{\RR^d}\mathbf{1}^2_{B_1}(x) dx = \frac 1{(2\pi)^d}  \int_{\RR^d} \left|\widehat{\mathbf{1}_{B_1}}(\xi)\right|^2d\xi 
= \int_{\RR^d} |\xi|^{-d}J_{\frac{d}{2}}^2(|\xi|) d\xi
\end{equation*}
while the validity of \eqref{eq:beta-riesz-constant} can be seen from 
\begin{eqnarray*}
\int_{B_1^2}|x_1-x_2|^{-\beta} dx_1dx_2 &=& \int_{\RR^{d}} \mathbf{1}_{B_1}(x_1) \left[I_{d-\beta}\mathbf{1}_{B_1}\right](x_1) dx_1 \\
&=& \frac {c_{d,\beta}} { (2\pi)^{d}} \int_{\RR^d} \left|\widehat{\mathbf{1}_{B_1}}(\xi)\right|^2|\xi|^{\beta-d}d\xi \\
&=& c_{d,\beta}\int_{\RR^d} |\xi|^{\beta-2d}J_{\frac{d}{2}}^2(|\xi|) d\xi.
\end{eqnarray*}
This completes the proof.
\end{proof}

\begin{lemma}\label{sup}
Suppose that $\gamma$ satisfies Assumption \ref{assumption:main} and $\beta <d$. Then
\begin{equation*}
\lim_{\vert z\vert \to \infty }\sup_{0\leq s\leq t} \left| \Psi (s,  z) -\theta^2_{\alpha} (s) \right|=0.
\end{equation*}
\end{lemma}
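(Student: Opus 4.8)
Since $\beta<d$, we are necessarily in Case~(i) of Assumption~\ref{assumption:main}, so $\gamma=I_{d-\beta}\mu$ with $0<\beta<\alpha\wedge d$ and $\mu$ a finite symmetric measure; the exponent $2q$ from Proposition~\ref{p1} then satisfies $\kappa<1$ and $2q<\frac{d+\alpha}{d}$. The plan is to recognize, using the spatial stationarity of $u$ established in the proof of Theorem~\ref{t4.2} (which gives $\E[\sigma(u(s,x))]=\theta_\alpha(s)$ for every $x$), that
\[
\Psi(s,z)-\theta_\alpha^2(s)=\mathrm{Cov}\big(\sigma(u(s,0)),\sigma(u(s,z))\big),
\]
and to control this covariance with Malliavin calculus. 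Both random variables belong to $\mathbb D^{1,2}$ since $u(s,\cdot)\in\mathbb D^{1,p}$ for every $p$ (Proposition~\ref{pp11}) and $\sigma$ is Lipschitz. Writing $\sigma(u(s,z))-\theta_\alpha(s)=\delta(v^z)$ with $v^z_{r,w'}=\E[D_{r,w'}\sigma(u(s,z))\mid\mathcal F_r]$ (Clark--Ocone formula) and applying the duality~\eqref{eq: duality formula}, I obtain, using that $\sigma(u(s,z))$ is $\mathcal F_s$-measurable,
\[
\Psi(s,z)-\theta_\alpha^2(s)=\E\int_0^s\!\!\int_{\R^{2d}}D_{r,w}\sigma(u(s,0))\;\E\big[D_{r,w'}\sigma(u(s,z))\mid\mathcal F_r\big]\,\gamma(w-w')\,dw\,dw'\,dr.
\]

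Next I would estimate the integrand. By the chain rule for the Lipschitz function $\sigma$, $|D_{r,w}\sigma(u(s,x))|\le C\,|D_{r,w}u(s,x)|$; then the Cauchy--Schwarz inequality in $\omega$, the $L^2$-contractivity of conditional expectation, Proposition~\ref{prop:malliavin-bound} with $p=2$, and $\gamma\ge0$ give
\[
\big|\Psi(s,z)-\theta_\alpha^2(s)\big|\le C\int_0^s(s-r)^{-\kappa}\!\int_{\R^{2d}}G_\alpha^{\frac1{2q}}(s-r,w)\,G_\alpha^{\frac1{2q}}(s-r,z-w')\,\gamma(w-w')\,dw\,dw'\,dr.
\]
A change of variables together with the symmetry of $G_\alpha$ and $\gamma$ turns the inner integral into $\Phi_z(s-r):=\big(G_\tau\ast G_\tau\ast\gamma\big)(z)$ with $G_\tau:=G_\alpha^{1/2q}(\tau,\cdot)$, so, substituting $\tau=s-r$ and using $s\le t$ with a nonnegative integrand,
\[
\sup_{0\le s\le t}\big|\Psi(s,z)-\theta_\alpha^2(s)\big|\le C\int_0^t\tau^{-\kappa}\,\Phi_z(\tau)\,d\tau .
\]

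It remains to prove that the last integral tends to $0$ as $|z|\to\infty$, which I would deduce by dominated convergence in $\tau$. Writing $\gamma=K_{d-\beta}\ast\mu$ gives $\Phi_z(\tau)=\int_{\R^d}\big(I_{d-\beta}(G_\tau\ast G_\tau)\big)(z-y)\,d\mu(y)$. Because $2q<\frac{d+\alpha}{d}$, both $G_\alpha^{1/2q}(1,\cdot)$ and $G_\alpha^{1/q}(1,\cdot)$ are integrable, so $G_\tau\in L^1\cap L^2$ with $\|G_\tau\|_{L^1}^2=C\tau^{\kappa}$ (Lemma~\ref{lma:2q-integral}) and $\|G_\tau\|_{L^2}^2=C\tau^{\frac d\alpha(1-\frac1q)}$ by the scaling~\eqref{scale}; hence $h_\tau:=G_\tau\ast G_\tau$ is continuous, lies in $L^1\cap L^\infty$, and vanishes at infinity. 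Splitting $I_{d-\beta}h_\tau(x)$ over $\{|x-y|<\rho\}$ and $\{|x-y|\ge\rho\}$ and optimizing in $\rho>0$ (this is where $\beta<d$ is used) yields $\|I_{d-\beta}h_\tau\|_{L^\infty}\le C\,\|h_\tau\|_{L^\infty}^{\beta/d}\|h_\tau\|_{L^1}^{(d-\beta)/d}$, which the two scaling estimates evaluate to $C\tau^{\kappa-\beta/\alpha}$; the same splitting together with the decay of $h_\tau$ shows $I_{d-\beta}h_\tau(x)\to0$ as $|x|\to\infty$. Consequently, for each fixed $\tau>0$, dominated convergence and the finiteness of $\mu$ give $\Phi_z(\tau)\to0$ as $|z|\to\infty$, whereas $\tau^{-\kappa}\Phi_z(\tau)\le C\mu(\R^d)\,\tau^{-\beta/\alpha}$ is integrable on $[0,t]$ exactly because $\beta<\alpha$. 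A final application of dominated convergence then completes the proof.

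The main obstacle is the exponent bookkeeping near $\tau=0$: one has to produce a bound $\Phi_z(\tau)\le C\tau^{\kappa-\beta/\alpha}$ so that the singularity $\tau^{-\kappa}$ inherited from the Malliavin estimate is absorbed, and this works precisely under the Case~(i) hypothesis $\beta<\alpha$. A secondary technical point is checking that $I_{d-\beta}(G_\tau\ast G_\tau)$ is simultaneously uniformly bounded in $z$ and vanishing at infinity --- which is what legitimizes the dominated-convergence argument and relies on $\beta<d$ together with the $L^1\cap L^2$ bounds on $G_\tau$.
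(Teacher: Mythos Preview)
Your argument is correct and follows the same overall route as the paper: both proofs express $\Psi(s,z)-\theta_\alpha^2(s)$ as a covariance, apply the Clark--Ocone formula together with the Malliavin derivative bound of Proposition~\ref{prop:malliavin-bound}, and arrive at the same quantity
\[
T_1(s,z)=C\int_0^s(s-r)^{-\kappa}\int_{\R^{2d}}G_\alpha^{\frac{1}{2q}}(s-r,w)\,G_\alpha^{\frac{1}{2q}}(s-r,z-w')\,\gamma(w-w')\,dw\,dw'\,dr.
\]

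The only genuine difference is in how the limit $T_1\to 0$ as $|z|\to\infty$ is established. The paper writes out $\gamma=K_{d-\beta}\ast\mu$, changes variables, observes pointwise decay of $|y-y'-\xi-\xi'-w|^{-\beta}$, and then justifies the limit by a uniform-integrability argument: it replaces $\beta$ by a slightly larger exponent $\beta'>\beta$ and checks finiteness via H\"older and the Hardy--Littlewood--Sobolev inequality. You instead recognise the inner integral as the convolution $\Phi_z(\tau)=\big(I_{d-\beta}(G_\tau\ast G_\tau)\big)\ast\mu(z)$, use the elementary $L^1$--$L^\infty$ splitting of the Riesz potential to obtain the uniform-in-$z$ bound $\Phi_z(\tau)\le C\,\mu(\R^d)\,\tau^{\kappa-\beta/\alpha}$ (the exponent computation is right for general $q$), and then apply dominated convergence twice. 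Your route is slightly more elementary in that it avoids Hardy--Littlewood--Sobolev here; the paper's uniform-integrability trick is shorter once HLS is available. Both exploit $\beta<d$ for the Riesz integrability near the origin and $\beta<\alpha$ for the time integrability near $\tau=0$.
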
  

\begin{proof}
As in the proof of Theorem 3.1 in \cite {HNVZ19}, we can write, via the Clark-Ocone formula,
\begin{equation*}
\Psi (s,  y- y' ) - \theta^2_{\alpha} (s)= T(s,  y,  y' ), 
\end{equation*}
where 
\begin{equation*}
\vert T(s,  y,  y' )\vert \leq C \int_{0} ^ {s} \int_{\mathbb{R}^{2d} }    \Vert D_{r,  z} u(s,  y) \Vert _{2}  \Vert D_{r,  z'} u(s,  y') \Vert _{2}\gamma(z-z')dz'dzdr.
\end{equation*}
Hence, by applying Proposition \ref{pp11}, we obtain the estimate
\begin{eqnarray*}
\vert T(s,  y,  y' )\vert &\leq &C \int_{0} ^ {s}(s-r) ^{-\kappa}  \int_{\mathbb{R}^{2d} }     G_\alpha^{\frac 1{2q}} (s-r,  y- z) G_\alpha^{\frac 1{2q}} (s-r,  y'- z')\\
&&\times \gamma(z-z')dz'dzdr = : T_1(s,y,y').
\end{eqnarray*}
We prove the claim by an argument based on uniform integrability.
We know that  $\gamma = K_{d-\beta} \ast \mu$. Therefore,
\begin{eqnarray*}
T_1(s,  y,  y' )
   & =& C \int_{0} ^ {s}(s-r) ^{-\kappa}  \int_{ \R^{3d} }     G_\alpha^{\frac 1{2q}} (s-r,  y- z) 
G_\alpha^{\frac 1{2q}} ( s-r,  y'- z')\\
&&\times \vert  z- z'-w\vert ^{- \beta}dz'dz d\mu(w)dr,
\end{eqnarray*}
where $2q= \frac {2d}{2d-\beta}$.
Making the change of variables $u=s-r$, $\xi=y-z$ and $\xi'=y-z'$, we can write
\begin{eqnarray*}
T_1(s,  y,  y' )  & =& C \int_{0} ^ {s} u ^{-\kappa}  \int_{ \R^{3d} }     G_\alpha^{\frac 1{2q}} (u,  \xi) 
G_\alpha^{\frac 1{2q}} ( u,  \xi')\\
&&\times \vert  y-y'-\xi-\xi' -w\vert ^{- \beta}d\xi'd\xi d\mu(w)du.
\end{eqnarray*}
For any fixed $\xi, \xi' ,w \in \R^d$,  clearly,  $\vert  y-y'-\xi-\xi' -w\vert ^{- \beta}$ tends to zero as $|y-y'|$ tends to infinity.
Taking into account that
\[
 \int_{0} ^ {s} u ^{-\kappa}  \int_{ \R^{3d} }     G_\alpha^{\frac 1{2q}} (u,  \xi) 
G_\alpha^{\frac 1{2q}} ( u,  \xi') d\xi'd\xi d\mu(w)du <\infty,
\]
to show that  $\lim_{|y-y'| \rightarrow \infty} T_1(s,  y,  y' ) =0$, it suffices to check that
\[
I:= \int_{0} ^ {s} u ^{-\kappa}  \int_{\R^{3d}  }     G_\alpha^{\frac 1{2q}} (u,  \xi) 
G_\alpha^{\frac 1{2q}} ( u,  \xi') \vert  y-y'-\xi-\xi' -w\vert ^{- \beta'}d\xi'd\xi d\mu(w)du<\infty
\]
for some $\beta'>\beta$.  Making a change of variables, we can write
\[
I= \int_{0} ^ {s} u ^{-\kappa}  \int_{\R^{3d}  }     G_\alpha^{\frac 1{2q}} (u,  y-z) 
G_\alpha^{\frac 1{2q}} ( u,  y'-z') \vert   z-z' -w\vert ^{- \beta'}dz'dz d\mu(w)du.
\]
Appying H\"older's and   Hardy-Littlewood-Sobolev's inequality (\ref{eq:HLS-lp-mapping}) yields
\[
I \le C \int_{0} ^ {s} u ^{-\kappa} du  \left( \int_{\R^d} G_\alpha^{\frac {2d-\beta}{2d-\beta'}} (u,x) dx \right) ^{\frac {2d-\beta'} d},
\]
which is finite since $\beta'$ is close to $\beta$.
 This concludes the proof.
\end{proof}

\begin{proof}[Proof of Theorem \ref{pp2}] 
For notational simplicity, we only consider the case $r=t$ while the case of general $t,r\in [0,T]$ follows in a similar way. Using \eqref{mild} and \eqref{not1}, we can write 
$$
G_R(t) = \int_0^t \varphi_R(t-s,y)\sigma(u(s,y))W(ds,dy).
$$
Hence, by \eqref{inte}, we get
\begin{eqnarray*}
\E [G^2_{R}(t)  ]&=& \int_{0} ^ {t} \int_{\mathbb{R}^{2d}}  \varphi _{R}(t-s, y) \varphi_{R}(t-s, y' ) \Psi(s, y' -y)\gamma(y-y')dy'dyds.
\end{eqnarray*}
Let us begin with the case $\beta<d$. In view of Lemma \ref{pro:correct-limit} together with the boundedness of $\theta^2_{\alpha} (s)$, it suffices to show that 
\begin{eqnarray}
T_{R} &:=& R^{\beta - 2d} \int_0^t \int_{\mathbb{R}^{2d}}   \left[ \Psi (s, y-y ')-\theta^2_{\alpha} (s) \right] \varphi_{R} (t-s, y) \varphi_{R} (t-s, y') \nonumber\\
&& \phantom{kukkuu}\times   \gamma(y-y')dy'dyds \to 0.\label{L-0}
\end{eqnarray}
Now by  Lemma \ref{sup} we know that for every $\varepsilon >0$ there exists $K>0$ such that, for every $s\in [0, t]$ and every $y, y'$ with $ \vert y- y' \vert \geq K$,
\begin{equation}
\label{eq:small}
\left| \Psi(s, y-y') -\theta^2_{\alpha} (s) \right|\le \varepsilon.
\end{equation}
By using $\gamma = I_{d-\beta} \ast \mu$, we split  
$
T_{R}= T_{R,1}+ T_{R,2},
$
where
\begin{eqnarray*}
T_{R,1} &=& R^{\beta-2d} \int_{0}^{t}  \int_{\mathbb{R}^{3d} } \varphi_{R} (t-s, y) \varphi_{R} (t-s, y')    \left[\Psi (s, y-y ')- \theta^2 _{\alpha}(s) \right] \\
& & \phantom{kukkuu}\times \vert y-y'-w\vert^{-\beta} 1_{\vert y-y '\vert\leq K}d\mu(w) dy'dyds
\end{eqnarray*}
and
\begin{eqnarray*}
T_{R, 2} &=& R^{\beta-2d} \int_{0}^{t}  \int_{\mathbb{R}^{3d}}   \varphi_{R} (t-s, y) \varphi_{R} (t-s, y')    \left[\Psi (s, y-y ')- \theta^2_{\alpha} (s) \right] \\
& & \phantom{kukkuu}\times \vert y-y'-w\vert^{-\beta}1_{\vert y-y '\vert\geq K}d\mu(w)dy'dyds.
\end{eqnarray*}
On the region $|y'-y|\leq K, 0\leq s \leq T$ the quantity $\Psi (s, y-y ')- \theta^2_{\alpha} (s)$ is uniformly bounded. Using also the semigroup property and (\ref{density}) allows us to estimate
\begin{eqnarray*}
T_{R,1} &\le&CR ^{\beta -2d} \int_{0}^{t}  \int_{\mathbb{R}^{3d}} \int_{B_R^2}   G_{ \alpha } (t-s,  x- y) G_{ \alpha} (t-s,  x- y') \\
&&\phantom{kukkuu}\times \vert y-y '-w\vert ^{-\beta}1_{\vert y-y '\vert\leq K} dx'dxdy'dyd\mu(w)ds\\
&=& CR ^{\beta -2d}  \int_{0}^{t}  \int_{\mathbb{R}^{2d}}  \int_{B_R^2}  G_{ \alpha } (2(t-s),  x-x'- y')  
\vert y'-w\vert ^{-\beta}\\
&&\phantom{kukkuu}\times  1_{\vert y '\vert\leq K}dx'dxdy'd\mu(w)ds\\
&\leq &C R ^{\beta -d}\int_{\mathbb{R}^{2d}}  \vert y-w\vert ^{-\beta}1_{\vert y \vert\leq K}dyd\mu(w) \to 0
\end{eqnarray*}
as $R\to \infty$, since clearly here we have
$$
\int_{\mathbb{R}^{2d}}  \vert y-w\vert ^{-\beta}1_{\vert y \vert\leq K}dyd\mu(w) < \infty.
$$
For the term $T_{R,2}$, we apply \eqref{eq:small}  to get
\begin{eqnarray*}
T_{R, 2}
&\le&\varepsilon C_\alpha R ^{\beta -2d} \int_{0}^{t}  \int_{\mathbb{R}^{3d}}   \int_{B_R^2}  
G_{ \alpha} (t-s,  x- y) G_{ \alpha } (t-s,  x'- y') \\
&& \times \vert y- y '-w\vert ^{-\beta}dx'dxdy'dyd\mu(w)ds.
\end{eqnarray*}
The change of variables $x-y= \theta$, $x'-y' =\theta'$, $x_1=R\xi_1$ and $x'= R\xi'$ yields
\begin{align*}
T_{R, 2} &
\le  \varepsilon C_\alpha   \int_{0}^{t}  \int_{\mathbb{R}^{3d}}   \int_{B_1^2}  
G_{ \alpha} (t-s,  \theta) G_{ \alpha } (t-s,  \theta')  \\
& \qquad \times \vert   \xi - \xi' - R^{-1}\theta +R^{-1}\theta' -w\vert ^{-\beta}d\xi'd \xi d\theta d\theta' d\mu(w)ds,
\end{align*}
which is bounded by $C\varepsilon$ because $\sup_{z\in \R^d} \int_{B_1} |y-z| ^{-\beta} dy <\infty$.
 Since $\varepsilon>0$ is arbitrary, the desired limit (\ref{L-0}) follows. This verifies the claim for the case $\beta < d$.

Let next $\beta=d$. Since for a fixed $s>0$, the function $y \mapsto \Psi(s,y)$ is a bounded function and now $\gamma = \mu$ is a finite measure, we may regard $\tilde{\gamma}_s(dy) = \Psi(s,y)\gamma(dy)$ as another finite measure. Thus we may use exactly the same arguments as in the proof of Lemma  \ref{pro:correct-limit} and get
\begin{eqnarray*}
R^{-d}\E [G^2_{R}(t) ] &=& R^{-d}\int_{0} ^ {t} \int_{\mathbb{R}^{2d}}  \varphi _{R}(t-s, y) \varphi_{R}(t-s, y' ) \Psi(s, y' -y)\gamma(y-y')dy'dyds \\
&\to & \int_0^t \widehat{\tilde{\gamma}_s}(0)\int_{\RR^d} |\xi|^{-d}J_{\frac{d}{2}}^2(|\xi|) d\xi ds,
\end{eqnarray*}
where now 
$$
\widehat{\tilde{\gamma}_s}(0) = \int_{\RR^d} \Psi(s,z)d\gamma(z) = \nu_\alpha^2(s).
$$
This verifies the claim for $\beta=d$ as well, and hence the proof is completed.
\end{proof}
We end this subsection by proving Lemma \ref{lemma:nu-okay}.
\begin{proof}[Proof of Lemma \ref{lemma:nu-okay}]
Denote 
$$
T(s,y) := \Psi(s,y) - \theta^2_\alpha(s).
$$
Since $T(s,y)$ is also a bounded function, we may follow the proofs of Theorem \ref{pp2} and Lemma \ref{pro:correct-limit} to obtain 
\begin{eqnarray*}
&& R^{-d}\int_{0} ^ {t} \int_{\mathbb{R}^{2d}}  \varphi _{R}(t-s, y) \varphi_{R}(t-s, y' ) \Psi(s, y' -y)\gamma(y-y')dy'dyds \\
&=& R^{-d}\int_{0} ^ {t} \int_{\mathbb{R}^{2d}}  \varphi _{R}(t-s, y) \varphi_{R}(t-s, y' ) T(s, y' -y)\gamma(y-y')dy'dyds \\
&+& R^{-d}\int_{0} ^ {t} \theta^2_\alpha(s)\int_{\mathbb{R}^{2d}}  \varphi _{R}(t-s, y) \varphi_{R}(t-s, y' ) \gamma(y-y')dy'dyds,
\end{eqnarray*}
where now, as $R\to \infty$,
\begin{eqnarray*}
&&R^{-d}\int_{0} ^ {t} \theta^2_\alpha(s)\int_{\mathbb{R}^{2d}}  \varphi _{R}(t-s, y) \varphi_{R}(t-s, y' ) \gamma(y-y')dy'dyds\\
&\to & \mu\left(\RR^d\right)|B_1|\int_0^t \theta^2_\alpha(s) ds
\end{eqnarray*}
and 
\begin{eqnarray*}
&& R^{-d}\int_{0} ^ {t} \int_{\mathbb{R}^{2d}}  \varphi _{R}(t-s, y) \varphi_{R}(t-s, y' ) T(s, y' -y)\gamma(y-y')dy'dyds \\
&\to |B_1|& \int_0^t \widehat{T(s,\bullet)\gamma(\bullet)}(0) ds.
\end{eqnarray*}
By the very definition, we have
$$
T(s,y) := \Psi(s,y) - \theta^2_\alpha(s) = \text{Cov}\left[\sigma(u(s,y))\sigma(u(s,0))\right]
$$
and since $T(s,y)$ and $\gamma(y)$ are both covariances, they are positive semidefinite. Consequently, the product $T(s,y)\gamma(y)$ is again a covariance. It follows that 
$$
\widehat{T(s,\bullet)\gamma(\bullet)}(\xi) \geq 0
$$
for all $\xi \in \RR^d$ and, in particular, for $\xi = 0$. Now 
\begin{eqnarray*}
\int_0^t \int_{\RR^d} \Psi(s,z)d\mu(z)dt &=& \int_0^t \nu_\alpha^2(s)ds \\
&=& \int_0^t \widehat{T(s,\bullet)\gamma(\bullet)}(0)ds 
+ \int_0^t \theta_\alpha^2(s)ds.
\end{eqnarray*}
The claim follows from this together with the observations $\Psi(0,z)= \theta_\alpha^2(0)$ for all $z\in \RR^d$ and $\widehat{T(s,\bullet)\gamma(\bullet)}(0) \geq 0$ for all $s\geq 0$.
\end{proof}

\subsection{Proof of Theorem \ref{t1.2}}
\label{subsec:clt}

We start with the following result that we will utilise in the case $\beta<d$.
\begin{lemma}\label{2s-1}
Suppose that $0<\beta <\alpha  < 2\wedge d$.
For every $ t>0$ we have
\begin{equation}
\label{2s-2}
  \int_{\mathbb{R}^d}  G_{{\alpha} } (t, x-y) \vert y\vert ^{-\beta} dy   \leq C_{\beta,\alpha} |x| ^{-\beta}.
\end{equation}
\end{lemma}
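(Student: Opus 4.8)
The plan is to reduce to the case $t=1$ by the scaling property \eqref{scale}, and then to prove the $t=1$ estimate by a standard dyadic splitting of the convolution integral according to whether the integration variable is small or large compared to $|x|$.

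First I would change variables via $y = t^{1/\alpha}z$ in the left-hand side of \eqref{2s-2}. Since $G_{\alpha}(t,x-y) = t^{-d/\alpha}G_{\alpha}(1, t^{-1/\alpha}(x-y))$ by \eqref{scale}, while $|y|^{-\beta} = t^{-\beta/\alpha}|z|^{-\beta}$ and $dy = t^{d/\alpha}dz$, the left-hand side becomes $t^{-\beta/\alpha}\int_{\mathbb{R}^d}G_{\alpha}(1, w-z)|z|^{-\beta}dz$ with $w = t^{-1/\alpha}x$. As $t^{-\beta/\alpha}|w|^{-\beta} = |x|^{-\beta}$, it suffices to prove the inequality for $t=1$, namely $\int_{\mathbb{R}^d}G_{\alpha}(1, w-z)|z|^{-\beta}dz \leq C|w|^{-\beta}$ for every $w\neq 0$.

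For the $t=1$ bound I would invoke the decay estimate \eqref{29m-1}, $G_{\alpha}(1,x) \leq K_{\alpha}(1+|x|)^{-(d+\alpha)}$, so that it is enough to control $\int_{\mathbb{R}^d}(1+|w-z|)^{-(d+\alpha)}|z|^{-\beta}dz$. I would then split $\mathbb{R}^d$ into $\{|z|\geq |w|/2\}$ and $\{|z|<|w|/2\}$. On the first region $|z|^{-\beta}\leq 2^{\beta}|w|^{-\beta}$, so the contribution is at most $2^{\beta}|w|^{-\beta}\int_{\mathbb{R}^d}(1+|u|)^{-(d+\alpha)}du$, a finite multiple of $|w|^{-\beta}$ because $d+\alpha>d$. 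On the second region, $|w-z|>|w|/2$ gives $1+|w-z|\geq \tfrac12(1+|w|)$, hence $(1+|w-z|)^{-(d+\alpha)}\leq 2^{d+\alpha}(1+|w|)^{-(d+\alpha)}$; combined with $\int_{\{|z|<|w|/2\}}|z|^{-\beta}dz = C|w|^{d-\beta}$ (finite since $\beta<d$, which holds because $\beta<\alpha<d$), the contribution is at most $C|w|^{d-\beta}(1+|w|)^{-(d+\alpha)} = C|w|^{-\beta}\,\tfrac{|w|^d}{(1+|w|)^{d+\alpha}} \leq C|w|^{-\beta}(1+|w|)^{-\alpha} \leq C|w|^{-\beta}$. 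Adding the two contributions yields the claim.

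I do not expect a genuine obstacle here: this is the standard fact that convolving a rapidly decaying kernel against a Riesz kernel $|\cdot|^{-\beta}$ reproduces $|\cdot|^{-\beta}$, and all constants depend only on $d$, $\beta$, $\alpha$ and $K_\alpha$. The only mild care needed is to verify that the comparison of $1+|w-z|$ with $1+|w|$ on the inner region is uniform and that the bound holds for all $w\neq 0$, including small $|w|$; there the left-hand side stays bounded while $|w|^{-\beta}\to\infty$, so the inequality is automatic, and in fact the splitting argument above already gives the estimate directly for every $w\neq 0$ without separating the cases of small and large $|w|$.
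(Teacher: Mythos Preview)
Your proof is correct and follows essentially the same strategy as the paper: a dyadic splitting of the integration domain into $\{|z|<|w|/2\}$ and $\{|z|\ge |w|/2\}$, using the pointwise bound \eqref{29m-1}/\eqref{Eq1} on the kernel and the integrability of $|z|^{-\beta}$ near the origin. The only cosmetic difference is that you first reduce to $t=1$ via scaling, whereas the paper carries the parameter $t$ throughout and absorbs it at the end by noting $\sup_{x,t} t^{-d/\alpha}|x|^d(1+|t^{-1/\alpha}x|^{\alpha+d})^{-1}<\infty$; the two are equivalent.
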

\begin{proof}
Using the estimate \eqref{Eq1}, we have
\begin{align*}
&\int_{\mathbb{R}^d}  G_{{\alpha} } (t, x-y) \vert y\vert ^{-\beta} dy \\
&  \le C \int_{\R^d} \frac  { t^{- \frac d \alpha}  } { 1+  | (x-y) t^{-\frac 1 \alpha} |^{\alpha+d} }  | y| ^{-\beta} dy \\
&=  C \int_{|y| < \frac { |x|}2  } \frac  { t^{- \frac d \alpha}  } { 1+  | (x-y) t^{-\frac 1 \alpha} |^{\alpha+d} }  | y| ^{-\beta} dy 
+ C \int_{|y| \ge \frac { |x|} 2} \frac  { t^{- \frac d \alpha}  } { 1+  | (x-y) t^{-\frac 1 \alpha} |^{\alpha+d} }  | y| ^{-\beta} dy   \\
& \le   C \int_{|y| < \frac { |x|}2  } \frac  { t^{- \frac d \alpha}  } { 1+  | x   t^{-\frac 1 \alpha} |^{\alpha+d} }  | y| ^{-\beta} dy 
+ C | x| ^{-\beta}   \int_{|y| \ge \frac { |x|} 2} \frac  { t^{- \frac d \alpha}  } { 1+  | (x-y) t^{-\frac 1 \alpha} |^{\alpha+d} }  dy \\
& \le   C  \frac  { t^{- \frac d \alpha}  } { 1+  | x   t^{-\frac 1 \alpha} |^{\alpha+d} }  | x| ^{-\beta+d} 
+ C | x| ^{-\beta} .
\end{align*}
The estimate (\ref{2s-2}) follows from this, because one can show that
\[
\sup_{x\in \R^d} \sup_{t>0} \frac  { t^{- \frac d \alpha}  |x|^d } { 1+  | x   t^{-\frac 1 \alpha} |^{\alpha+d} } <\infty.
\]
\end{proof}
\begin{proof}[Proof of Theorem \ref{t1.2}]
Let $\varphi_R$ be given by \eqref{not1}. By the same arguments as in the proof of \cite[Theorem 1.1]{HNVZ19}, 
using Proposition \ref{lem:dist},  Theorem \ref{pp2} and  Proposition \ref{pp11},
we get $d_{TV}(F_R,Z)\le  2(A_1+A_2)$, where  
\begin{eqnarray}
A_1&\leq & CR^{\beta-2d}\int_0^t\left(\int_{0}^s   (s-r) ^ {-\kappa}\int_{\mathbb{R}^{6d}} \varphi_{R} (t-s, y)\varphi_{R} (t-s,  y')\varphi_{R} (t-s, \tilde  y)
\right. \nonumber\\
&& \times  \varphi_{R} (t-s, \tilde  y') 
  G^{\frac 1{2q}}_{ \alpha } (s-r,  y- z)  G^{\frac 1{2q}}_{ \alpha } (s-r,\tilde{  y}- z') \gamma(y-y')\nonumber \\
&&\left.\times \gamma(\tilde  y-\tilde y')\gamma( z- z')d yd y'd\tilde yd\tilde  y'd zd z'dr \right)^{1/2}ds \label{eq:A1-bound}
\end{eqnarray}
and
\begin{eqnarray*}
A_{2} &\leq & C R^{\beta-2d}\int_{0} ^{t}  \bigg( \int_{s}^{t}  (r-s) ^{-\kappa}\int_{ \mathbb{R}^{6d}} \varphi_{R} (t-r,z) \varphi_{R} (t-r, \tilde{z}) \varphi_{R} (t-s,y')  \\
&&\times \varphi_{R} (t-s, \tilde{y} ') G^{\frac 1{2q}}_{ \alpha} (r-s, z- y)  G^{\frac 1{2q}}_{ \alpha} (r-s,\tilde{z}- \tilde{y} )  \\
&&\times \gamma( y- y')\gamma(\tilde  y-\tilde  y')\gamma(  z-\tilde  z)d y d y'd\tilde  yd\tilde  y'd zd\tilde  zdr\bigg)^{1/2}ds. 
\end{eqnarray*}
We begin with the case $\beta=d$ that is simpler. For the term $A_1$ in this case, we use the trivial bound $\varphi_{R} (t-s,  y')\varphi_{R} (t-s, \tilde  y)\varphi_{R} (t-s, \tilde  y') \leq 1$, integrate in the variables $y'$ and $\tilde{y}'$, perform the  change of variables $y\mapsto y-z$ and $\tilde{y} \mapsto \tilde{y}-z$ in the integrals with respect to $y,\tilde{y}$, and then integrate with respect to $z'$, $z$, and finally with respect to $y$ and $\tilde{y}'$. Together with Lemma \ref{lma:2q-integral}, this leads to 
\begin{eqnarray*}
A_1&\leq & CR^{-d}\int_0^t\left(\int_{0}^s   (s-r) ^ {-\kappa}\int_{\mathbb{R}^{4d}} \varphi_{R} (t-s, y)G^{\frac{1}{2q}}_{ \alpha } (s-r,  y- z)  \right. \nonumber\\
&& \left.\times  G^{\frac{1}{2q}}_{ \alpha } (s-r,\tilde{  y}- z') \gamma( z- z')d yd\tilde yd zd z'dr \right)^{1/2}ds\nonumber \\
&= & CR^{-d}\int_0^t\left(\int_{0}^s   (s-r) ^ {-\kappa}\int_{\mathbb{R}^{4d}} \varphi_{R} (t-s, y+z)G^{\frac{1}{2q}}_{ \alpha } (s-r,  y)  \right. \nonumber\\
&& \left.\times  G^{\frac{1}{2q}}_{ \alpha } (s-r,\tilde{  y}) \gamma( z- z')d zd z'd yd\tilde ydr \right)^{1/2}ds\nonumber \\
&\leq &CR^{-\frac{d}{2}}\int_0^t\left(\int_{0}^s   (s-r) ^ {-\kappa}\int_{\mathbb{R}^{4d}} G^{\frac{1}{2q}}_{ \alpha } (s-r,  y) G^{\frac{1}{2q}}_{ \alpha } (s-r,\tilde{  y})d yd\tilde ydr \right)^{1/2}ds\nonumber \\
&\leq & CR^{-\frac{d}{2}}.
\end{eqnarray*}
Treating the term $A_2$ with similar arguments completes the proof for the case $\beta=d$. Suppose next $\beta<d$ and let us again first treat the term $A_1$. We can bound $A_1$ as  follows
\begin{align*}
A_1 & \le
C R^{\beta -2d} \int_0^t  \Bigg( \int_0^s (s-r)^{-\kappa}  
\int_{B_R^4} \int_{\R^{6d}} G_\alpha(t-s, x_1-y) G_\alpha(t-s, x_2-y') \\
& \qquad \times G_\alpha(t-s, x_3- \tilde{y})G_\alpha(t-s, x_4-\tilde{y}')  G^{\frac 1{2q}}_{ \alpha} (r-s, z- y)  G^{\frac 1{2q}}_{ \alpha} (r-s,\tilde{z}- \tilde{y} ) \\
& \qquad \times \vert  y- y'-w_1\vert^{- \beta}\vert \tilde  y-\tilde  y'-w_2\vert^{- \beta}\vert  z-\tilde  z-w_3\vert^{- \beta}\\
& \qquad \times d y d y'd\tilde  yd\tilde  y'd zd\tilde  z d\mu(w_1)d\mu(w_2)d\mu(w_3)dr\bigg)^{1/2}ds.
\end{align*}
The change of variables $x_1-y=\theta_1$, $ x_2-y' =\theta_2$, $x_3 -\tilde{y} =\theta_3$, $x_4-\tilde{y}' =\theta_4$, $z-y= \eta_1$ and $\tilde{z} - \tilde{y} = \eta_2$,  yields
\begin{align*}
A_1 & \le
C R^{\beta -2d} \int_0^t  \Bigg( \int_0^s (s-r)^{-\kappa}  
\int_{B_R^4} \int_{\R^{6d}} G_\alpha(t-s, \theta_1) G_\alpha(t-s, \theta_2) \\
& \qquad \times G_\alpha(t-s, \theta_3)G_\alpha(t-s, \theta_4) G^{\frac 1{2q}}_{ \alpha} (r-s, \eta_1)  G^{\frac 1{2q}}_{ \alpha} (r-s, \eta_2 ) \\
& \qquad \times \vert  x_1-x_2+\theta_2-\theta_1-w_1\vert^{- \beta}\vert  x_3 -x_4 +\theta_4-\theta_3-w_2\vert^{- \beta}\\
& \qquad \times \vert x_1-x_3 -\theta_1+\theta_4 +\eta_1-\eta_2-w_3\vert^{- \beta}\\
&\qquad \times d \theta_1 d \theta_2 d\theta_3   d\theta_4 d\eta_1d\eta_2d\mu(w_1)d\mu(w_2)d\mu(w_3)dr\bigg)^{1/2}ds.
\end{align*}
Integrating in the variables $\theta_2$ and $\theta_3$ and using the estimate (\ref{2s-2}), we can write
\begin{align*}
A_1 & \le
C R^{\beta -2d} \int_0^t  \Bigg( \int_0^s (s-r)^{-\kappa}  
\int_{B_R^4} \int_{\R^{4d}} G_\alpha(t-s, \theta_1)   \\
& \qquad \times  G_\alpha(t-s, \theta_4) G^{\frac 1{2q}}_{ \alpha} (r-s, \eta_1)  G^{\frac 1{2q}}_{ \alpha} (r-s, \eta_2 ) \\
& \qquad \times \vert  x_1-x_2-\theta_1-w_1\vert^{- \beta}\vert  x_3 -x_4 +\theta_4-w_2\vert^{- \beta}\\
& \qquad \times \vert x_1-x_3 -\theta_1+\theta_4 +\eta_1-\eta_2-w_3\vert^{- \beta}\\
& \qquad \times d \theta_1   d\theta_4 d\eta_1d\eta_2d\mu(w_1)d\mu(w_2)d\mu(w_3)dr\bigg)^{1/2}ds.
\end{align*}
The change of variables $x_i =R \xi_i $, $i=1,2,3,4$ yields
\begin{align*}
A_1 & \le
C R^{-\beta/2} \int_0^t  \Bigg( \int_0^s (s-r)^{-\kappa}  
\int_{B_1^4} \int_{\R^{4d}} G_\alpha(t-s, \theta_1)   \\
& \qquad \times  G_\alpha(t-s, \theta_4) G^{\frac 1{2q}}_{ \alpha} (r-s, \eta_1)  G^{\frac 1{2q}}_{ \alpha} (r-s, \eta_2 ) \\
& \qquad \times \vert  x_1-x_2-R^{-1}[\theta_1-w_1]\vert^{- \beta}\vert  x_3 -x_4 +R^{-1}[\theta_4-w_2]\vert^{- \beta}\\
& \qquad \times \vert x_1-x_3 +R^{-1} [-\theta_1+\theta_4 +\eta_1-\eta_2-w_3]\vert^{- \beta}\\
&\qquad \times d \theta_1   d\theta_4 d\eta_1d\eta_2d\mu(w_1)d\mu(w_2)d\mu(w_3)dr\bigg)^{1/2}ds.
\end{align*}
Taking into account that
\[
\sup_{z\in \R^d} \int_{B_1} |x +z|^{-\beta} dx <\infty,
\]
and that, by Lemma \ref{lma:2q-integral},
\[
\int_{\R^d} G^{\frac 1{2q}}_{ \alpha} (r-s, \eta) d\eta = C (r-s) ^{\frac{\kappa}{2}},
\]
we conclude that
$$
A_1 \leq CR^{-\beta/2}.
$$
Treating the term $A_2$ similarly verifies the case $\beta<d$ as well, completing the whole proof.
\end{proof}

\subsection{Proof of Theorem \ref{t1.4}}
\label{subsec:fclt}
In order to prove \ref{t1.4} it suffices to prove tightness and the convergence of the finite dimensional distributions. For the latter we can proceed as in \cite{HNVZ19} together with the arguments of the proof of Theorem \ref{t1.2}. The tightness in ensured by the following result and Kolmogorov's criterion.

\begin{proposition}
\label{pro:tightness2}
Let $u(t,x)$ be the solution to \eqref{1}. Then for any $0\leq s < t\leq T$ and any $p\geq 1$ there exists a constant $C=C(p,T)$ such that 
$$
\E \left(\left|\int_{B_R} u(t,x)dx - \int_{B_R}u(s,x)dx\right|^p \right) \leq CR^{\left(d-\frac{\beta}{2}\right)p}(t-s)^{\frac{p}{2}}.
$$
\end{proposition}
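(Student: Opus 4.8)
The plan is to use the mild formulation together with the Burkholder--Davis--Gundy inequality (\ref{burk2}) and the bound for the $p$-norm of the Malliavin derivative, just as in the proof of Theorem \ref{t1.2}. First, write the increment using (\ref{mild}) and (\ref{not1}) as
\[
\int_{B_R} u(t,x)dx - \int_{B_R}u(s,x)dx = \int_0^t \int_{\RR^d}\big[\varphi_R(t-r,y) - \mathbf{1}_{[0,s]}(r)\varphi_R(s-r,y)\big]\sigma(u(r,y))W(dr,dy),
\]
and split the right-hand side into the contribution of $[s,t]$, call it $A_1$, where the integrand is $\varphi_R(t-r,y)\sigma(u(r,y))$, and the contribution of $[0,s]$, call it $A_2$, where the integrand is $[\varphi_R(t-r,y)-\varphi_R(s-r,y)]\sigma(u(r,y))$. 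By the triangle inequality it suffices to bound $\E|A_1|^p$ and $\E|A_2|^p$ separately.

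For $A_1$, apply (\ref{burk2}) and the Lipschitz growth of $\sigma$ together with (\ref{22i-needed}) to reduce to estimating
\[
R^{\left(\frac\beta2 - d\right)p}\left(\int_s^t \int_{\RR^{2d}} \varphi_R(t-r,y)\varphi_R(t-r,y')\gamma(y-y')\,dy'dy\,dr\right)^{\frac p2}.
\]
By the Fourier computation in the proof of Lemma \ref{pro:correct-limit} (replacing $\int_0^t$ by $\int_s^t$), the inner double integral equals, up to constants, $R^{2d-\beta}\int_s^t\int_{\RR^d}|\xi|^{\beta-2d}J_{d/2}^2(|\xi|)e^{-2(t-r)R^{-\alpha}|\xi|^\alpha}\widehat\mu(\xi/R)\,d\xi\,dr$, which is bounded by $CR^{2d-\beta}(t-s)$ since the $\xi$-integral is finite and $\widehat\mu$, $e^{-(\cdot)}$ are bounded. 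This yields $\E|A_1|^p \le CR^{(d-\beta/2)p}(t-s)^{p/2}$.

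For $A_2$, the same reduction gives that it suffices to control
\[
R^{\left(\frac\beta2-d\right)p}\left(\int_0^s \int_{\RR^{2d}} \psi_R(r,y)\psi_R(r,y')\gamma(y-y')\,dy'dy\,dr\right)^{\frac p2},
\quad \psi_R(r,y):=\varphi_R(t-r,y)-\varphi_R(s-r,y).
\]
Taking Fourier transforms in $y$ as before, $\widehat{\psi_R}(r,\xi) = \widehat{\mathbf{1}_{B_R}}(\xi)\big(e^{-(t-r)|\xi|^\alpha}-e^{-(s-r)|\xi|^\alpha}\big)$, so $|\widehat{\psi_R}(r,\xi)|^2 = (2\pi R)^d|\xi|^{-d}J_{d/2}^2(R|\xi|)e^{-2(s-r)|\xi|^\alpha}\big(1-e^{-(t-s)|\xi|^\alpha}\big)^2$. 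The integrand then carries the extra factor $\big(1-e^{-(t-s)|\xi|^\alpha}\big)^2$; after the scaling $\xi\mapsto \xi/R$ this factor is $\big(1-e^{-(t-s)R^{-\alpha}|\xi|^\alpha}\big)^2 \le \min(1, (t-s)R^{-\alpha}|\xi|^\alpha)^2 \le \big((t-s)R^{-\alpha}|\xi|^\alpha\big)$ on a suitable range, but the cleaner route is the elementary bound $1-e^{-x}\le x^{1/2}$ for $x\ge 0$, giving $\big(1-e^{-(t-s)R^{-\alpha}|\xi|^\alpha}\big)^2 \le (t-s)R^{-\alpha}|\xi|^\alpha$. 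Thus the inner integral is bounded by $CR^{2d-\beta}\int_0^s\int_{\RR^d}|\xi|^{\beta-2d+\alpha}J_{d/2}^2(|\xi|)R^{-\alpha}(t-s)\widehat\mu(\xi/R)\,d\xi\,dr$; one must check that $\int_{\RR^d}|\xi|^{\beta-2d+\alpha}J_{d/2}^2(|\xi|)\,d\xi<\infty$, which holds since near the origin the integrand behaves like $|\xi|^{\beta-d+\alpha}$ (integrable as $\beta+\alpha>0$) and at infinity like $|\xi|^{\beta-2d+\alpha+1}$, integrable because $\beta\le d$, $\alpha\le 2$, and $\beta-d+\alpha < d$ precisely when $\beta+\alpha < 2d$, which holds here since $\alpha\le 2\le 2d$ unless $d=1$; in the borderline low-dimensional cases one instead interpolates $1-e^{-x}\le x^\gamma$ with $\gamma<1$ close enough to $1$ so that the $\xi$-integral converges while still extracting a positive power of $R^{-1}$ and $(t-s)$ (matching the requested exponent $p/2$ in $(t-s)$ only requires $\gamma \ge 1/2$, and any $\gamma\in[1/2,1)$ costing $R^{-\alpha\gamma}(t-s)^\gamma \le C(t-s)^{1/2}R^{-\alpha\gamma}$ suffices since the extra negative power of $R$ is harmless). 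Combining, $\E|A_2|^p \le CR^{(d-\beta/2)p}(t-s)^{p/2}$.

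The main obstacle is the $A_2$ estimate: one needs the time-increment regularity $(t-s)^{p/2}$ while simultaneously keeping the spatial integral $\int_{\RR^d}|\xi|^{\beta-2d}J_{d/2}^2(|\xi|)(\cdot)\,d\xi$ convergent, and the two requirements pull against each other through the exponent of $|\xi|$ produced by the bound $1-e^{-x}\le x^\gamma$. Choosing $\gamma$ correctly (and verifying the resulting $\xi$-integral is finite under Assumption \ref{assumption:main}, using $J_{d/2}^2(|\xi|)=O(|\xi|)$ at infinity and $J_{d/2}^2(|\xi|)\sim c_d|\xi|^d$ at $0$) is the delicate point; everything else is a routine repetition of the Fourier-analytic computations already carried out in Lemma \ref{pro:correct-limit} and the proof of Theorem \ref{t1.2}. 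Finally, Proposition \ref{pro:tightness2} combined with Kolmogorov's continuity criterion gives tightness in $C([0,T])$, which together with the convergence of finite-dimensional distributions (obtained exactly as in \cite{HNVZ19} using Theorem \ref{pp2} and the multivariate version of Proposition \ref{lem:dist}) completes the proof of Theorem \ref{t1.4}.
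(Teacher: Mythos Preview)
Your overall approach—BDG plus Fourier analysis of $\varphi_R$—is exactly the paper's, and your treatment of $A_1$ is correct. The gap is in the $A_2$ estimate, and your interpolation fix with $1-e^{-x}\le x^\gamma$, $\gamma\in[1/2,1)$, does not close it in all cases of Assumption \ref{assumption:main}.

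The issue is that you discard the factor $e^{-2(s-r)|\xi|^\alpha}$ (bounding it by $1$) before integrating in $r$. After your bound $(1-e^{-(t-s)|\xi|^\alpha})^2 \le (t-s)|\xi|^\alpha$ you are left needing $\int_{\R^d}|\xi|^{\beta-2d+\alpha}J^2_{d/2}(|\xi|)\,d\xi<\infty$, which at infinity requires $\alpha+\beta<d+1$. In Case~(ii) ($\beta=d=1$, $\alpha>1$) this reads $\alpha<1$, a contradiction; in Case~(i) with, say, $d=2$, $\alpha=2$, $\beta=3/2$ it also fails. Your interpolation needs $\gamma\ge 1/2$ to recover $(t-s)^{p/2}$ and $\gamma<\tfrac{d+1-\beta}{2\alpha}$ for the $\xi$-integral to converge; these two constraints are incompatible precisely when $\alpha+\beta\ge d+1$, so the ``delicate point'' you flag is in fact an obstruction.

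The paper's route is much simpler and avoids the difficulty entirely: keep the factor $e^{-2(s-r)|\xi|^\alpha}$, bound $(1-e^{-(t-s)|\xi|^\alpha})^2\le (t-s)|\xi|^\alpha$, and integrate in $r$ \emph{first}:
\[
\int_0^s e^{-2(s-r)|\xi|^\alpha}\,|\xi|^\alpha\,dr=\tfrac12\bigl(1-e^{-2s|\xi|^\alpha}\bigr)\le \tfrac12.
\]
The extra $|\xi|^\alpha$ disappears and one is left with $C(t-s)\int_{\R^d}R^d|\xi|^{-d}J^2_{d/2}(R|\xi|)\,\widehat\gamma(\xi)\,d\xi\le C(t-s)R^{2d-\beta}$, exactly as for $A_1$—no interpolation parameter, no dimensional restriction. (As a minor aside: the Malliavin derivative bound from Proposition~\ref{prop:malliavin-bound} plays no role here; only $\sup_{r,y}\|\sigma(u(r,y))\|_p<\infty$ is needed, which follows from \eqref{22i-needed}.)
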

\begin{proof}
Let $ \Theta_{x,t,s}$ be given by 
$$
\Theta_{x,t,s}(r,y) = G_{\alpha}(t-r,x-y)\1_{\{r\leq t\}} - G_{\alpha}(s-r,x-y)\1_{\{r\leq s\}}.
$$
We have, for $0<s<t<T$,
\begin{equation*}
\int_{B_R}u(t,x)dx - \int_{B_R}u(s,x)dx=
\int_0^T  \int_{\R^d} \int_{B_R} \Theta_{x,t,s}(r,y) \sigma(u(r,y))dxW(dr,dy).
\end{equation*}
Now Burkholder inequality implies that, for every $p\geq 1$, 
\begin{align*}
&\E\left(\left|\int_{B_R} u(t,x)dx - \int_{B_R} u(s,x)dx\right|^p \right)\\
\leq & C_{p,T}\left(\int_0^T \int_{\R^{2d}}    \left(\int_{B_R^2}   \Theta_{x,t,s}(r,y) \Theta_{x',t,s}(r,y')dx'dx\right)\gamma(y-y') dy dy'dr \right)^{\frac{p}{2}}.
\end{align*}
Hence it remains to show that
\begin{eqnarray} 
K_{R}(t,s)&:=&\int_0^T \int_{\R^{2d}}     \left(\int_{B_R^2}  \Theta_{x,t,s}(r,y) \Theta_{x',t,s}(r,y')dx'dx\right) \gamma(y-y')dy dy'dr \nonumber\\
&\leq & C R^{2d-\beta} (t-s)\label{8a-4}.
\end{eqnarray}
By taking the Fourier transform, we obtain 
$K_{R}(t,s)\leq C (I_{1}+ I_{2}),$
where
$$I_{1}=  \int_0^s \int_{\RR^d} R^d|\xi|^{-d}J_{\frac{d}{2}}^2(R|\xi|)\left|e ^{-(t-r)\vert \xi \vert ^{\alpha}} - e ^{-(s-r)\vert \xi \vert ^{\alpha}}\right|^2 \widehat{\gamma}(\xi)d\xi dr$$
and
$$I_{2}= \int_s^t \int_{\RR^d} R^d|\xi|^{-d}J_{\frac{d}{2}}^2(R|\xi|)e ^{-2(t-r)\vert \xi \vert ^{\alpha}} \widehat{\gamma}(\xi)d\xi dr.$$
Using $e ^{-2(t-r)\vert \xi \vert ^{\alpha}}\leq 1$ and
$$
\left|e ^{-(t-r)\vert \xi \vert ^{\alpha}} - e ^{-(s-r)\vert \xi \vert ^{\alpha}}\right|^2 \leq C(t-s)
$$
leads to
\begin{eqnarray*}
I_1 + I_2 &\leq &C(t-s) \int_{\RR^d} R^d|\xi|^{-d}J_{\frac{d}{2}}^2(R|\xi|) \widehat{\gamma}(\xi)d\xi \\
& = & C(t-s)R^{2d-\beta}\int_{\RR^d} |\xi|^{\beta-2d}J_{\frac{d}{2}}^2(|\xi|) d\xi.
\end{eqnarray*}
This concludes the proof.
\end{proof}

Theorem \ref{t1.4} follows by the arguments of the proof of \cite[Theorem 1.3]{HNVZ19} together with Proposition \ref{pro:tightness2}. Details that, despite being rather lengthy, are directly based on the same arguments that we have used above, and for this reason they are left to the reader.

\section{Appendix: proof of Lemma \ref{ll2}}

\begin{proof}[Proof of Lemma \ref{ll2}]
As in \cite{CH1} (see the proofs of Lemmas 2.4 and 3.1), it suffices to prove the bound (\ref{20i-2}) in the case when (\ref{20i-1}) is an equality.  Let $g_{n}, n\geq 0$ be a sequence defined iteratively by setting $g_{0} (t,x)= G_{\alpha} (t, x)$, and for $n\geq 0$ 
\begin{eqnarray*}
g_{n+1}(t, x) ^{2}&=& G_{\alpha} (t,x) ^ {2} + \int_{0} ^ {t} \int_{\mathbb{R}^{2d}  }     G_{\alpha }(t-s, x-y )  G_{\alpha }(t-s, x-y' )\\
&&\times g_{n}(s, y) g_{n}(s, y' )  \gamma(y-y')dy'dyds .
\end{eqnarray*}
Denote $\kappa = \frac{2d}{\alpha} - \frac{d}{q\alpha}$. We prove by induction that for every $n\geq 0$,
\begin{equation}\label{20i-3}
g_{n}(t, x)^ {2}  \leq C\sum_{j=0}^ {n} \frac{ \Gamma^{j}(1-\kappa)}{\Gamma \big(  (j+1)(1-\kappa)\big)}  t ^{ j(1-\kappa) -\kappa}G^{\frac{1}{q}} _{\alpha} (t, x).
\end{equation}
For $n=0$, taking into account that  $\alpha +d \ge \frac { \alpha+ d}{2q}$, $\frac{\kappa}{2} = \frac{d}{\alpha}- \frac{d}{2q\alpha}$, and $2q>1$, we can use the estimate \eqref{Eq1},
\begin{eqnarray}  \notag
g_{0} (t, x)&= & G  _{\alpha } (t, x)  \leq C  \frac { t^{-\frac d\alpha} }  { (1+ | t^{-\frac 1\alpha} x|)^{ \alpha+d}} \\ \notag
&\le &C  \frac { t^{-\frac d\alpha}} { (1+ | t^{-\frac 1\alpha} x|)^{ (\alpha+d)/2q   }}\\  \label{ECU2}
&\le &C 
 t ^{-\frac{\kappa}{2}}  G _{\alpha}^{\frac 1{2q}}  (t, x).
\end{eqnarray}
Hence (\ref{20i-3}) is true for $n=0$.  

Suppose that (\ref{20i-3}) holds for $n$.
Denoting $c_j = \frac{ \Gamma^{j}(1-\kappa)}{\Gamma \big(  (j+1)(1-\kappa)\big)} $ and by the induction hypothesis, 
 \begin{eqnarray} \notag
g_{n+1}(t, x) ^ {2}&\leq& G_{\alpha } (t,x) ^ {2} + \int_{0} ^ {t}  \int_{\mathbb{R}^{2d} }   G_{\alpha }(t-s, x-y ) 
G_{\alpha }(t-s, x-y' )\\  \notag
&&\times   \sum_{j=0}^n  c_j s^{ j(1-\kappa) -\kappa} G_{\alpha}^{\frac 1{2q}}(s, y)G_{\alpha}^{\frac 1{2q}}(s, y')
\gamma(y-y')dy'dyds\\  \label{ECU3}
&=:& G_{\alpha } (t,x) ^ {2} +\sum_{j=0}^n  c_j  I_j.
\end{eqnarray}
The inequality \eqref{eq:key-convolution-bound} with $g(y)=f(y)= G_{\alpha}(t-s,x-y)G_{\alpha}^{\frac 1{2q}}(s, y)$ allows us to estimate
\[
I_j  \le
 C  \int_0^t s^ {j(1 -\kappa)-\kappa}  \left(\int_{\R^d}  G^{2q} _{\alpha }(t-s, x-y ) G_{\alpha} (s, y) dy \right)^{\frac 1q}ds.
\]
The scaling and asymptotic properties of the kernel   $G _{\alpha }$ imply that
\[
 G^{2q} _{\alpha }(t-s, x-y ) \le C
 \frac { (t-s)^{-\frac { 2qd} \alpha}}   { (1+ |(t-s)^{-\frac 1 \alpha} (x-y)|) ^{2q(\alpha+d)}}.
  \]
 Taking into account that  $(\alpha +d)2q \ge \alpha +d$, we obtain
 \begin{align*}
  G^{2q} _{\alpha }(t-s, x-y )& \le C
 \frac { (t-s)^{-\frac { 2qd} \alpha}}   { (1+ |(t-s)^{-\frac 1 \alpha} (x-y)|) ^{\alpha+d}} \\
 & \le  C  (t-s)^{- \kappa q}  G_{\alpha}( t-s, x-y).
 \end{align*}
 Therefore, by the semigroup property
 \begin{align}  \notag
 I_j & \le C \int_0^t s^{ j(1 -\kappa)-\kappa}  (t-s) ^{-\kappa}  G_{\alpha}^{\frac 1q}(t-s + s , x) ds \\ \notag
 & = C G_{\alpha} ^{\frac 1q}( t , x)  \int_0^t s^{ j(1 -\kappa)-\kappa}  (t-s) ^{-\kappa} ds  \\  \label{ECU1}
 &= C G_{\alpha} ^{\frac 1q}( t , x)  t^{ ( j+1)(1 -\kappa  ) -\kappa} 
 \frac { \Gamma(1-\kappa) \Gamma( (j+1) (1-\kappa ))}{\Gamma( (j+2)(1-\kappa) )}.
 \end{align}
 Substituting  (\ref{ECU1}) and  (\ref{ECU2}) into (\ref{ECU3}) yields
 \begin{align*}
 g_{n+1} (t,x)^2 & \le C  t^{-\kappa} G^{\frac 1q}_{\alpha} (t,x) \\
 & \qquad + C  \sum_{j=0}^n c_j   G_{\alpha} ^{\frac 1q}( t , x)  t^{ ( j+1)(1 -\kappa  ) -\kappa} 
 \frac { \Gamma(1-\kappa) \Gamma( (j+1) (1-\kappa ))}{\Gamma( (j+2)(1-\kappa) )}\\
 &= C  G_{\alpha} ^{\frac 1q}( t , x) 
   \sum_{j=0}^{n+1} c_{j-1}    t^{ j(1 -\kappa  ) -\kappa} 
 \frac { \Gamma(1-\kappa) \Gamma( j (1-\kappa ))}{\Gamma( (j+1)(1-\kappa) )} \\
 &= C  G_{\alpha} ^{\frac 1q}( t , x) 
   \sum_{j=0}^{n+1}     t^{ j(1 -\kappa  ) -\kappa} 
 \frac { \Gamma^j(1-\kappa)  }{\Gamma( (j+1)(1-\kappa) )} .
 \end{align*}

Finally, it follows from (\ref{20i-3}) 
\begin{eqnarray*}
g(t,x)&=& \lim _{n\to \infty} g_{n}(t,x) \leq C \left( \sum_{j=0}^ {\infty} \frac{\ \Gamma^{j}(1-\kappa )}{\Gamma \big(   (j+1)(1-\kappa)\big)} t ^ {j(1-\kappa)- \kappa}G _{\alpha } (t, x) \right)^{1/2}\\
&\leq & C t^ {-\frac \kappa{2}}  G^{\frac 1{2q}}_{\alpha} (t, x).
\end{eqnarray*}
This finishes the proof.
\end{proof}

\end{document}